\RequirePackage[OT1]{fontenc}
\documentclass[12pt,draftcls,onecolumn]{IEEEtran}

\usepackage{subfigure}
\usepackage{amssymb, amsmath, amsfonts}
\usepackage{empheq}
\usepackage{stfloats}
\usepackage{caption, cite}
\usepackage{graphicx}

\usepackage{algorithm}
\usepackage{algorithmic}

\usepackage{amsthm}
\usepackage{tikz}
\usepackage{amsmath}
\allowdisplaybreaks
\usetikzlibrary{shadows,arrows,positioning}
\pgfdeclarelayer{background}
\pgfdeclarelayer{foreground}
\pgfsetlayers{background,main,foreground}

\usetikzlibrary{external}
\usepackage{pgfplots}
\usepackage{cases}

\newtheorem{proposition}{Proposition}
\newtheorem{theorem}{Theorem}
\newtheorem{definition}{Definition}
\newtheorem{lemma}{Lemma}

\newtheorem{corollary}{Corollary}
\newtheorem{remark}{Remark}
\newtheorem{assumption}{Assumption}

\newlength\figureheight
\newlength\figurewidth


\DeclareFontFamily{OT1}{pzc}{}
\DeclareFontShape{OT1}{pzc}{m}{it}{<-> s * [1.000] pzcmi7t}{}
\DeclareMathAlphabet{\mathpzc}{OT1}{pzc}{m}{it}

\newcommand{\R}{{\mathbb{R}}}
\newcommand{\E}{{\mathbb{E}}}

\newcommand{\vv}{{\mathbf{v}}}

\newcommand{\bv}{{\mathbf{\bar{v}}}}
\newcommand{\x}{{\mathbf{x}}}
\newcommand{\g}{{\mathbf{g}}}
\newcommand{\bg}{{\mathbf{\bar{g}}}}
\newcommand{\bx}{{\mathbf{\bar{x}}}}
\newcommand{\hx}{{\mathbf{\hat{x}}}}
\newcommand{\sxk}{{\mathbf{\sigma}_{x,k}}}
\newcommand{\y}{{\mathbf{y}}}
\newcommand{\hy}{{\mathbf{\hat{y}}}}
\newcommand{\syk}{{\mathbf{\sigma}_{y,k}}}
\newcommand{\by}{{\mathbf{\bar{\y}}}}
\newcommand{\f}{{\nabla{\mathbf{f}}}}
\newcommand{\bbf}{{\nabla{\mathbf{\bar{f}}}}}
\newcommand{\lf}{{\nabla{f}}}
\newcommand{\oxk}{{\Omega_{x,k}}}
\newcommand{\oyk}{{\Omega_{y,k}}}
\newcommand{\osxk}{{\Omega_{\sigma_{x},k}}}
\newcommand{\osyk}{{\Omega_{\sigma_{y},k}}}
\newcommand{\oo}{{\Omega}}

\newcommand{\lb}{{\bar{\lambda}}}

\newcommand{\llw}{{\rho_{w}}}
\newcommand{\lwi}{{\bar{\lambda}_{W-I}}}
\newcommand{\nxk}{{\mathbf{\xi}_{x,k}}}
\newcommand{\nxkk}{{\mathbf{\xi}_{x,k+1}}}
\newcommand{\nbxk}{{\mathbf{\bar{\xi}}_{x,k}}}

\newcommand{\nyk}{{\mathbf{\xi}_{y,k}}}
\newcommand{\nykk}{{\mathbf{\xi}_{y,k+1}}}
\newcommand{\nvk}{{\mathbf{\xi}_{v,k}}}

\newcommand{\nbyk}{{\mathbf{\bar{\xi}}_{y,k}}}

\newcommand{\bl}{{\mathbf{L}}}
\newcommand{\bi}{{\mathbf{I}}}
\newcommand{\bk}{{\mathbf{K}}}
\newcommand{\bp}{{\mathbf{P}}}
\newcommand{\bh}{{\mathbf{H}}}


\newcommand\addtag{\refstepcounter{equation}\tag{\theequation}}

\makeatletter

\newcommand{\Rmnum}[1]{\expandafter\@slowromancap\romannumeral #1@}
\makeatother

\title{ \hspace*{\fill} \\\hspace*{\fill} \\ \LARGE \bf{Differentially Private and Communication-Efficient Distributed Nonconvex Optimization Algorithms}}

\author{Antai Xie, Xinlei Yi, Xiaofan Wang, Ming Cao, and Xiaoqiang Ren
\thanks{A. Xie, X. Wang, and X. Ren are with the School of Mechatronic Engineering and Automation, Shanghai University, Shanghai, China. Emails: \{xatai,\,xfwang,\,xqren\}@shu.edu.cn.}
\thanks{X. Yi is with the Lab for Information \& Decision Systems, Massachusetts Institute of Technology, Cambridge, MA 02139, USA. Email: xinleiyi@mit.edu.}
\thanks{M. Cao is with the Faculty of Science and Engineering, University of Groningen, Groningen, the Netherlands. Email: m.cao@rug.nl.}
}
\begin{document}
	\maketitle
	 \begin{abstract}
This paper studies the privacy-preserving distributed optimization problem under limited communication, where each agent aims to keep its cost function private while minimizing the sum of all agents’ cost functions. To this end, we propose two differentially private distributed algorithms under compressed communication. We show that the proposed algorithms achieve sublinear convergence for smooth (possibly nonconvex) cost functions and linear convergence when the global cost function additionally satisfies the Polyak–Łojasiewicz condition, even for a general class of compressors with bounded relative compression error. Furthermore, we rigorously prove that the proposed algorithms ensure $\epsilon$-differential privacy. Unlike methods in the literature, the analysis of privacy under the proposed algorithms do not rely on the specific forms of compressors. Simulations are presented to demonstrate the effectiveness of our proposed approach.
	\end{abstract}
	 \begin{IEEEkeywords}
		Distributed nonconvex optimization, linear convergence, compression communication, differential privacy.
	\end{IEEEkeywords}
	
\section{Introduction}
In recent years, distributed optimization in multi-agent systems has emerged as a popular research topic, playing a fundamental role in areas such as resource allocation~\cite{xu2017distributed}, control~\cite{nedic2018distributed}, learning~\cite{li2020distributed}, and estimation~\cite{cattivelli2009diffusion}. In a typical distributed consensus optimization setup, the objective is for a team of agents connected through a network, each associated with a local cost function, to cooperatively minimize the sum of local cost functions. Specifically, consider a network of $n$ agents aiming to solve the following optimization problem:
\begin{align}\label{P1}
	\min_{x\in\mathbb{R}^d}\left\{f(x)=\frac{1}{n}\sum_{i=1}^n f_i(x)\right\},
\end{align}
where $f_i:\mathbb{R}^d\mapsto\mathbb{R}$ is private local cost function belong to agent~$i$ and $x$ is the global decision variable.

Numerous distributed optimization algorithms have been reported to solve the problem~\eqref{P1}, such as distributed (sub)gradient descent~\cite{nedic2009distributed,xu2017convergence,yuan2016convergence}, gradient tracking methods~\cite{qu2017harnessing}, EXTRA~\cite{shi2015extra}, and distributed Newton methods~\cite{varagnolo2015newton,wei2013distributed}. However, these algorithms usually assume that the cost functions $f_i$ are convex. In many applications, such as empirical risk minimization~\cite{bottou2018optimization} and resource allocation~\cite{tychogiorgos2013non}, the cost functions are nonconvex. To address this issue, the authors of~\cite{zeng2018nonconvex,necoara2019linear,wai2017decentralized} proposed several distributed algorithms that allow each agent to achieve the first-order stationary point even in the presence of nonconvex cost functions.
	 
To implement the distributed algorithms, agents need to communicate with each other, which is normally realized by wireless networks. However, wireless networks are vulnerable to malicious attacks, which can result in eavesdropping of the sensitive information transmitted between agents. For instance, in the robot rendezvous problem, the decision variables may contain some private and sensitive location information, as highlighted in~\cite{zhang2018admm}. Moreover, recent research~\cite{zhu2019deep} has shown that adversaries can recover private training data through shared gradients, leading to the risk of exposing confidential information such as medical records and financial transactions. It is therefore essential to promptly and comprehensively address privacy concerns in distributed optimization.

To ensure the privacy of each agent in distributed optimization, various privacy-preserving algorithms have been proposed. Among them, there are two common categories of algorithms. The first category involves adding noise to the transmitted information to confuse attackers. The authors of~\cite{huang2015differentially, zhu2018differentially, ding2021differentially, chen2023differentially} proposed several distributed optimization algorithms making use of the notion of differential privacy~\cite{dwork2008differential}. For instance, Huang \textit{et al.}~\cite{huang2015differentially} proposed differentially private gradient descent method that masks the state by adding Laplace noise. Zhu \textit{et al.}~\cite{zhu2018differentially} extended the above results to time-varying directed networks. However, they only provided the sublinear convergence analysis. To this end, Ding \textit{et al.}~\cite{ding2021differentially } achieved both linear convergence and differential privacy by simultaneously adding noise to states and directions and using constant stepsizes. Notice that, as pointed out in~\cite{ding2021differentially }, it is impossible to achieve differential privacy and accurate convergence simultaneously for the problem~\eqref{P1}. Similar impossibility results can be found in~\cite{huang2024differential} as well. Chen \textit{et al.}~\cite{chen2023differentially} further considered the case of directed graphs. To this end, the authors of~\cite{mo2016privacy,wang2019privacy,he2018preserving,altafini2020system} use correlated noise to avoid the loss of accuracy. Mo and Murray~\cite{mo2016privacy} designed a special time-decaying noise sequence. Wang~\cite{wang2019privacy} proposed a state-decomposition method. The intuition of such approaches is to let the noise sum be zero. However, the level of privacy that can be protected is relatively low due to the correlation between the added noises. The second category of privacy-preserving distributed optimization algorithms involve encryption. For example, Lu and Zhu~\cite{lu2018privacy} proposed a privacy-preserving distributed optimization method using homomorphic encryption. Although encryption-based methods can enable the solutions to converge to the exact optimal, they require a significant amount of computing resources.
	
Most of the aforementioned approaches investigated the privacy-preserving distributed optimization algorithms under the idealized communication network. In practice, it is necessary to consider compressed information due to limited communication bandwidth. Alistarh \textit{et al.}~\cite{alistarh2017qsgd} and Koloskova \textit{et al.}~\cite{koloskova2019decentralized} proposed communication-efficient stochastic gradient descent algorithms by using an unbiased compressor and biased but contractive compressors, respectively. Liao \textit{et al.}~\cite{liao2022compressed} introduced a general class of compressors with bounded relative compression error. They point out that their compressors cover the two types of compressors mentioned above. Kajiyama \textit{et al.}~\cite{kajiyama2020linear} achieved linear convergence by combining the gradient tracking algorithm with a compressor with bounded absolute compression errors, and Xiong \textit{et al.}~\cite{xiong2021quantized} extended the approach in~\cite{kajiyama2020linear} to directed graphs. Additionally, the compressed communication algorithms proposed in~\cite{reisizadeh2019robust,koloskova2019decentralized,taheri2020quantized,yi2022communication} are applicable to nonconvex cost functions.
	 
Due to the advantages of compressed information in saving communication bandwidth, it is natural to consider the marriage between communication compression and privacy preservation. However, there are relatively few related works because of the complex coupling between the compression error and the noise required to achieve privacy. Agarwal \textit{et al.}~\cite{agarwal2018cpsgd} considered a Binomial mechanism and a stochastic quantization in federated learning, which is not suitable for the decentralized scenario with no central servers. Wang and Ba{\c{s}}ar~\cite{wang2022quantization} proposed a differentially private stochastic gradient descent algorithm with compressed communication even for nonconvex cost functions. Both~\cite{agarwal2018cpsgd} and~\cite{wang2022quantization} pointed out that their algorithms can achieve strict $(\epsilon,\delta)$-differential privacy. None of~\cite{agarwal2018cpsgd} and~\cite{wang2022quantization}, however, provided the linear convergence analysis. Besides, their privacy analysis relies on a specific compressor. 
	 
In this paper, we propose compressed, differentially private, distributed, nonconvex optimization algorithms, which preserve differential privacy and achieve 
state-of-the-art linear convergence rates. The main contributions of this work are summarized as follows:
 \begin{enumerate}
	\item For a general class of compressors with bounded relative compression error, we propose a novel nonconvex differentially Private Gradient Tracking algorithm under Compressed communication (PGTC). To guarantee the generality of compressors and preserve privacy, the states will be masked by additional Laplace noises. We show that PGTC converges to a neighborhood of a stationary point with the rate $\mathcal{O}(1/T)$ in general nonconvex settings (Theorem~\ref{theo:convergence0}) and linearly converge to a neighborhood of the global optimum when the global cost function additionally satisfies the Polyak–Łojasiewicz (P--L) condition (Theorem~\ref{theo:convergence}). The size of the neighborhood is determined by the noise added on the gradient. Compared with~\cite{ding2021differentially}, PGTC achieves the same convergence rate even for nonconvex cost functions and bandwidth constrained communication network, and compared with~\cite{agarwal2018cpsgd,wang2022quantization}, we establish the linear convergence rate. 
        \item To further improve communication efficiency, we provide the nonconvex differentially Private Primal-Dual algorithm under Compressed communication  (PPDC), which has similar convergence properties of PGTC (Theorem~\ref{theo:convergence1} and Theorem~\ref{theo:convergence2}). Compared to PGTC, each agent under PPDC only needs to transmit one compressed variable to its neighbors at each time step. Compared with~\cite{yi2022communication}, we further consider the privacy concern. Notice that the noise will be accumulated over time for PGTC and PPDC, which increases the difficulty in analyzing the convergence of the algorithms.
	\item Theoretically, we show that PGTC and PPDC preserve $\epsilon$-differential privacy for the local cost function of each agent even as the time goes to infinity (Proposition~\ref{prop:privacy} and~\ref{propo:privacy1}), but a strict assumption (Assumption~\ref{as:distance1}) is required. Subsequently, we introduce a more general assumption, under which the algorithms can only guarantee privacy for a finite time horizon
 (Theorem~\ref{theo:privacy} and Theorem~\ref{theo:privacy1}). Furthermore, different from~\cite{wang2022quantization,agarwal2018cpsgd}, the privacy under PGTC and PPDC does not rely on some specific compressors but are effective for a general class of compressors.
 \end{enumerate}

The remainder of this paper is organized as follows. In Section~\ref{sec:Problemsetup}, we introduce the preliminaries and formulate the problem. The PGTC algorithm is proposed in Section~\ref{sec:Algorithm}, and its convergence and privacy are then analyzed. Section~\ref{sec:Algorithm1} provides the PPDC algorithm and the corresponding analysis. Some numerical examples are provided in Section~\ref{simulation} to verify the theoretical results. The conclusion and proofs are provided in Section~\ref{conclusion} and Appendix~\ref{app-convergence00}--\ref{app-privacy1}, respectively.
	
\emph{Notations}: $\R$ ($\mathbb{R}_{+}$) is the set of (positive) real numbers. $\mathbb{Z}$ is the set of integers and $\mathbb{N}$ the set of nature numbers. $\mathbb{R}^n$ is the set of $n$ dimensional vectors with real values. The transpose of a matrix $P$ is denoted by $P^\top$, and we use $[P]_{ij}$ to denote the element in its $i$-th row and $j$-th column. The Kronecker product is denoted by~$\otimes$. The $n$-dimensional all-one and all-zero column vectors are denoted by $\mathbf{1}_n$ and $\mathbf{0}_n$, respectively. The $n$-dimensional identity matrix is denoted by $I_n$. We then introduce two stacked vectors: for a vector $\x\in\R^{nd}$, we denote $\bar{x}=\frac{1}{n}(\mathbf{1}_n^\top\otimes I_d)\x$, $\mathbf{\bar{\x}}\triangleq\mathbf{1}_n\otimes\bar{x}$. Further, $\vert\cdot\vert,\Vert\cdot\Vert_1,\Vert\cdot\Vert$ denote the absolute value, $l_1$ norm and $l_2$ norm, respectively. For a matrix $W$ having positive eigenvalues, we use $\bar{\lambda}_W$ and $\underline{\lambda}_W$ to denote its spectral radius and minimum positive eigenvalue respectively. Furthermore, for any square matrix $A$ and vector $x$ with suitable dimension, we denote $\Vert x\Vert_A^2=x^\top Ax$. For a given constant $\theta>0$, $\text{Lap}(\theta)$ is the Laplace distribution with the probability density function $f_L(x,\theta)=\frac{1}{2\theta}e^{-\frac{\vert x\vert}{\theta}}$. For any vector $\xi=[\xi_1,\dots,\xi_d]^\top\in\R^d$, we say that $\xi\sim\text{Lap}_d(\theta)$ if each component $\xi_i\sim\text{Lap}(\theta)$, $i=1,\dots,d$. Furthermore, we use $\E[\cdot]$ and $P[\cdot]$ to denote the expectation of a random variable and the probability of an event, respectively. 

\section{Preliminaries and Problem Formulation} \label{sec:Problemsetup}

\subsection{Standard Assumptions}

For the distributed optimization problem~\eqref{P1}, we consider that each agent $i$ maintains a local estimate $x_{i,k}\in\mathbb{R}^d$ of $x$ at time step $k$ and use $\lf_i(x_{i,k})$ to denote the gradient of $f_i$ at $x_{i,k}$. We make the following assumptions on the local cost functions $f_i$.
\begin{assumption}\label{as:smooth}
	Each local cost function $f_i$ is $L_f$-smooth, for some $L_f>0$, namely for any $x,y\in\mathbb{R}^d$,
	\begin{align}\label{eqn:smooth}
	\left\Vert \lf_i(x)-\lf_i(y)\right\Vert\leq L_f\left\Vert x-y\right\Vert.
	\end{align}
\end{assumption}
From~\eqref{eqn:smooth}, we have
\begin{align}\label{eqn:smooth1}
\vert f_i(y)-f_i(x)-(y-x)^\top\lf_i(x)\vert\leq\frac{L_f}{2}\left\Vert y-x\right\Vert^2.
\end{align}
\begin{assumption}\label{as:finite}
Let $f^*$ be the minimum function value of the problem~\eqref{P1}. We assume $f^*>-\infty$.
\end{assumption}
\begin{assumption}\label{as:PLcondition}
(Polyak–Łojasiewicz (P--L) condition~\cite{yi2022communication}) There exists a constant $\nu>0$ such that for any $x\in\mathbb{R}^d$,
\begin{align}
\frac{1}{2}\left\Vert \lf(x)\right\Vert^2\geq \nu(f(x)-f^*).
\end{align}
\end{assumption}
\begin{remark}
Assumptions~\ref{as:smooth}--\ref{as:finite} are standard in distributed nonconvex optimization, see e.g., \cite{shi2015extra,yang2019survey,yi2022communication}. Furthermore, as point out in~\cite{yi2022communication}, the P--L condition is a weaker assumption than strong convexity and ensures that each stationary point of problem~\eqref{P1} is a global minimizer.
\end{remark}

\subsection{Basics of Graph Theory}
	
The exchange of information between the $n$ agents is captured by an undirected graph $\mathcal{G}(\mathcal{V}, \mathcal{E})$ of $n$ nodes, where $\mathcal{V}=\{1, 2, \ldots, n\}$ is the set of the agents' indices and $\mathcal{E} \subseteq \mathcal{V} \times \mathcal{V}$ is the set of edges. The edge $(i,j)\in\mathcal{E}$ if and only if agents~$i$ and~$j$ can communicate with each other. Let $W=[w_{ij}]_{n\times n}\in\mathbb{R}^{n\times n}$ be the positively weighted adjacency matrix of $\mathcal{G}$, namely $w_{ij}>0$ if $(i,j)\in\mathcal{E}$, and $w_{ij}=0$, otherwise. Note that $w_{ii}=0$ due to the self edge $(i,i)\notin\mathcal{E}$. We use $\mathcal{N}_i=\{j\in\mathcal{V}|~(i,j)\in\mathcal{E}\}$ to denote the neighbor set of agent $i$ and use $D=\text{diag}[d_1,d_2,\cdots,d_n]$ to denote the degree matrix, where $d_i=\sum_{j}^n w_{ij},~\forall i\in\mathcal{V}$. The Laplacian matrix of graph $\mathcal{G}$ is denoted by $L=D-W$.
	
\begin{assumption}\label{as:strongconnected}
	The undirected graph $\mathcal{G}(\mathcal{V}, \mathcal{E})$ is connected and $W$ is a doubly stochastic matrix, i.e., $\mathbf{1}^\top W=\mathbf{1}^\top$ and $W\mathbf{1}=\mathbf{1}$.
\end{assumption}

\subsection{Compression Method}
To improve the communication efficiency, we consider the situation where agents compress the information before sending it. More specifically, for any $x\in\R^d$, we consider a general class of stochastic compressors $C(x,\varrho)$ and use $f_c(x,\varrho)$ to denote the corresponding probability density functions, where $\varrho$ is a random perturbation variable. Furthermore, the compressors $C(x,\varrho)$ can be simplified to $C(x)$ when the distribution of $\varrho$ is given. We then introduce the following assumption.

\begin{assumption}\label{as:compressor}
For some $\varphi\in(0,1]$ and $r>0$, the stochastic compressor $C(\cdot):\mathbb{R}^d\mapsto\mathbb{R}^d$ satisfies 
\begin{align*}
    \mathbb{E}_C\left[
        \left\Vert \frac{C(x)}{r}-x\right\Vert^2\right]\leq(1-\varphi)\left\Vert x\right\Vert^2, \forall x\in\mathbb{R}^d,\addtag\label{eq:propertyofcompressors}
\end{align*}
where $\mathbb{E}_C$ denotes the expectation over the internal randomness of the stochastic compression operator $C$. 
\end{assumption}
From~\eqref{eq:propertyofcompressors} and the Cauchy-Schwarz inequality, one obtains that
\begin{align}\label{eq:propertyofcompressors1}
    \mathbb{E}_C\left[
    \left\Vert C(x)-x\right\Vert^2\right]\leq r_0\left\Vert x\right\Vert^2, \forall x\in\mathbb{R}^d.
\end{align}
where $r_0=2r^2(1-\varphi)+2(1-r)^2$.
\begin{remark}
Compressors under Assumption~\ref{as:compressor} are general. As pointed out in~\cite{liao2022compressed}, the compressors satisfying the Assumption~\ref{as:compressor} cover a class of unbiased compressors~\cite{alistarh2017qsgd,liu2021linear} and biased but contractive compressors~\cite{reisizadeh2019exact,koloskova2019decentralized,taheri2020quantized}. Noting that, the compressors satisfying Assumption~\ref{as:compressor} also cover the compressors used in~\cite{wang2022quantization}. Furthermore, it is important to note that $\mathbb{E}_C[
\Vert C(x)-x\Vert^2]=0$ if $\varphi=1,~r=1$, which means that the uncompressed case is also included in Assumption~\ref{as:compressor}. In other words, using compressors satisfying Assumption~\ref{as:compressor} alone is not sufficient to ensure privacy,, and we need to introduce additional stochasticity.
\end{remark}

\subsection{Differential Privacy}

To evaluate the privacy performance, we adopt the notion of $\epsilon$-differential privacy for the distributed optimization, which has recently been studied in~\cite{huang2015differentially,ding2021differentially}. Specifically, we introduce the following definitions.

\begin{definition}\label{def:adjacent}
\textbf{(Adjacency~\cite{chen2023differentially})} Two function sets $\mathcal{S}^{(1)}=\{f_i^{(1)}\}_{i=1}^n$ and $\mathcal{S}^{(2)}=\{f_i^{(2)}\}_{i=1}^n$ are said to be adjacent if there exists some $i_0\in\{1,2,\dots,n\}$ such that
\begin{align*}
    f_i^{(1)}=f_i^{(2)}~\forall i\neq i_0,~\text{and}~f_{i_0}^{(1)}\neq f_{i_0}^{(2)}.
\end{align*}
\end{definition}

Given a cost function set $\mathcal{S}$, we denote the randomized mechanism as a mapping $\mathbb{M}({\mathcal{S}},x_0):x_0\mapsto\mathcal{H}$, where $x_0$ and $\mathcal{H}$ are the initial state and observation, respectively.
\begin{definition}\label{def:differentialprivacy}
\textbf{(Differential privacy~\cite{chen2023differentially})} Given $\epsilon>0$ and a randomized mechanism $\mathbb{M}$, for any two adjacent function sets $\mathcal{S}^{(1)}$ and $\mathcal{S}^{(2)}$, any initial state $x_0$ and any observation $\mathcal{H}\subseteq\text{Range}(\mathbb{M})$, the randomized mechanism $\mathbb{M}$ keeps $\epsilon$-differential privacy if
\begin{align}\label{eq:dp}
P\{\mathbb{M}(\mathcal{S}^{(1)},x_0)\in\mathcal{H}\}\leq e^\epsilon P\{\mathbb{M}(\mathcal{S}^{(2)},x_0)\in\mathcal{H}\},
\end{align}
where Range($\mathbb{M}$) denotes the output domain of $\mathbb{M}$.
\end{definition}

Definition~\ref{def:differentialprivacy} shows that the randomized mechanism $\mathbb{M}$ is $\epsilon$-differential private if for any pair of adjacent function sets, the probability density functions of their observations are similar. Intuitively, it is difficult for an adversary to distinguish between two adjacent function sets merely by observations if the corresponding mechanism $\mathbb{M}$ is $\epsilon$-differential private. It is worth noting, as pointed out in~\cite{ding2021differentially,huang2024differential}, that achieving both accurate convergence and strict $\epsilon$-differential privacy (see Definition~\ref{def:differentialprivacy}) simultaneously for Problem~\eqref{P1} is impossible. Intuitively, privacy is guaranteed when the perturbation (noise) is large enough, and more details can be found in~\cite[Proposition~1]{ding2021differentially} and~\cite[Theorem~1]{huang2024differential}. Therefore, in this paper, we are more concerned about the trade-off between privacy and accuracy.

\section{Distributed Gradient tracking Algorithm with Compressed Communication}\label{sec:Algorithm}
In this section, we provide the nonconvex differentially Private Gradient Tracking algorithm under Compressed communication  (PGTC), which is shown in Algorithm~\ref{Al:PGTC}.

\subsection{Algorithm Description}
The proposed PGTC is inspired by the DiaDSP Algorithm~\cite{ding2021differentially}. In this paper, we additionally consider the compressed information and the nonconvex cost functions. We first assume that each agent $i\in\mathcal{V}$ maintains an estimate $x_{i,k}$ and an auxiliary variable $y_{i,k}$ for tracking the global gradient. To guarantee differential privacy, each agent $i$ broadcasts the noisy $x_{i,k}^a$ and $y_{i,k}^a$ to its neighbors $\mathcal{N}_i$ per step, where
\begin{align*}
&x_{i,k}^a=x_{i,k}+\xi_{x_i,k},\addtag\label{eq:addnoisex}\\
&y_{i,k}^a=y_{i,k}+\xi_{y_i,k},\addtag\label{eq:addnoisey}
\end{align*}
and $\xi_{x_i,k}$ and $\xi_{y_i,k}$ are Laplace noises. Similar to the DiaDSP Algorithm~\cite{ding2021differentially}, we set $\xi_{x_i,k}\sim\text{Lap}_d(s_{\xi_{x_i}}q_i^k)$ and $\xi_{y_i,k}\sim\text{Lap}_d(s_{\xi_{y_i}}q_i^k)$, $\forall i\in\mathcal{V}$, where $s_{\xi_{x_i}}>0,~s_{\xi_{y_i}}>0$, and $0<q_i<1$. After the information exchange, agent $i$ performs the following updates:
\begin{align}
	&x_{i,k+1}=\sum_{j=1}^n w_{ij}x_{j,k}^a-\eta y_{i,k},\\
	&y_{i,k+1}=\sum_{j=1}^n w_{ij}y_{j,k}^a+\lf_i(x_{i,k+1})-\lf_i(x_{i,k}),
\end{align}
where the stepsize $\eta$ is a constant and the initial value $y_{i,0}=\lf_i(x_{i,0}),~\forall i\in\mathcal{V}$. To improve the communication efficiency, we introduce a general class of compressors $C(\cdot)$ and use the revised compressed variable $\hat{x}_{i,k}$, $\hat{y}_{i,k}$ to replace $x_{i,k}^a$, $y_{i,k}^a$, respectively. Noting that if the compressed variable $C(x_{i,k}^a)$ and $C(y_{i,k}^a)$ are directly used here, the compression error will be accumulate and affect the convergence. Then, we design the updates of agent $i\in\mathcal{V}$ as follows:
\begin{align*}
	&~x_{i,k+1}=x_{i,k}^a+\gamma\sum_{j=1}^n w_{ij}(\hat{x}_{j,k}-\hat{x}_{i,k})-\eta y_{i,k},\addtag\label{iterationx}\\
	&~y_{i,k+1}=y_{i,k}^a+\gamma\sum_{j=1}^n w_{ij}(\hat{y}_{j,k}-\hat{y}_{i,k})+\lf_i(x_{i,k+1})-\lf_i(x_{i,k}),\addtag\label{iterationy}
\end{align*}
where
\begin{align*}
	&\hat{x}_{j,k}=x_{j,k}^c+C(x_{j,k}^a-x_{j,k}^c),\addtag\label{citerationx}\\
	&\hat{y}_{j,k}=y_{j,k}^c+C(y_{j,k}^a-y_{j,k}^c),\addtag\label{citerationy}\\
	&x_{j,k+1}^c=(1-\alpha_x)x_{j,k}^c+\alpha_x\hat{x}_{j,k},\addtag\label{citerationxc}\\
	&y_{j,k+1}^c=(1-\alpha_y)y_{j,k}^c+\alpha_y\hat{y}_{j,k},\addtag\label{citerationyc}
\end{align*}
with $\gamma$, $\alpha_x$, and $\alpha_y$ being some positive parameters. We assume that $x_{i,0}^c=\mathbf{0}$ and $y_{i,0}^c=\mathbf{0}$,~$\forall i\in\mathcal{V}$. Let $W_\gamma\triangleq((1-\gamma)I_n+\gamma W)\otimes I_d$, and then \eqref{iterationx} and \eqref{iterationy} can be rewritten into the following compact form
\begin{align*}
	&\mathbf{x}_{k+1}=W_\gamma(\x_k+\nxk)+\gamma(W-I_n)\otimes I_d(\hx_k-\x_k-\nxk)-\eta\y_k,\addtag\label{iterationx1}\\
	&\mathbf{y}_{k+1}=W_\gamma(\y_k+\nyk)+\gamma(W-I_n)\otimes I_d(\hy_k-\y_k-\nyk)+\f(\x_{k+1})-\f(\x_k),\addtag\label{iterationy1}
\end{align*}
where $\x_k\triangleq[x_{1,k}^\top,\dots,x_{n,k}\top]^\top\in\R^{nd}, \y_k\triangleq[y_{1,k}^\top,\dots,$ $y_{n,k}^\top]^\top\in\R^{nd}, \hx_k\triangleq\left[\hat{x}_{1,k}^\top,\dots,\hat{x}_{n,k}^\top\right]^\top\in\R^{nd},\hy_k$ $\triangleq[\hat{y}_{1,k}^\top,\dots,\hat{y}_{n,k}^\top]^\top\in\R^{nd},~\f(\mathbf{x}_k)\triangleq[\lf_1(x_{1,k})^\top,\dots,$ $\lf_n(x_{n,k})^\top]^\top\in\R^{nd}, \nxk\triangleq[\xi_{x_{1},k}^\top,\dots,\xi_{x_{n},k}^\top]^\top\in\R^{nd},\nyk\triangleq[\xi_{y_{1},k}^\top,\dots, \xi_{y_{n},k}^\top]^\top\in\R^{nd}$.
\begin{algorithm}[]
	\caption{PGTC Algorithm }
	\label{Al:PGTC}
	\begin{algorithmic}[1]
		\STATE \textbf{Input:} Stopping time $K$, adjacency matrix $W$, and positive parameters $\eta$, $\gamma$, $\alpha_x$, $\alpha_y$, $s_{\xi_{x_i}}$, $s_{\xi_{y_i}}$, $q_i$, $\forall i\in\mathcal{V}$.
		\STATE \textbf{Initialization:} Each ~$i\in\mathcal{V}$ chooses arbitrarily $x_{i,0}\in\mathbb{R}^d$, $x^c_{i,0}=\bf{0}$, $y^c_{i,0}=\bf{0}$, and computes $y_{i,0}=\lf_i(x_{i,0})$.
		\FOR{$k=0,1,\dots,K-1$}
		\FOR {for $i\in\mathcal{V}$ in parallel} 
		\STATE Generate Laplace noises $\xi_{x_i,k}\sim\text{Lap}_d(s_{\xi_{x_i}}q_i^k)$ and $\xi_{y_i,k}\sim\text{Lap}_d(s_{\xi_{y_i}}q_i^k)$.
		\STATE Obtain $x_{i,k}^a$ and $y_{i,k}^a$ from~\eqref{eq:addnoisex} and~\eqref{eq:addnoisey}, respectively.
		\STATE Compute $C(x_{i,k}^a-x_{i,k}^c)$ and $C(y_{i,k}^a-y_{i,k}^c)$, then broadcast them to its neighbors $\mathcal{N}_i$.
		\STATE Receive $C(x_{j,k}^a-x_{j,k}^c)$, and $C(y_{j,k}^a-y_{j,k}^c)$ from $j\in\mathcal{N}_i$.
		\STATE Update $\hat{x}_{i,k}$, $\hat{y}_{i,k}$, $x_{i,k+1}^c$, and $y_{i,k+1}^c$, $\forall j\in\mathcal{N}_i\cup\{i\}$, from~\eqref{citerationx}--\eqref{citerationyc}, respectively.
		\STATE Update $x_{i,k+1}$ and $y_{i,k+1}$ from~\eqref{iterationx} and~\eqref{iterationy}, respectively.

		\ENDFOR
		\ENDFOR
		\STATE \textbf{Output:} \{$x_{i,k}$\}.
	\end{algorithmic}
\end{algorithm}

\subsection{Convergence Analysis of PGTC}
In this section, we will analyze convergence of PGTC under the compressors satisfying Assumption~\ref{as:compressor}. 

Let $\Theta_k\triangleq[\oxk, \oyk, \osxk, \osyk]^\top$, where $\oxk=\Vert \x_k-\bx_k\Vert^2$, $\oyk=\Vert \y_k-\by_k\Vert^2$, $\osxk=\Vert \sxk\Vert^2$, and $\osyk=\Vert \syk\Vert^2$, with $\sxk\triangleq[(x_{1,k}^c-x_{1,k}^a)^\top,\dots,(x_{n,k}^c-x_{n,k}^a)^\top]^\top\in\R^{nd},~\syk\triangleq[(y_{1,k}^c-y_{1,k}^a)^\top,\dots,(y_{n,k}^c-y_{n,k}^a)^\top]^\top\in\R^{nd}$. The following lemma constructs a set of linear inequalities that is related to $\Theta_k$.

\begin{lemma}\label{lemma:linearinequalities}
	Suppose Assumptions~\ref{as:smooth}--\ref{as:finite} and \ref{as:strongconnected}--\ref{as:compressor} hold. Under Algorithm~\ref{Al:PGTC}, if $\alpha_x,\alpha_y\in(0,\frac{1}{r})$, we have the following linear inequalities:
	\begin{align}\label{eq:linearinequalities}
		\mathbb{E}[\Theta_{k+1}]\preceq G\mathbb{E}[\Theta_k]+\vartheta_1\E[\left\Vert\by_k\right\Vert^2]+\vartheta_2 \bar{q}^{2k}\bar{s_\xi}^2,
	\end{align}
where $\bar{q}=\max_i\{q_i\}$, $\bar{s_\xi}=\max_i\{s_{\xi_{x_i}},s_{\xi_{y_i}}\}$, the notation $\preceq$ means element-wise less than or equal to, the matrix $G\in\mathbb{R}^{4\times 4}$ and vectors $\vartheta_1,\vartheta_2\in\R^{4}$ are given in Appendix~\ref{app-convergence00}.
\end{lemma}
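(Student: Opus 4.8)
The plan is to derive four scalar recursions, one for each component of $\Theta_k$, and assemble them into the matrix form~\eqref{eq:linearinequalities}. I would begin by applying the averaging operator $\frac{1}{n}(\mathbf{1}_n^\top\otimes I_d)$ to the compact updates~\eqref{iterationx1} and~\eqref{iterationy1}. Since $W$ is doubly stochastic (Assumption~\ref{as:strongconnected}), $\mathbf{1}_n^\top W_\gamma=\mathbf{1}_n^\top$ and $\mathbf{1}_n^\top(W-I_n)=\mathbf{0}$, so the consensus-error dynamics decouple cleanly from the mean. This gives $\x_{k+1}-\bx_{k+1}=W_\gamma(\x_k-\bx_k)+\widetilde{W}_\gamma\nxk+\gamma(W-I_n)\otimes I_d(\hx_k-\x_k-\nxk)-\eta(\y_k-\by_k)$, where $\widetilde{W}_\gamma$ denotes the centered averaging matrix, and an analogous identity for the $\y$-error with the extra gradient-difference term $\f(\x_{k+1})-\f(\x_k)$.

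Next I would bound $\E[\oxk[k+1]]$ and $\E[\oyk[k+1]]$. The key tool is the contraction $\Vert W_\gamma(\x_k-\bx_k)\Vert\le\lwi\Vert\x_k-\bx_k\Vert$ coming from the spectral gap of $W$, combined with Young's inequality $\Vert a+b\Vert^2\le(1+\beta)\Vert a\Vert^2+(1+\beta^{-1})\Vert b\Vert^2$ to split the contracting term from the compression, noise, and coupling terms. The compression contribution $\gamma(W-I_n)\otimes I_d(\hx_k-\x_k-\nxk)$ is controlled by rewriting $\hx_k-\x_k-\nxk=(\hx_k-\x_k^a)+(\x_k^a-\x_k-\nxk)=(\hx_k-\x_k^a)$ and invoking Assumption~\ref{as:compressor} via~\eqref{eq:propertyofcompressors1}: taking $\E_C$ yields $\E_C[\Vert\hx_k-\x_k^a\Vert^2]\le r_0\Vert\x_k^c-\x_k^a\Vert^2=r_0\osxk$, linking this term to the $\osxk$ component. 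The Laplace-noise term is handled by the moment identity $\E(x^2)=2\theta^2$ stated in the Notations, which converts $\E[\Vert\nxk\Vert^2]$ into a multiple of $\bar q^{2k}\bar{s_\xi}^2$ (the source of $\vartheta_2$), and the gradient-difference term in the $\y$-recursion is bounded through $L_f$-smoothness~\eqref{eqn:smooth}, expanding $\Vert\f(\x_{k+1})-\f(\x_k)\Vert^2\le L_f^2\Vert\x_{k+1}-\x_k\Vert^2$ and re-expressing $\x_{k+1}-\x_k$ via~\eqref{iterationx1}, which is where the $\E[\Vert\by_k\Vert]^2$ term (hence $\vartheta_1$) enters.

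For the two compression-state components I would use the definitions $\sxk=\x_k^c-\x_k^a$ and the update~\eqref{citerationxc}. Subtracting $\x_{k+1}^a$ from $\x_{k+1}^c=(1-\alpha_x)\x_k^c+\alpha_x\hx_k$ and again applying Young's inequality together with~\eqref{eq:propertyofcompressors1} produces a contraction factor in $\osxk$ provided $\alpha_x\in(0,\frac1r)$ — this is exactly where the stated stepsize restriction is used — plus cross terms in $\oxk$, $\oyk$, and the noise. The residual $\x_{k+1}^a-\x_k^a$ is again rewritten through the state update and smoothness, feeding back into the same four quantities.

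The main obstacle I anticipate is the bookkeeping of the coupling: because $\x_{k+1}-\x_k$ and $\x_{k+1}^a-\x_k^a$ both depend on $\y_k$, the compression error, and the noise, each of the four recursions acquires contributions from all four components of $\Theta_k$, so every entry of $G$ must be tracked consistently and the Young's-inequality free parameters ($\beta$'s) must be chosen so that the diagonal entries genuinely contract. A subtler point is the careful separation of the internal compressor randomness (handled by $\E_C$ conditionally, to expose the $r_0$ factor) from the Laplace-noise randomness; one must take the compressor expectation first, conditioned on the current iterates and noise, and only afterwards take the full expectation, otherwise the independence needed to drop cross terms between $\nxk$ and the compression is not available. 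Once the four inequalities are in hand, collecting the coefficients into $G$, $\vartheta_1$, and $\vartheta_2$ is routine.
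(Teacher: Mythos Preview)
Your proposal is correct and follows essentially the same route as the paper: derive four scalar recursions for the components of $\Theta_k$ using the spectral contraction of $W_\gamma$ (the paper packages this as Lemma~\ref{lemma:upperboundoflw}), Young's inequality with a free parameter, the compression bound~\eqref{eq:propertyofcompressors1} to link the compressor error to $\osxk,\osyk$, $L_f$-smoothness to control the gradient increment (which introduces $\E[\Vert\by_k\Vert^2]$), and the Laplace second moments for the noise, then assemble the coefficients into $G,\vartheta_1,\vartheta_2$. The paper's only additional wrinkle is that it names the contraction factor arising from $\alpha_x,\alpha_y\in(0,\frac1r)$ as $\varphi_1=\min\{\alpha_x r\varphi,\alpha_y r\varphi\}$ and chooses the Young parameters explicitly (e.g.\ $s=\gamma\rho/2$ and $s=\varphi_1/(2-2\varphi_1)$), but these are exactly the bookkeeping details you anticipated.
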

\begin{proof}
	See Appendix~\ref{app-convergence00}.
\end{proof}
To analyze the convergence of PGTC, we choose the following Lyapunov function
\begin{align}\label{eqn:lyapunov}
\begin{aligned}
V_k&=\E[f(\bar{x}_k)]-f^*+\frac{\zeta_1L}{n}\E[\oxk]+\frac{\zeta_2}{nL}\E[\oyk]+\frac{\zeta_3L}{n}\E[\osxk]+\frac{\zeta_4}{nL}\E[\osyk],\\
&=\E[f(\bar{x}_k)]-f^*+\mathbf{s}^\top\E[\Theta_k],
\end{aligned}
\end{align} 
where $\bar{x}_k=\frac{\mathbf{1}^\top}{n}\x_k$, $\zeta_1$--$\zeta_4$ are some positive constants that will be given later, 
\begin{align*}
	\mathbf{s}=\left[\frac{\zeta_1L}{n}~\frac{\zeta_2\rho^2}{nL}~\frac{\zeta_3L}{n}~\frac{\zeta_4\rho^2}{nL}\right]
\end{align*}
with $\rho=1-\bar{\lambda}_{W-\frac{\mathbf{1}\mathbf{1}^\top}{n}}$. Since $f^*$ is the minimum function value, we know that the Lyapunov function $V_k$ is well defined.

We now show the convergence results of PGTC.
\begin{theorem}\label{theo:convergence0}
Suppose Assumptions~\ref{as:smooth}--\ref{as:finite} and \ref{as:strongconnected}--\ref{as:compressor} hold. Under Algorithm~\ref{Al:PGTC}, assume $\alpha_x,\alpha_y\in(0,\frac{1}{r})$ and let $\gamma=\zeta_\gamma\rho\varphi_1,~\eta=\zeta_\eta\gamma\rho^2/L_f$, where $0<\zeta_\gamma=\zeta_\eta\leq\bar{\zeta}_1$. For any $T\in\mathbb{N}$, it holds that 
\begin{align*}
 &\frac{1}{T}\sum_{k=0}^{T}\big(\mathbb{E}[\Vert\lf(\bar{x}_k)\Vert^2]+\mathbb{E}[\Vert \x_k-\bx_k\Vert^2]\big)\leq\frac{\bar{\kappa}_1 M_1}{T}+\frac{4}{n}\mathbb{E}\left[\left\Vert\sum_{t=0}^\infty\xi_{y,t}\right\Vert^2\right],\addtag\label{eq:theo1}
\end{align*}
where $\bar{\kappa}_1$, $M_1$, $\bar{\zeta}_1$ and $\zeta_1$--$\zeta_4$ are constants given in Appendix~\ref{app-convergence0}.
\end{theorem}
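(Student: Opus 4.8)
The plan is to track the dynamics of the network average $\bar{x}_k$ and fuse a descent inequality for $f(\bar{x}_k)$ with the linear system for $\Theta_k$ from Lemma~\ref{lemma:linearinequalities} through the Lyapunov function $V_k$ in~\eqref{eqn:lyapunov}. First I would exploit the doubly stochastic structure of $W$: left-multiplying~\eqref{iterationx1} and~\eqref{iterationy1} by $\frac{1}{n}(\mathbf{1}^\top\otimes I_d)$ annihilates the consensus term $\gamma(W-I_n)$ and leaves $W_\gamma$ acting as the identity on averages, giving $\bar{x}_{k+1}=\bar{x}_k-\eta\bar{y}_k+\bar{\xi}_{x,k}$ and $\bar{y}_{k+1}=\bar{y}_k+\bar{\xi}_{y,k}+\overline{\f(\x_{k+1})}-\overline{\f(\x_k)}$. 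Since $y_{i,0}=\lf_i(x_{i,0})$, telescoping the second identity yields the key gradient-tracking relation $\bar{y}_k=\overline{\f(\x_k)}+\sum_{t=0}^{k-1}\bar{\xi}_{y,t}$, which isolates the accumulated privacy noise responsible for the persistent residual term in~\eqref{eq:theo1}.

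Next I would apply the $L_f$-smoothness of $f$ to $\bar{x}_{k+1}=\bar{x}_k-\eta\bar{y}_k+\bar{\xi}_{x,k}$ to get a one-step estimate of the form $\E[f(\bar{x}_{k+1})]\le\E[f(\bar{x}_k)]-\eta\,\E[\lf(\bar{x}_k)^\top\bar{y}_k]+\tfrac{L_f}{2}\E\|\eta\bar{y}_k-\bar{\xi}_{x,k}\|^2$, where the cross term $\E[\lf(\bar{x}_k)^\top\bar{\xi}_{x,k}]$ vanishes because $\bar{\xi}_{x,k}$ is zero-mean and independent of the past. Substituting the tracking relation, writing $\overline{\f(\x_k)}=\lf(\bar{x}_k)+(\overline{\f(\x_k)}-\lf(\bar{x}_k))$, and bounding $\|\overline{\f(\x_k)}-\lf(\bar{x}_k)\|^2\le\frac{L_f^2}{n}\oxk$ via Assumption~\ref{as:smooth}, the leading contribution becomes $-\eta\|\lf(\bar{x}_k)\|^2$, while Young's inequality turns the remaining cross terms into controllable multiples of $\oxk$, of $\|\lf(\bar{x}_k)\|^2$ with small coefficient, and of the accumulated noise $\E\|\sum_{t=0}^{k-1}\bar{\xi}_{y,t}\|^2$; the injected noise $\bar{\xi}_{x,k}$ contributes only an $\mathcal{O}(\bar q^{2k})$ term.

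I would then combine this descent with $\mathbf{s}^\top\E[\Theta_{k+1}]$, bounded through Lemma~\ref{lemma:linearinequalities}, to estimate $V_{k+1}-V_k$. Here the coupling term $\vartheta_1\E\|\by_k\|^2$ must itself be re-expressed via the tracking relation and $\E\|\by_k\|^2=n\E\|\bar{y}_k\|^2$ in terms of $\|\lf(\bar{x}_k)\|^2$, $\oxk$, and accumulated noise. The crux is to select the weights $\zeta_1$--$\zeta_4$ together with the stepsizes $\gamma=\zeta_\gamma\rho\varphi_1$ and $\eta=\zeta_\eta\gamma\rho^2/L_f$ so that $\mathbf{s}^\top G\preceq\mathbf{s}^\top$ componentwise with enough slack to absorb all $\oxk$- and $\E\|\by_k\|^2$-type cross terms generated above. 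Under the hierarchical small-parameter choice $\eta\ll\gamma\ll\rho$ this should force $V_{k+1}\le V_k-c_1\eta\,\E\|\lf(\bar{x}_k)\|^2-c_2\eta\,\E[\oxk]+c_3\bar q^{2k}\bar{s}_\xi^2+c_4\eta\,\E\|\textstyle\sum_{t=0}^{k-1}\bar{\xi}_{y,t}\|^2$ for positive constants $c_1,\dots,c_4$.

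Finally I would telescope from $k=0$ to $T$, use $V_{T+1}\ge0$ (since $f(\bar{x}_{T+1})\ge f^*$ and $\mathbf{s},\Theta_{T+1}\ge0$), and divide by $T$. The geometric sum $\sum_k\bar q^{2k}\le(1-\bar q^2)^{-1}$ collapses into the $\mathcal{O}(1/T)$ term $\bar\kappa_1 M_1/T$, while the monotone bound $\E\|\sum_{t=0}^{k-1}\bar{\xi}_{y,t}\|^2\le\E\|\sum_{t=0}^{\infty}\bar{\xi}_{y,t}\|^2$ together with $\E\|\sum_t\bar{\xi}_{y,t}\|^2=\frac{1}{n^2}\E\|\sum_t\xi_{y,t}\|^2$ gives the residual $\frac{4}{n}\E\|\sum_{t=0}^\infty\xi_{y,t}\|^2$, with the dominant coefficient $c_1\eta$ cancelling the stepsize so the residual is $\eta$-free. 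The main obstacle I anticipate is precisely this parameter-matching step: verifying that a single scaling $\zeta_\gamma=\zeta_\eta\le\bar\zeta_1$ simultaneously renders $G$ contractive in the $\mathbf{s}$-weighted sense and dominates every cross term produced by the descent and by $\E\|\by_k\|^2$, all while the accumulated, non-vanishing privacy noise sits permanently inside the Lyapunov recursion.
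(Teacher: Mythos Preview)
Your proposal is correct and follows essentially the same route as the paper: derive the average dynamics and the tracking identity $\bar y_k=\overline{\nabla f(\x_k)}+\sum_{t<k}\bar\xi_{y,t}$, combine a smoothness-based descent estimate for $f(\bar x_k)$ with $\mathbf{s}^\top\E[\Theta_{k+1}]$ from Lemma~\ref{lemma:linearinequalities}, verify that the scaling $\eta\ll\gamma\ll\rho\varphi_1$ makes the coupling coefficients nonpositive, and telescope.

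The one place where the paper proceeds slightly differently is in handling $\|\by_k\|^2$. Rather than substituting the tracking relation into $-\eta\,\nabla f(\bar x_k)^\top\bar y_k$ and then re-expanding the $\vartheta_1\E\|\by_k\|^2$ contribution from Lemma~\ref{lemma:linearinequalities} a second time (as you propose), the paper applies the polarization identity $-2a^\top b=-\|a\|^2-\|b\|^2+\|a-b\|^2$ to obtain directly
\[
-\tfrac{\eta}{2}\|\nabla f(\bar x_k)\|^2-\tfrac{\eta}{2}\|\bar y_k\|^2+\tfrac{\eta}{2}\|\nabla f(\bar x_k)-\bar y_k\|^2.
\]
The last term, via tracking, reduces to consensus error plus accumulated noise, while the \emph{negative} $-\tfrac{\eta}{2n}\|\by_k\|^2$ is kept intact and used to absorb the positive $\mathbf{s}^\top\vartheta_1\|\by_k\|^2$ by checking the single scalar condition $\tfrac{\eta}{2}-\eta^2L_f-\mathbf{s}^\top\vartheta_1\ge 0$. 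This avoids a second expansion of $\|\by_k\|^2$ and keeps the constants cleaner; your direct-substitution approach also works because $\mathbf{s}^\top\vartheta_1=O(\eta^2)$, so the extra $\|\nabla f(\bar x_k)\|^2$ it generates is lower order. As a minor side note, the paper does not exploit the independence $\E[\nabla f(\bar x_k)^\top\bar\xi_{x,k}]=0$ that you invoke; it simply bounds that cross term by Young's inequality, which is coarser but still sufficient.
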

\begin{proof}
	See Appendix~\ref{app-convergence0}.
\end{proof}

Due to the fact that $\xi_{y_i,k},~\forall i\in\mathcal{V},~\forall k\in\mathbb{N}$ are 
independent of each other, $\E\left\Vert\sum_{t=0}^\infty\xi_{y,t}\right\Vert^2$ can be rewritten as $\sum_{t=0}^\infty\E\Vert\xi_{y,t}\Vert^2$. Then we have the following result.
\begin{corollary}\label{coro0}
    Under the same assumptions and parameters in Theorem~\ref{theo:convergence0}. It holds that
\begin{align*}
 \frac{1}{T}\sum_{k=0}^{T}\big(\mathbb{E}[\Vert\lf(\bar{x}_k)\Vert^2]+\mathbb{E}[\Vert \x_k-\bx_k\Vert^2]\big)\leq\frac{\bar{\kappa}_1 M_1}{T}+\frac{8d\bar{s}_{\xi}^2}{1-\bar{q}^2},
\end{align*}
where $d$ is the dimension of the state vector $x$ and $\bar{s}_{\xi}$ and $\bar{q}$ are defined in Lemma~\ref{lemma:linearinequalities}.
\end{corollary}
\begin{remark}
    Theorem~\ref{theo:convergence0} and Corollary~\ref{coro0} show that PGTC converges to a neighborhood of a stationary point with the rate $\mathcal{O}(1/T)$ for general nonconvex cost functions. The same convergence rate was achieved by algorithms proposed in~\cite{yi2022communication,zhao2022beer} under the same assumptions and cost function. However, they do not consider the privacy concern. Furthermore, since the noise added to the gradient tracking is accumulative, the convergence is affected by $\sum_{t=0}^\infty\xi_{y,t}$. More details will be given later. Furthermore, Theorem~\ref{theo:convergence0} does not require $\gamma$ and $\eta$ to be some fixed constants due to the fact that $0<\zeta_\gamma=\zeta_\eta\leq\bar{\zeta}_1$. It is only necessary to select a $\zeta_\gamma=\zeta_\eta$ that satisfies the above condition in the implement.
\end{remark}
Then we provide the linear convergence analysis with the P--L condition.
\begin{theorem}\label{theo:convergence}
Suppose Assumptions~\ref{as:smooth}--\ref{as:compressor} hold. Under Algorithm~\ref{Al:PGTC}, if $\alpha_x,\alpha_y\in(0,\frac{1}{r})$ and let $\gamma=\zeta_\gamma\rho\varphi_1,~\eta=\zeta_\eta\gamma\rho^2/L_f$, where $0<\zeta_\gamma=\zeta_\eta\leq\bar{\zeta}_2$. It holds that
\begin{align*}
	\E[V_{k+1}]\leq (1-\frac{\eta\nu}{2})\E[V_k]+b_1 \bar{q}^{2k}\bar{s_\xi}^2+\frac{\eta}{n}\E\left\Vert\sum_{t=0}^k\xi_{y,t}\right\Vert^2,
\end{align*}
where $b_1$ $M_1$, $\bar{\zeta}_2$ and $\zeta_1$--$\zeta_4$ are constants given in Appendices~\ref{app-convergence0} and~\ref{app-convergence}.
\end{theorem}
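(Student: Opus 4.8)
The plan is to prove a one-step contraction for $V_k$ by controlling its two pieces separately, the optimality gap $\E[f(\bar{x}_k)]-f^*$ and the weighted consensus error $\mathbf{s}^\top\E[\Theta_k]$, and then adding them with a judicious choice of the weights $\zeta_1,\dots,\zeta_4$ and stepsizes. First I would isolate the mean dynamics: averaging the compact recursions~\eqref{iterationx1}--\eqref{iterationy1} and using $\mathbf{1}^\top W=\mathbf{1}^\top$, $\mathbf{1}^\top(W-I_n)=0$ gives $\bar{x}_{k+1}=\bar{x}_k-\eta\bar{y}_k+\bar{\xi}_{x,k}$ and $\bar{y}_{k+1}=\bar{y}_k+\bar{\xi}_{y,k}+\bar{g}_{k+1}-\bar{g}_{k}$, where $\bar{y}_k=\tfrac1n(\mathbf{1}_n^\top\otimes I_d)\y_k$, $\bar{g}_k=\tfrac1n\sum_{i=1}^n\lf_i(x_{i,k})$, and $\bar{\xi}_{x,k},\bar{\xi}_{y,k}$ are the averaged Laplace noises. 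Since $y_{i,0}=\lf_i(x_{i,0})$, telescoping the $\bar{y}$-recursion yields the gradient-tracking identity $\bar{y}_k=\bar{g}_k+e_k$ with $e_k:=\sum_{t=0}^{k-1}\bar{\xi}_{y,t}$. Crucially, the accumulated noise $e_k$ does not cancel, which is exactly the source of the residual term in the statement.

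Next I would run the descent estimate. As $f$ is $L_f$-smooth, applying~\eqref{eqn:smooth1} to $f$ along $\bar{x}_{k+1}-\bar{x}_k=\bar{\xi}_{x,k}-\eta\bar{y}_k$ and taking expectations eliminates the linear term in $\bar{\xi}_{x,k}$ (zero-mean and independent of $\bar{x}_k$), leaving the quadratic $\tfrac{L_f}2\E\|\bar{\xi}_{x,k}-\eta\bar{y}_k\|^2$ and the cross term $-\eta\E\langle\lf(\bar{x}_k),\bar{y}_k\rangle$. Substituting $\bar{y}_k=\bar{g}_k+e_k$, using $\|\bar{g}_k-\lf(\bar{x}_k)\|\le\tfrac{L_f}{\sqrt n}\|\x_k-\bx_k\|$ (smoothness, which converts $\bar{g}_k$ into $\lf(\bar{x}_k)$ at the price of $\oxk$), and splitting off $-\eta\E\langle\lf(\bar{x}_k),e_k\rangle$ by Young's inequality, I obtain a bound of the form $\E[f(\bar{x}_{k+1})]-f^*\le\E[f(\bar{x}_k)]-f^*-c_1\eta\E\|\lf(\bar{x}_k)\|^2+c_2\tfrac{\eta L_f^2}n\E[\oxk]+c_3\eta\E\|e_k\|^2+(\text{pure }\bar{\xi}_{x,k}\text{ noise})$. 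The P--L condition (Assumption~\ref{as:PLcondition}) then turns $-c_1\eta\|\lf(\bar{x}_k)\|^2$ into $-2c_1\eta\nu(f(\bar{x}_k)-f^*)$, producing the contraction factor on the optimality gap, with the constants to be tuned so that this factor becomes $1-\tfrac{\eta\nu}2$.

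Then I would combine with Lemma~\ref{lemma:linearinequalities}, $\mathbf{s}^\top\E[\Theta_{k+1}]\le\mathbf{s}^\top G\,\E[\Theta_k]+\mathbf{s}^\top\vartheta_1\E\|\by_k\|^2+\mathbf{s}^\top\vartheta_2\,\bar{q}^{2k}\bar{s}_\xi^2$, after bounding $\E\|\by_k\|^2$ by a combination of $\E\|\lf(\bar{x}_k)\|^2$, $\E[\oxk]$, and $\E\|e_k\|^2$. Adding the descent and consensus inequalities, the crux is to pick $\zeta_1,\dots,\zeta_4$ and the common scaling $\zeta_\gamma=\zeta_\eta\le\bar{\zeta}_2$ (so $\gamma=\zeta_\gamma\rho\varphi_1$, $\eta=\zeta_\eta\gamma\rho^2/L_f$) so that: (i) $\mathbf{s}^\top G\preceq(1-\tfrac{\eta\nu}2)\mathbf{s}^\top$ holds with enough surplus in the $\oxk$-coordinate to absorb the $c_2\tfrac{\eta L_f^2}n\E[\oxk]$ generated by the descent; and (ii) the positive $\|\lf(\bar{x}_k)\|^2$ entering through $\mathbf{s}^\top\vartheta_1\E\|\by_k\|^2$, which is $O(\eta^2)$ because the $y$-recursion sees $\bar{y}_k$ only through $\f(\x_{k+1})-\f(\x_k)\sim\eta\bar{y}_k$, is dominated by the $O(\eta)$ negative term $c_1\eta\|\lf(\bar{x}_k)\|^2$ from the descent. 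Finally I would gather all $\bar{q}^{2k}\bar{s}_\xi^2$ contributions into $b_1\bar{q}^{2k}\bar{s}_\xi^2$, and all accumulated-noise contributions via $\|e_k\|^2\le\tfrac1n\|\sum_{t=0}^{k-1}\xi_{y,t}\|^2$ together with $\E\|\sum_{t=0}^{k-1}\xi_{y,t}\|^2\le\E\|\sum_{t=0}^{k}\xi_{y,t}\|^2$ (independent zero-mean increments) into the stated $\tfrac\eta n\E\|\sum_{t=0}^k\xi_{y,t}\|^2$.

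The main obstacle is step (i)--(ii): proving that feasible Lyapunov weights $\zeta_1,\dots,\zeta_4$ and a stepsize threshold $\bar{\zeta}_2$ exist so that the $4\times4$ matrix $G$ (with entries depending on $\gamma,\rho,\varphi,\alpha_x,\alpha_y,r$) contracts with factor exactly $1-\tfrac{\eta\nu}2$ while simultaneously absorbing the two-way coupling between the gradient/optimality term and the consensus errors. This is a small-gain / matrix-perturbation argument, and it is where essentially all of the routine computation resides.
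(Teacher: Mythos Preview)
Your proposal is correct and follows essentially the same route as the paper: derive the mean recursion and the tracking identity $\bar{y}_k=\bar{g}_k+\sum_{t<k}\bar{\xi}_{y,t}$, apply the $L_f$-smoothness descent inequality to $f(\bar{x}_{k+1})$, invoke the P--L condition to get the factor $1-\tfrac{\eta\nu}{2}$ on the optimality gap, add $\mathbf{s}^\top$ times Lemma~\ref{lemma:linearinequalities}, and then verify that the chosen weights $\zeta_1,\dots,\zeta_4$ and the scaling $\zeta_\gamma=\zeta_\eta\le\bar\zeta_2$ make the residual $\Theta_k$ and $\|\by_k\|^2$ coefficients nonpositive.

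The one tactical difference is how you handle $\|\by_k\|^2$. You plan to upper-bound it by a combination of $\|\nabla f(\bar{x}_k)\|^2$, $\Omega_{x,k}$ and $\|e_k\|^2$, and then use that $\mathbf{s}^\top\vartheta_1=O(\eta^2)$ to absorb the resulting terms. The paper instead keeps a \emph{negative} $-\tfrac{1}{n}(\tfrac{\eta}{2}-\eta^2L_f)\|\by_k\|^2$ in the descent estimate, obtained from the polarization $-\eta\langle\nabla f(\bar{x}_k),\bar{y}_k\rangle=-\tfrac{\eta}{2}\|\nabla f(\bar{x}_k)\|^2-\tfrac{\eta}{2}\|\bar{y}_k\|^2+\tfrac{\eta}{2}\|\nabla f(\bar{x}_k)-\bar{y}_k\|^2$, and then simply checks the sign condition $\tfrac{\eta}{2}-\eta^2L_f-\mathbf{s}^\top\vartheta_1\ge0$ so that the whole $\|\by_k\|^2$ term can be dropped. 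This direct cancellation is what delivers the exact coefficient $\tfrac{\eta}{n}$ on the accumulated-noise term; with your expansion you would pick up an extra $O(\eta^2/n)\E\|\sum_{t\le k}\xi_{y,t}\|^2$, which is harmless for the qualitative result but does not match the stated constant without an additional absorption argument.
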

\begin{proof}
	See Appendix~\ref{app-convergence}.
\end{proof}
Similar to the way we obtained Corollary~\ref{coro0}, we have

\begin{corollary}\label{coro}
    Under the same assumptions and parameters in Theorem~\ref{theo:convergence}. It holds that
\begin{align*}
	\E[V_{k+1}]\leq (1-\bar{\kappa}_2)^{k+1}(\E[V_0]+\frac{b_1\bar{s_\xi}^2}{1-\bar{\kappa}_2-\bar{q}^2})+\frac{4d\bar{s}_{\xi}^2}{(1-\bar{q}^2)\nu},
\end{align*}
where~$0<\bar{\kappa}_2<\min\{\frac{\eta\nu}{2},1-\bar{q}^2\}$.
\end{corollary}

From~\eqref{eqn:lyapunov}, it can be observed from Theorem~\ref{theo:convergence} and Corollary~\ref{coro} that  $\E[f(\bar{x}_k)]-f^*+\frac{\zeta_1L}{n}\E[\oxk]=\mathcal{O}((1-\bar{\kappa}_2)^k)+\mathcal{O}(1)$, which means PGTC linearly converges to a neighborhood of the global optimum under P--L condition. However, the size of the neighborhood is determined by the noise accumulated over time on gradients. This is because the noise on gradients accumulates over the iterations. More specifically,  from~\eqref{iterationy1}, we can see that $\frac{1}{n}(\mathbf{1}_n^\top\otimes I_d)\y_k=\frac{1}{n}(\mathbf{1}_n^\top\otimes I_d)(\f(\x_k)+\sum_{t=0}^{k-1}\xi_{y,t})$. As pointed out in~\cite{zhu2019deep}, the differential privacy is achieved only when the noise variance is large enough to affect accuracy. Although there are technical means to avoid the accumulation of noise, e.g., \cite{xiong2021quantized}, it is necessary to reserve the the accumulated noise term for privacy protection. A similar result was also established by DiaDSP proposed in~\cite{ding2021differentially}. However, DiaDSP only works for ideal communication network and the authors did not provide the analysis for nonconvex cost functions. Moreover, DiaDSP demonstrates that the convergence point $x^\infty\in\R^d$ satisfies the following property when the cost functions are strongly convex and smooth:
\begin{align}\label{eq:xinfty}
	\sum_{i=1}^n\lf_i(x^\infty)=-\sum_{i=1}^n\sum_{k=0}^\infty \eta_{y_i,k}.
\end{align}
However, it is important to note that the analysis in Theorem~\ref{theo:convergence} does not yield the same result due to the limitations of the P--L condition. As shown in Assumption~\ref{as:PLcondition}, the P--L condition only establishes the gradient relationship between any point and the optimal point, whereas the strongly convex condition used in~\cite{ding2021differentially} establishes a similar relationship between any two points. Nevertheless, under strongly convex cost functions, the convergence point of PGTC coincides with DiaDSP~\cite{ding2021differentially}, suggesting that the proposed PGTC achieves a comparable level of accuracy to the algorithm with idealized communication, assuming the same cost functions. For additional details, please refer to the our previous work~\cite{xie2023compressed}.

\subsection{$\epsilon$-Differential privacy}
In this section, we show that the differential privacy of all cost functions can be preserved under PGTC. 

We use $\mathcal{H}_k$ to denote the information transmitted between agents at time step $k$, i.e., $\mathcal{H}_k=\{C(x_{i,k}^a-x_{i,k}^c),C(y_{i,k}^a-y_{i,k}^c)|~\forall i\in\mathcal{V}\}$. Without loss of generality, we assume the adversary aims to infer the cost function of agent $i_0$. Consider any two 
adjacent function sets $\mathcal{S}^{(1)}$ and $\mathcal{S}^{(2)}$, and only the cost function $f_{i_0}$ is different between the two sets, i.e., $f_{i_0}^{(1)}\neq f_{i_0}^{(2)}$ and $f_{i}^{(1)}= f_{i}^{(2)},~\forall i\neq i_0$. Before provide the privacy result, we first introduce the following constrained assumption~\cite{ding2021differentially}.

\begin{assumption}~\cite{ding2021differentially}\label{as:distance1}
    For any $x_1,~x_2\in\R^d$, we have
    \begin{align*}
    \lf_{i_0}^{(1)}(x_1)-\lf_{i_0}^{(1)}(x_2)=\lf_{i_0}^{(2)}(x_1)-\lf_{i_0}^{(2)}(x_2).
    \end{align*}
\end{assumption}
\begin{proposition}\label{prop:privacy}
	Suppose Assumptions~\ref{as:smooth}--\ref{as:distance1} hold. PGTC preserves the $\epsilon_{i_0}$-differential privacy for any agent $i_0$'s cost function if $\eta<\frac{1}{2L_f}$ and $q_{i_0}\in(\frac{\eta L_f+\sqrt{\eta^2L_f^2+4\eta L_f}}{2},1)$, where $\epsilon_{i_0}$ is given by
\begin{align}\label{eqn:proprivacy}
	\epsilon_{i_0}=\frac{\tau_{i_0} q_{i_0}^2\delta}{q_{i_0}^2-\eta L_f-q_{i_0}\eta L_f},~~~~\forall i_0\in\mathcal{V}
\end{align}
with $\tau_{i_0}=\frac{\eta}{s_{\xi_{x_{i_0}}}}+\frac{1}{s_{\xi_{y_{i_0}}}}$.
\end{proposition}
\begin{proof}
	The proof can be obtained in the same way as the proof of Theorem~2 in\cite{xie2023compressed}.
\end{proof}

Proposition~\ref{prop:privacy} shows that PGTC ensures $\epsilon$-differential privacy even as time goes to infinity but needs a strict assumption (Assumption~\ref{as:distance1}). From~\eqref{eqn:proprivacy}, the privacy budget $
\epsilon_{i_0}$ can be arbitrarily chosen by setting specific values for parameters $s_{\xi_{x_{i_0}}}$ and $s_{\xi_{y_{i_0}}}$. However, higher levels of privacy also imply worse convergence accuracy. To relax the restrictions of Assumption~\ref{as:distance1}, we consider the following more general assumption.

\begin{assumption}\label{as:distance}
The gradient of all local cost functions are bounded, i.e., there exists a positive constant~$\mathbb{M}$ such that~$\Vert\nabla f_i(x)\Vert\leq M$, $\forall~i\in\mathcal{N},~x\in\R^d$.
\end{assumption}
\begin{remark}
    Assumption~\ref{as:distance} is very common used in privacy-preserving distributed optimization problem, e.g.,~\cite{huang2015differentially,zhu2018differentially,chen2023differentially}. It is useful for analyzing differential privacy because it controls the differences between the gradients of adjacent cost functions. This is also the reason why it can be used to relax Assumption~\ref{as:distance1}.
\end{remark}

\begin{theorem}\label{theo:privacy}
	Suppose Assumption~\ref{as:distance} holds, given a finite number of iterations $K$, PGTC preserves the $\epsilon_{i_0}$-differential privacy for any agent $i_0$'s cost function if the parameters satisfy
\begin{align*}	4\sqrt{d}M\sum_{k=0}^K\left(\frac{\sqrt{\eta}}{s_{\xi_{x_{i_0}}}q_{i_0}^k}+\frac{1}{s_{\xi_{y_{i_0}}}q_{i_0}^k}\right)\leq \epsilon_{i_0}.\addtag\label{eq:theo3}
\end{align*}
\end{theorem}
\begin{proof}
	See Appendix~\ref{app-privacy}.
\end{proof}

\begin{remark}
We would like to highlight that, unlike the privacy of the methods in~\cite{wang2022quantization,agarwal2018cpsgd}, which only work for specific compressors, PGTC is effective for a class of compressors. Furthermore, as previously discussed, compared to~\cite{ding2021differentially}, Theorem~\ref{theo:privacy} establishes weaker privacy but only requires mild assumptions. More specifically, the condition~\eqref{eq:theo3} is difficult to be satisfied when the iterations $K$ tends to infinity. In other words, under a weaker assumption (Assumption~\ref{as:distance}), the PGTC can only preserve the privacy within the interval $[0,K]$ for some finite iterations $K$.
\end{remark}
\subsection{Proof Sketch}
We then provide a proof sketch of Theorems~\ref{theo:convergence0} and~\ref{theo:convergence}. Unlike the ideal communication algorithm DiaDSP~\cite{ding2021differentially}, to establish the convergence of PGTC, we need to track the consensus errors and the extra compressed errors of the state $\mathbf{x}_k$ and the estimated gradient $\mathbf{y}_k$ using $\Theta_k$. To estimate those errors, we construct a set of linear inequalities, which are stated in Lemma~\ref{lemma:linearinequalities}. Recalling inequality~\eqref{eq:linearinequalities}, due to the accumulation of noise in the gradient tracking term, i.e.
\begin{align}\label{eq:gradienttracking}
   \by_k=\bbf(\x_k)+\sum_{t=0}^{k-1}\bar{\xi}_{y,t}.
\end{align}
We know that $\mathbb{E}[\left\Vert\bar{\mathbf{y}}_k\right\Vert^2]$ cannot decrease to zero. To estimate the optimization errors and distinguish the redundant parts $\sum_{t=0}^\infty\xi_{y,t}$, we combine the form $f(\bar{x}_k)-f^*$ and $L_f$-smooth, and use the Lyapunov function~\eqref{eqn:lyapunov} to achieve the convergence results in Theorems~\ref{theo:convergence0} and~\ref{theo:convergence}. Notice that since there are no assumptions of strongly convex or convex cost functions, we do not use the inequality property associated with convexity in the proofs. Furthermore, compared with~\cite{agarwal2018cpsgd,wang2022quantization}, to analyze linear convergence, we introduce the gradient tracking method and use constant stepsize in PGTC. However, the gradient tracking method leads to noise accumulation, as we stated before. Finally, the proof of Theorems~\ref{theo:convergence0} and~\ref{theo:convergence} are inspired by the proof of Theorems 4.3 and 4.4 in~\cite{zhao2022beer}. However, since this paper considers more general compressors and privacy, we need to analyze the different compression errors and noise for PGTC.

\section{Distributed Primal-Dual Algorithm with Compressed Communication}\label{sec:Algorithm1}
It can be seen from the PGTC algorithm that each agent should transmit two compressed variables in each iteration. To further improve communication efficiency, in this section, we provide the nonconvex differentially Private Primal-Dual algorithm under Compressed communication (PPDC), which is shown in Algorithm~\ref{Al:PPDC}. Compared with PGTC, each agent under PPDC only needs to transmit one compressed variable to its neighbors in each iteration. This means that PPDC consumes fewer communication resources. 
\subsection{Algorithm Description}
To solve the distributed nonconvex optimization problem~\ref{P1}, Yi~\textit{et~al.}~\cite{yi2021linear} proposed the following distributed primal-dual algorithm
\begin{align*}
&~x_{i,k+1}=x_{i,k}-\eta(\gamma\sum_{j=1}^n L_{ij}x_{j,k}+\omega  v_{i,k}+\lf_i(x_{i,k})),\addtag\label{iterationxp}\\
&~v_{i,k+1}=v_{i,k}+\eta\omega  \sum_{j=1}^n L_{ij}x_{j,k},\addtag\label{iterationv}
\end{align*}
where $\gamma ,~\omega $ are positive parameters, $\eta$ is stepsize, $L_{ij}$ is the $i$-th row and $j$-th column element of the Laplacian matrix $L$ and $v_{i,k}$ is the auxiliary variable of agent $i$. Similar to PGTC, to enable differential privacy, we propose the following algorithm
\begin{align*}
&~x_{i,k+1}=x_{i,k}+\xi_{x_i,k}-\eta(\gamma \sum_{j=1}^n L_{ij}(x_{j,k}+\xi_{x_j,k})+\omega  v_{i,k}+\lf_i(x_{i,k})),\addtag\label{iterationxp1}\\
&~v_{i,k+1}=v_{i,k}+\xi_{v_i,k}+\eta\omega\sum_{j=1}^n L_{ij}(x_{j,k}+\xi_{x_j,k}),\addtag\label{iterationv1}
\end{align*}
with $\xi_{x_i,k}$ and $\xi_{v_i,k}$ are Laplace noises. Similar to the PGTC, we set $\xi_{x_i,k}\sim\text{Lap}_d(s_{\xi_{x_i}}q_i^k)$ and $\xi_{v_i,k}\sim\text{Lap}_d(s_{\xi_{v_i}}q_i^k)$, $\forall i\in\mathcal{V}$, where $s_{\xi_{x_i}}>0,~s_{\xi_{v_i}}>0$, and $0<q_i<1$. Although $v_{i,k}$ is not transmitted directly to the neighbors of agent $i$, both noises $\xi_{x_i,k}$ and $\xi_{v_i,k}$ are needed to enable differential privacy. Specifically, according to Definition~\ref{def:differentialprivacy}, differential privacy requires that the observations under any two adjacent function sets are the same with some positive probability. This means that we need to mask the gradient changes with noises. Similarly, to ensure that the noise is sufficient to protect privacy, we add noise $\xi_{v_i,k}$ to the dual variable $v_{i,k}$ even if it does not need to be transmitted. To improve communication efficiency, we use the compressed variable $\hat{x}_{i,k}$ to replace $x_{i,k}^a$. The updates for agent $i\in\mathcal{V}$ can be designed as follows:
\begin{align*}
&~x_{i,k+1}=x_{i,k}+\xi_{x_i,k}-\eta(\gamma \sum_{j=1}^n L_{ij}\hat{x}_{j,k}+\omega  v_{i,k}+\lf_i(x_{i,k})),\addtag\label{eq:iterationxp2}\\
&~v_{i,k+1}=v_{i,k}+\xi_{v_i,k}+\eta\omega \sum_{j=1}^n L_{ij}\hat{x}_{j,k},\addtag\label{eq:iterationv2}
\end{align*}
where $\hat{x}_{j,k}$ are given in~\eqref{citerationx}.

\begin{algorithm}[]
	\caption{PPDC Algorithm }
	\label{Al:PPDC}
	\begin{algorithmic}[1]
		\STATE \textbf{Input:} Stopping time $K$, adjacency matrix $W$, and positive parameters $\eta$, $\gamma$, $\omega$, $\alpha_x$, $s_{\xi_{x_i}}$, $s_{\xi_{v_i}}$, $q_i$, $\forall i\in\mathcal{V}$.
		\STATE \textbf{Initialization:} Each ~$i\in\mathcal{V}$ chooses arbitrarily $x_i(0)\in\mathbb{R}^d$, $x^c_i(0)=\bf{0}$, $v_i(0)=\bf{0}$.
		\FOR{$k=0,1,\dots,K-1$}
		\FOR {for $i\in\mathcal{V}$ in parallel} 
		\STATE Generate Laplace noises $\xi_{x_i,k}\sim\text{Lap}_d(s_{\xi_{x_i}}q_i^k)$ and $\xi_{v_i,k}\sim\text{Lap}_d(s_{\xi_{v_i}}q_i^k)$.
		\STATE Compute $C(x_{i,k}+\xi_{x_i,k}-x_{i,k}^c)$ and broadcast them to its neighbors $\mathcal{N}_i$.
		\STATE Receive $C(x_{j,k}+\xi_{x_j,k}-x_{j,k}^c)$from $j\in\mathcal{N}_i$.	
		\STATE Update $x_{i,k+1}$ and $v_{i,k+1}$ from~\eqref{eq:iterationxp2} and~\eqref{eq:iterationv2}, respectively.
		\STATE Update $x_{i,k}^c$ from~\eqref{citerationxc}.
		\ENDFOR
		\ENDFOR
		\STATE \textbf{Output:} \{$x_{i,k}$\}.
	\end{algorithmic}
\end{algorithm}

\subsection{Convergence Analysis of PPDC}
In this section, we first show the convergence of PPDC with and without P--L condition.

\begin{theorem}\label{theo:convergence1}
	Suppose Assumptions~\ref{as:smooth}--\ref{as:finite} and \ref{as:strongconnected}--\ref{as:compressor} hold, under Algorithm~\ref{Al:PPDC}, if~$\gamma=\tilde{\zeta}_1\omega,~\omega>\tilde{\zeta}_2$, $\alpha_x\in(0,\frac{1}{r})$, and $0<\eta<\tilde{\zeta}_3$, for any $T\in\mathbb{N}$, it holds that 
 \begin{align*}
    \frac{1}{T}\sum_{k=0}^T\big(\mathbb{E}\Vert\x_k-\bx_k\Vert^2&+\mathbb{E}\Vert \nabla f(\bar{x}_k)\Vert^2\big)\leq \frac{\bar{\kappa}_3M_2}{T}+{\check{\kappa}_1\mathbb{E}\Vert\sum_{t=0}^{\infty}\bar{\xi}_{v,t}\Vert^2},\addtag\label{eq:theo2}
 \end{align*}
 where $\tilde{\zeta}_1,\tilde{\zeta}_2,\tilde{\zeta}_3,\bar{\kappa}_3,\check{\kappa}_1$ and $M_2$ are positive constants given in Appendix~\ref{app-convergence1}.
\end{theorem}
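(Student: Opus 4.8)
The plan is to mirror the Lyapunov argument developed for PGTC, adapted to the primal--dual structure. First I would rewrite \eqref{eq:iterationxp2}--\eqref{eq:iterationv2} in the compact form
\begin{align*}
\x_{k+1}&=\x_k+\nxk-\eta\big(\gamma\bl\hx_k+\omega\vv_k+\f(\x_k)\big),\\
\vv_{k+1}&=\vv_k+\nvk+\eta\omega\bl\hx_k,
\end{align*}
where $\bl=L\otimes I_d$ and $\hx_k$ is assembled from \eqref{citerationx}. Because the graph Laplacian satisfies $\mathbf{1}_n^\top L=\mathbf{0}_n^\top$, the averaged dual recursion collapses to $\bar{v}_{k+1}=\bar{v}_k+\bar{\xi}_{v,k}$, and since $v_{i,0}=\mathbf{0}$ this yields $\bar{v}_k=\sum_{t=0}^{k-1}\bar{\xi}_{v,t}$. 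This identifies the accumulated dual noise as the single nonvanishing driving term and explains why $\frac{1}{n}\E\Vert\sum_{t=0}^\infty\bar{\xi}_{v,t}\Vert^2$ must survive in \eqref{eq:theo2}; the averaged primal recursion is then $\bar{x}_{k+1}=\bar{x}_k+\bar{\xi}_{x,k}-\eta\omega\bar{v}_k-\frac{\eta}{n}(\mathbf{1}_n^\top\otimes I_d)\f(\x_k)$.

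Next I would build a matrix inequality in the spirit of Lemma~\ref{lemma:linearinequalities}, now tracking the primal consensus error $\oxk=\Vert\x_k-\bx_k\Vert^2$, a dual error $\Vert\vv_k-\bv_k\Vert^2$ measuring the component of $\vv_k$ outside the accumulated-noise direction, and the single compression error $\osxk=\Vert\sxk\Vert^2$. The compressor is controlled through Assumption~\ref{as:compressor} and \eqref{eq:propertyofcompressors1}, while the spectral gap of $W$ (equivalently of $L$) from Assumption~\ref{as:strongconnected} provides the contraction of the consensus directions. Stacking these gives $\E[\tilde\Theta_{k+1}]\preceq\tilde G\,\E[\tilde\Theta_k]+(\text{gradient and noise terms})$ for a nonnegative matrix $\tilde G$ whose entries depend on $\gamma,\omega,\eta,\alpha_x$.

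For optimality in the nonconvex regime I would apply the descent inequality \eqref{eqn:smooth1} to $f(\bar{x}_k)$ along the averaged primal recursion above, bounding the gradient-averaging mismatch by $\Vert\frac{1}{n}(\mathbf{1}_n^\top\otimes I_d)\f(\x_k)-\lf(\bar{x}_k)\Vert\le\frac{L_f}{\sqrt{n}}\Vert\x_k-\bx_k\Vert$ via $L_f$-smoothness. Completing the square produces a $-c\eta\Vert\lf(\bar{x}_k)\Vert^2$ term together with cross terms that are absorbed by $\oxk$, the dual error, and $\osxk$. I then form a Lyapunov function $V_k=\E[f(\bar{x}_k)]-f^*+\tilde{\mathbf{s}}^\top\E[\tilde\Theta_k]$ and choose the weights $\tilde{\mathbf{s}}$ together with $\gamma=\tilde\zeta_1\omega$, $\omega>\tilde\zeta_2$, and $\eta<\tilde\zeta_3$ so that the coupled system yields a genuine one-step decrease
\begin{align*}
\E[V_{k+1}]\le\E[V_k]-c\eta\big(\E\Vert\lf(\bar{x}_k)\Vert^2+\E\oxk\big)+(\text{noise}),
\end{align*}
i.e.\ so that $\tilde G$ augmented by the descent weights is Schur stable. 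Summing over $k=0,\dots,T$, telescoping, dividing by $T$, and collecting the accumulated dual noise into the last term of \eqref{eq:theo2} then gives the claim, with $\bar\kappa_3,\kappa_{12},M_2$ the constants deferred to Appendix~\ref{app-convergence1}.

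The hard part will be the joint parameter selection in the third step. Unlike gradient tracking, the dual variable $\vv_k$ is not contractive by itself; the only stabilizing damping of the consensus and dual errors enters through the coupling term $\omega\bl$ between the primal and dual updates. Exhibiting a single triple $(\gamma,\omega,\eta)$ under the stated ordering that simultaneously contracts the primal consensus error, the dual error, and the compression error while keeping a strictly negative coefficient on $\Vert\lf(\bar{x}_k)\Vert^2$ demands a careful small-gain / Schur-stability analysis of $\tilde G$, and a clean separation of the irreducible accumulated noise $\sum_t\bar{\xi}_{v,t}$ from the decaying transients.
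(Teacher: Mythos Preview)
Your high-level plan---compact recursion, linear inequalities for the error terms, descent lemma on $f(\bar x_k)$, telescope---is sound, but the specific Lyapunov you propose will not close. The gap is in the dual error you track. You propose $\Vert\vv_k-\bv_k\Vert^2$, but this quantity does \emph{not} vanish at the noiseless fixed point: at consensus $x^\star$ the primal stationarity condition $\omega v_i^\star+\nabla f_i(x^\star)=0$ forces $v_i^\star=-\tfrac{1}{\omega}\nabla f_i(x^\star)$, which differ across agents, so $\Vert\vv^\star-\bv^\star\Vert^2>0$ in general. A linear matrix inequality built on this error cannot be driven to zero, and no choice of weights $\tilde{\mathbf s}$ will yield the one-step decrease you want.

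The paper instead tracks the \emph{shifted} dual error $\vv_k+\tfrac{1}{\omega}\g_k^b$ (with $\g_k^b=\nabla\tilde f(\bx_k)$), measured in the $\bp$-norm where $P$ is essentially the pseudoinverse of $L$ (so that $PL=K_n$). Equally important, the Lyapunov function is not diagonal: it contains an explicit cross term $\x_k^\top\bk\bp(\vv_k+\tfrac{1}{\omega}\g_k^b)$, and it is precisely this cross term that manufactures the damping of the dual variable via the identity $\bp\bl=\bk$. Your own diagnosis in the last paragraph is correct---the dual has no intrinsic contraction---but a diagonal $\tilde{\mathbf s}^\top\tilde\Theta_k$ cannot exploit the primal--dual coupling; you need the off-diagonal term (equivalently, a quadratic form with a nontrivial $P$-weighted inner product between primal and shifted dual). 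Once you adopt the paper's Lyapunov
\[
\widetilde V_k=\tfrac12\Vert\x_k\Vert_\bk^2+\tfrac12\Vert\vv_k+\tfrac{1}{\omega}\g_k^b\Vert^2_{(1+\gamma/\omega)\bp}+\x_k^\top\bk\bp(\vv_k+\tfrac{1}{\omega}\g_k^b)+\Vert\x_k+\nxk-\x_k^c\Vert^2+n(f(\bar x_k)-f^*),
\]
the rest of your plan (bounding each piece, verifying positivity of the $\kappa$-coefficients under $\gamma=\tilde\zeta_1\omega$, $\omega>\tilde\zeta_2$, $\eta<\tilde\zeta_3$, then telescoping) goes through as you describe.
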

\begin{proof}
	See Appendix~\ref{app-convergence1}.
\end{proof}
Similarly, we have the following result.
\begin{corollary}\label{coro1}
    Under the same assumptions and parameters in Theorem~\ref{theo:convergence1}. It holds that
\begin{align*}
\frac{1}{T}\sum_{k=0}^T\big(\mathbb{E}\Vert\x_k-\bx_k\Vert^2&+\mathbb{E}\Vert \nabla f(\bar{x}_k)\Vert^2\big)\leq \frac{\bar{\kappa}_3M_2}{T}+\frac{2d\check{\kappa}_1\bar{s}_{\xi,v}^2}{1-\bar{q}^2},
\end{align*}
where $\bar{s}_{\xi,v}=\max_i\{s_{\xi_{v_i}}\}$. 
\end{corollary}
\begin{remark}\label{remark:primal}
    Theorem~\ref{theo:convergence1} and Corollary~\ref{coro1} shows that PPDC converges to a neighborhood of a stationary point with the rate $\mathcal{O}(1/T)$ for general nonconvex cost functions, which is the same as PGTC. Furthermore, from Appendix~\ref{app-convergence1}, we have $\bar{\kappa}_3\geq\frac{4}{\eta}$ and $\kappa_{12}>\frac{\eta}{4}+2\eta\omega^2$. Since $\omega>\tilde{\zeta}_2>\tilde{\zeta}_4>1$, it holds that $n\check{\kappa}_1=\bar{\kappa}_3\kappa_{12}>9>4$. Recall Theorem~\ref{theo:convergence0}, compared the second term to the right side of~\eqref{eq:theo1} and~\eqref{eq:theo2}, it can be observed that PPDC seems to be more susceptible to noise than PGTC. As pointed out in~\cite{yi2021linear}, the primal-dual method is equivalent to the EXTRA algorithm proposed in~\cite{shi2015extra}. However, the EXTRA algorithm uses historical information to correct the difference between the local gradient and the global gradient. This implies that PPDC may accumulate additional noise compared with PGTC. Additionally, compared with PGTC, PPDC preserve stronger privacy, more details can be found in Remark~\ref{remark:privacy}.
\end{remark}
Then we provide the linear convergence of PPDC with the P--L condition.
\begin{theorem}\label{theo:convergence2}
	Suppose Assumption~\ref{as:smooth}--\ref{as:compressor} hold, under Algorithm~\ref{Al:PPDC}, if~$\gamma=\tilde{\zeta}_1\omega,~\omega>\tilde{\zeta}_2$, $\alpha_x\in(0,\frac{1}{r})$, and $0<\eta<\tilde{\zeta}_3$, we have
 \begin{align*}
     \E[\Vert\x_k-\bx_k\Vert^2&+n(f(\bar{x}_{k})-f^*)]\leq(1-\bar{\kappa}_4)^{k}M_3+\frac{2d\check{\kappa}_1\bar{s}_{\xi,v}^2}{1-\bar{q}^2},
 \end{align*}
 where $\tilde{\zeta}_1,\tilde{\zeta}_2,\tilde{\zeta}_3,\bar{\kappa}_4,\check{\kappa}_2$ and $M_3$ are positive constants given in Appendix~\ref{app-convergence2} with $0<\bar{\kappa}_4<1$.
\end{theorem}
\begin{proof}
	See Appendix~\ref{app-convergence2}.
\end{proof}

It is straightforward to see that PPDC linearly converges to a neighborhood of the optimum when the global cost function satisfies the P--L condition. By combining Theorems~\ref{theo:convergence1} and~\ref{theo:convergence2}, we know that PPDC has similar convergence property as PGTC. Furthermore, as we discussed before, PPDC requires fewer communication resources than PGTC. 
\subsection{Differential privacy}
In this section, we show that the differential privacy of all cost functions can be preserved under PPDC.
\begin{proposition}\label{propo:privacy1}
	Suppose Assumptions~\ref{as:smooth}--\ref{as:distance1} hold. PPDC preserves the $\epsilon_{i_0}$-differential privacy for any agent $i_0$'s cost function if $\eta<\frac{1}{2L_f}$ and $q_{i_0}\in(\frac{\eta L_f+\sqrt{\eta^2L_f^2+4\eta L_f}}{2},1)$, where $\epsilon_{i_0}$ is given by
	\begin{align}
	\epsilon_{i_0}=\frac{\tau_{i_0} q_{i_0}^2\delta}{q_{i_0}^2-\eta L_f-q_{i_0}\eta L_f},~~~~\forall i_0\in\mathcal{V}
	\end{align}
	with $\tau_{i_0}=\frac{1}{s_{\xi_{x_{i_0}}}}+\frac{1}{\eta s_{\xi_{v_{i_0}}}}$.
\end{proposition}
\begin{proof}
	The proof can be obtained in the same way as the proof of Theorem~2 in\cite{xie2023compressed}.
\end{proof}

Similarly, we use the same notation in~Theorem~\ref{theo:privacy} and provide the following theorem.
\begin{theorem}\label{theo:privacy1}
	Suppose Assumption~\ref{as:distance} holds, given a finite number of iterations $K$, PPDC preserves the $\epsilon_{i_0}$-differential privacy for any agent $i_0$'s cost function if the parameters satisfy
\begin{align*}
2\sqrt{d}M\sum_{k=0}^K\left(\frac{\sqrt{\eta}}{s_{\xi_{x_{i_0}}}q_{i_0}^k}+\frac{2}{\omega s_{\xi_{v_{i_0}}}q_{i_0}^k}\right)\leq \epsilon_{i_0}.\addtag\label{eq:theo4}
\end{align*}
\end{theorem}
\begin{proof}
	See Appendix~\ref{app-privacy1}.
\end{proof}
\begin{remark}\label{remark:privacy}
    Suppose the parameter of noise $s_{\xi_{v_i}}=s_{\xi_{y_i}}$, since $\omega>1$, it can be seen that the left side of~\eqref{eq:theo4} is less than or equal to the left side of~\eqref{eq:theo3}. This means that the condition~\eqref{eq:theo4} holds more easily than~\eqref{eq:theo3} for a given $\epsilon_{i_0}$. In other words, the privacy under PPDC is more stronger than PGTC under the same noises parameters.
\end{remark}
\subsection{Proof Sketch}
We then provide the proof sketch of Theorems~\ref{theo:convergence1} and~\ref{theo:convergence2}. Similar to the proof of PGTC, we track the compressed errors and consensus errors of state $\bx_k$ and dual state $\bv_k$ by auxiliary function $\widetilde{V}_{k}$, which is defined in Appendix~\ref{app-convergence1}. To estimate those errors, we construct a linear inequality of $\widetilde{V}_{k}$, which is stated in Lemma~\ref{lemma:lyapunov}. Notice that noises also accumulate on the dual variable, i.e.
\begin{align}\label{eq:dual}   \bv_{k+1}=\bv_k+\sum_{t=0}^{k}\bar{\xi}_{v,t}.
\end{align}

Similar to PGTC, we distinguish the redundant parts $\sum_{t=0}^\infty\xi_{v,t}$. Then we show that the errors actually decrease, and provide some sufficient parameters leading to the claimed convergence results in Theorems~\ref{theo:convergence1} and~\ref{theo:convergence2}. Finally, the proofs of Theorems~\ref{theo:convergence1} and~\ref{theo:convergence2} are inspired by the proofs of Theorems 1 and 2 in~\cite{yi2022communication}. However, since this paper considers the privacy, we need to analyze the noise and deal with the redundant parts~\eqref{eq:dual}. 


\section{simulation}\label{simulation}
\begin{center}
	\begin{figure}
		\center
		\usetikzlibrary{positioning, fit, calc}

\begin{tikzpicture}[->,>=stealth',shorten >=1pt,auto,node distance=3.5cm,
thick,main node/.style={circle,fill=gray!20,draw,font=\sffamily\normalsize\bfseries}]

\node[main node] (1) {1};
\node[main node] (2) [ right of=1] {2};
\node[main node] (3) [ below right of=2] {3};
\node[main node] (4) [ below left of=3] {4};
\node[main node] (5) [ left of=4] {5};
\node[main node] (6) [ below left of=1] {6};

%
%

\path[every node/.style={font=\sffamily\small}]
(1) edge  node[] {} (4)
edge  node[left] {} (2)	  
edge   node[above]{} (6)
(2) edge  node[left] {} (5)
edge  node[above]{} (1)
edge  node[left] {} (3)
(3) edge  node[left] {} (4)
edge  node[left] {} (2)
(4) edge  node[above]{} (3)
edge  node[above]{} (1)	
edge  node[left] {} (5)
(5) edge  node[left] {} (2)
edge node[left] {} (4)
edge node[left] {} (6)
(6) edge node[above]{} (5)
edge node[above]{} (1);
\end{tikzpicture}
		\caption{A connected undirected graph consisting of 6 agents.}
		\label{Fig:graph}
	\end{figure}
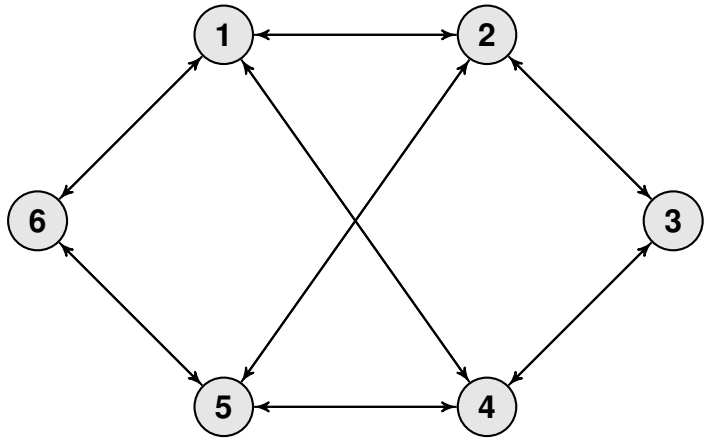
\end{center}
In this section, simulations are given to verify the validity of PGTC and PPDC. We first consider the following three compressors:
\begin{itemize}
	\item Greedy (Top-$k$) quantizer~\cite{beznosikov2020biased}:
	\begin{align*}
		C_1(x):=\sum_{i_s=1}^kx_{(i_s)}e_{i_s},
	\end{align*}
where $x_{(i_s)}$ is the $i_s$-th coordinate of $x$ with $i_1,\dots,i_k$ being the indices of the largest $k$ coordinates in magnitude of $x$, and $e_1,\dots,e_d$ are the standard unit basis vectors in $\R^d$.
    \item Biased $b$-bits quantizer~\cite{koloskova2019decentralized}:
    \begin{align*}
    	C_2(x):=\frac{\Vert x\Vert}{\xi}\cdot \text{sign}(x)\cdot 2^{-(b-1)}\circ \left\lfloor\frac{2^{(b-1)}\vert x\vert}{\Vert x\Vert} +u\right\rfloor,
    \end{align*}
where $\xi=1+\min\{\frac{d}{2^{2(b-1)}},\frac{\sqrt{d}}{2^{(b-1)}}\}$, $u$ is a random dithering vector uniformly sampled from $[0,1]^d$, $\circ$ is the Hadamard product, and $\text{sign}(\cdot)$, $|\cdot|$, $\lfloor\cdot\rfloor$ are the element-wise sign, absolute and floor functions, respectively. 
\item Norm-sign compressor~\cite{yi2022communication}:
    \begin{align*}
    	C_3(x):=\frac{\Vert x\Vert_\infty}{2}\text{sign} (x).
    \end{align*}

\end{itemize}	
As pointed out in~\cite{koloskova2019decentralized}, all of the above three compressors satisfy Assumption~\ref{as:compressor}. Specifically, we choose $k=2$ and $b=2$ in the following simulations. 

We then consider two distributed nonconvex optimization problem with $n=6$ agents and they communicate on a connected undirected graph, whose topology is shown in Fig.~\ref{Fig:graph}. Specifically, we firstly assume the agents aims to slove the following nonconvex distributed binary classification problem~\cite{antoniadis2011penalized,yi2021linear,sun2019distributed}
\begin{align*}
	&~~~~~\min_{x}f(x)=\frac{1}{6}\sum_{i=1}^6 f_i,\\\addtag\label{eq:simulation0}
 &f_i\left(x_i\right)=\frac{1}{m} \sum_{j=1}^m \log \left(1+\exp \left(-u_{i j} x_i^{\top} v_{i j}\right)\right)+\sum_{s=1}^d \frac{\lambda \alpha x_{i, s}^2}{1+\alpha x_{i, s}^2}.
\end{align*}
where $v_{ij}\in\R^d$ is feature vector and randomly generated with standard Gaussian distribution $N(0,1)$, $u_{ij}\in\{-1,1\}$ is the label and randomly generated with uniformly distributed pseudorandom integers taking the values $\{-1,1\}$ and $x_{i, s}$ is the $s$-th coordinate of $x_i$. Specifically, we assume $\lambda=0.001,\alpha=1,m=200$ and the initial value of each agent $x_i(0)$ is randomly chosen in $[0,1]^{10}$. 
\begin{figure}[htbp]
	\centering
	\includegraphics[width=0.75\linewidth]{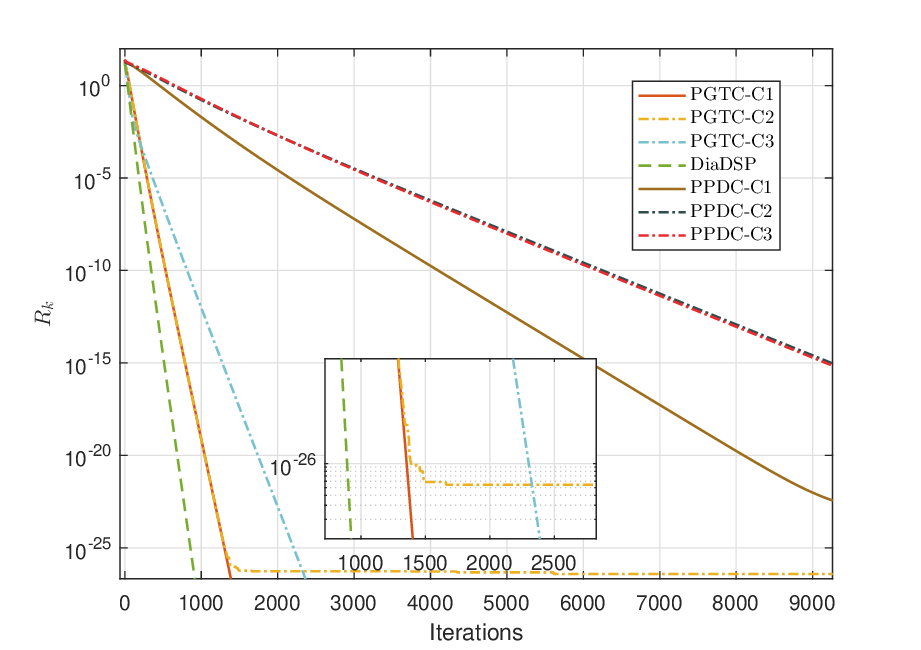}
	\caption{The evolution $R_k$ of residual under PGTC, PPDC, uncompressed method~DiaDSP for distributed binary classification problem~\eqref{eq:simulation0}.}
	\label{fig:convergence1}
\end{figure}
    \begin{table*}[htbp]
	\centering
	\resizebox{.7\linewidth}{!}{\begin{tabular}{lc c c c c c c c}\hline
		Algorithm & Compressor & $\gamma$ & $\omega$ & $\eta$ & $s_{\xi}$ & $q$ & $\alpha_x$ & $\alpha_y$\\\hline
		PGTC-C1 &$C_1$&0.2&---&0.1&100&0.1&0.5&0.5\\
		PGTC-C2 &$C_2$&0.2&---&0.1&100&0.1&0.5&0.5\\
		PGTC-C3 &$C_3$&0.1&---&0.15&100&0.1&0.5&0.5\\
		DiaDSP &---&---&---&0.15&100&0.1&---&---\\
  	PPDC-C1 &$C_1$&45&5&0.015&100&0.1&0.2&---\\
		PPDC-C2 &$C_2$&45&5&0.01&100&0.1&0.2&---\\
		PPDC-C3 &$C_3$&25&5&0.01&100&0.1&0.2&---\\\hline
	\end{tabular}}
	\caption{Parameter setting for different algorithms.}
	\label{tab:parameter}
\end{table*}

\begin{figure}[htbp]
	\centering
	\includegraphics[width=0.75\linewidth]{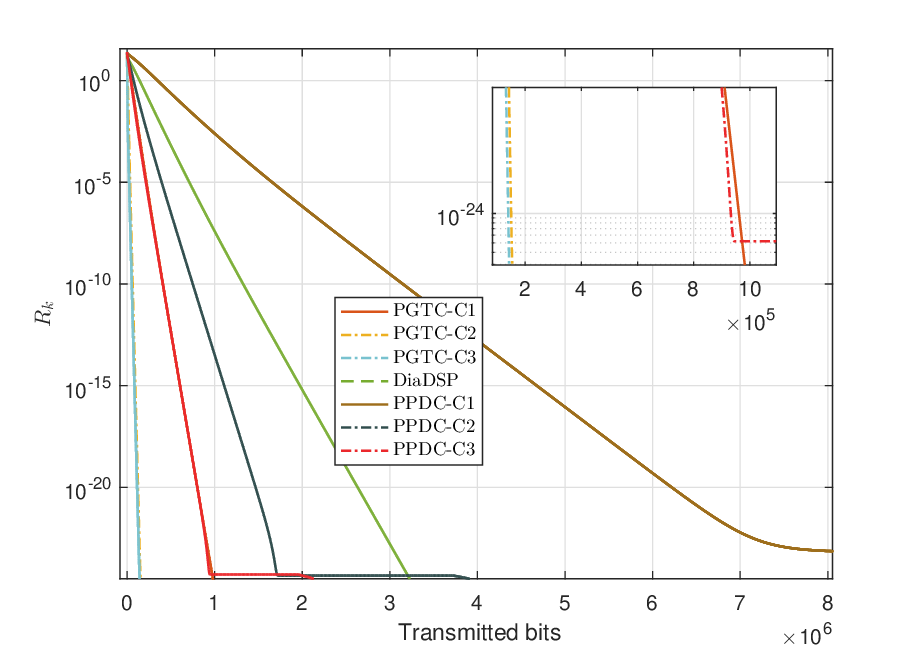}
	\caption{The evolution $R_k$ of residual with respect to the transmitted bits under PGTC, PPDC, uncompressed method~DiaDSP for distributed binary classification problem~\eqref{eq:simulation0}.}
	\label{fig:bit1}
\end{figure}
\begin{figure}[htbp]
	\centering
	\includegraphics[width=0.75\linewidth]{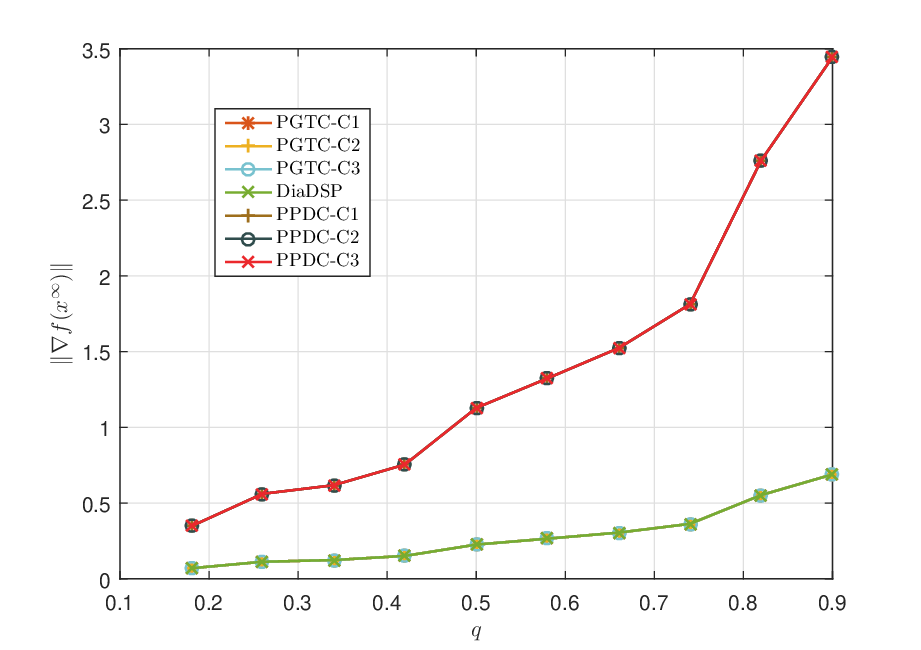}
	\caption{Effect of noise decaying rate on convergence accuracy for distributed binary classification problem~\eqref{eq:simulation0}.}
	\label{fig:accuracy0}
\end{figure}
\begin{figure}[htbp]
	\centering
	\includegraphics[width=0.75\linewidth]{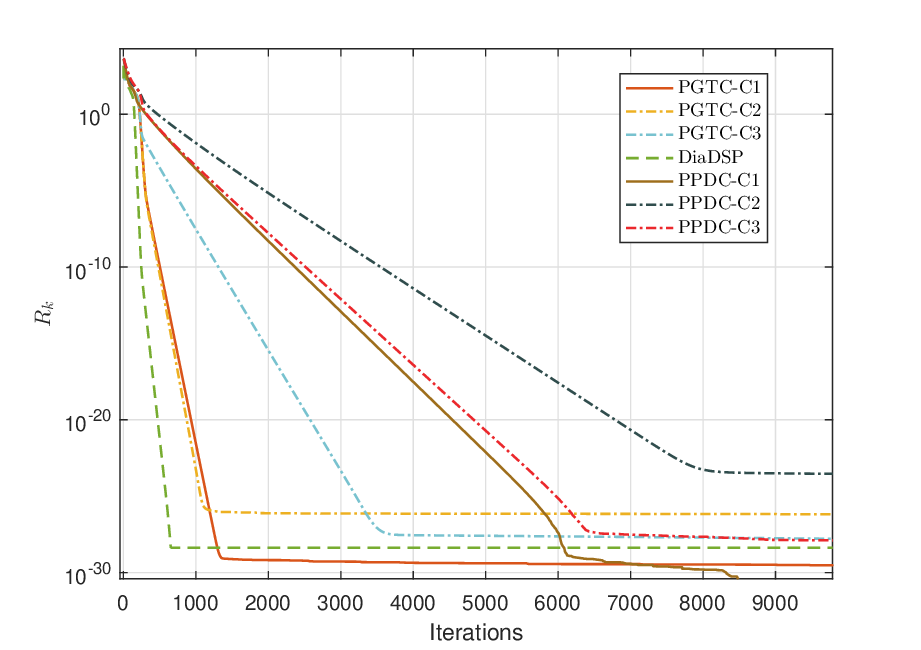}
	\caption{The evolution $R_k$ of residual under PGTC, PPDC, uncompressed method~DiaDSP for distributed nonconvex optimization problem~\eqref{eq:simulation}.}
	\label{fig:convergence2}
\end{figure}
We conduct experiments to verify the convergence rate of PGTC and PPDC using different compressors. The parameters are set as specified in TABLE~\ref{tab:parameter}, except for $s_{\xi}=0.1$ and $q=0.2$, which are consistent across all agents ($s_{\xi_{x_i}}=s_{\xi_{y_i}}=s_{\xi_{v_i}}=s_{\xi}$ and $q_i=q$ for all $i\in\mathcal{V}$). To evaluate the convergence, we compute the residual defined as $R_k\triangleq\min_{t\leq k}\Vert\x_t-\x^\infty\Vert^2$, where $\x^\infty$ is the convergence point. Fig.~\ref{fig:convergence1} shows that $\x_k$ linearly converges to the point $\x^\infty$ under PGTC and PPDC with different constant stepsizes and compressors. Furthermore, the convergence rate of PGTC can closely match that of DiaDSP~\cite{ding2021differentially} with suitable parameters and compressors. Fig.~\ref{fig:bit1} illustrates that, compared with DiaDSP, most of our algorithms require less number of bits. This means our methods are more efficient. We then simulate the effect of the noise decaying rate on convergence accuracy. We use $\Vert \lf(x^\infty)\Vert$ to measure the convergence accuracy of different algorithms. We set $s_\xi=0.1$ and other parameters are the same as TABLE~\ref{tab:parameter}. The relation between accuracy and decaying rate $q$ is shown in Fig.~\ref{fig:accuracy0}, where $q=0.18,0.26,0.34,0.42,0.5,0.58,0.66,0.74,0.82,0.9$. It can be seen that the accuracy of PGTC is nearly the same as that of DiaDSP and the accuracy is only noise dependent and not related to stepsize, $\gamma$, and compressors. Compared with PGTC, it can also be seen that the convergence accuracy of PPDC is more affected by noise.

We further consider the following nonconvex problem~\cite{karimi2016linear}
\begin{align*}
	&~~~~~\min_{x}f(x)=\frac{1}{6}\sum_{i=1}^6 f_i,\\\addtag\label{eq:simulation}
 &f_i(x)=x^\top x+3sin(x)^\top sin(x)+m_ix^\top cos(x),
\end{align*}
where $m_i\in\R$ is constant. In this example, the parameter $m_i$ is randomly generated and such that $\sum_{i=1}^6m_i=0,~m_i\neq0,~\forall i\in\mathcal{V}$. The initial value of each agent $x_i(0)$ is randomly chosen in $[0,1]^{10}$.

\begin{figure}[t]
	\centering
	\includegraphics[width=0.75\linewidth]{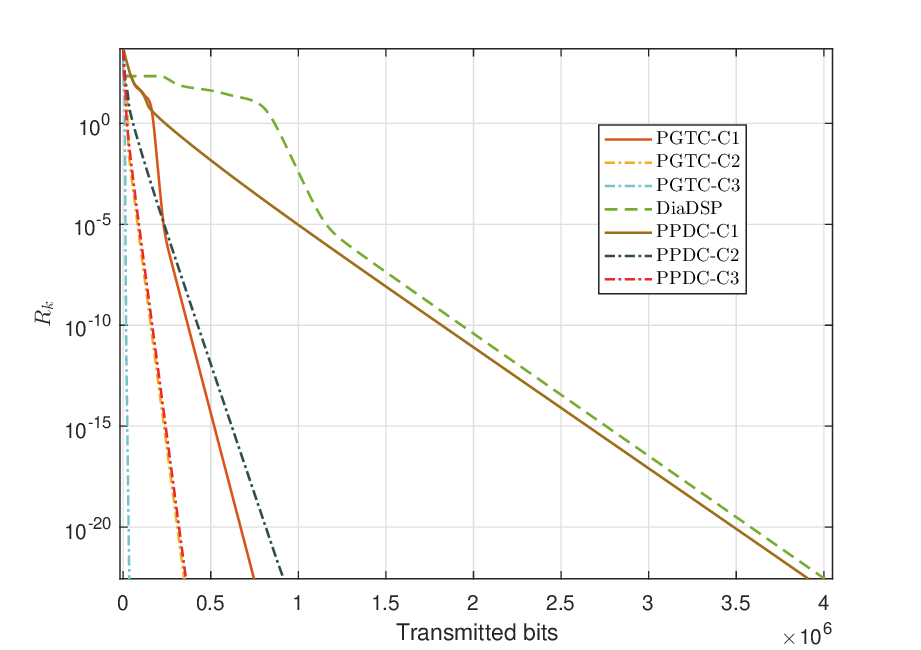}
	\caption{The evolution $R_k$ of residual with respect to the transmitted bits under PGTC, PPDC, uncompressed method~DiaDSP for distributed nonconvex optimization problem~\eqref{eq:simulation}.}
	\label{fig:bit2}
\end{figure}
\begin{figure}[htbp]
	\centering
	\includegraphics[width=0.75\linewidth]{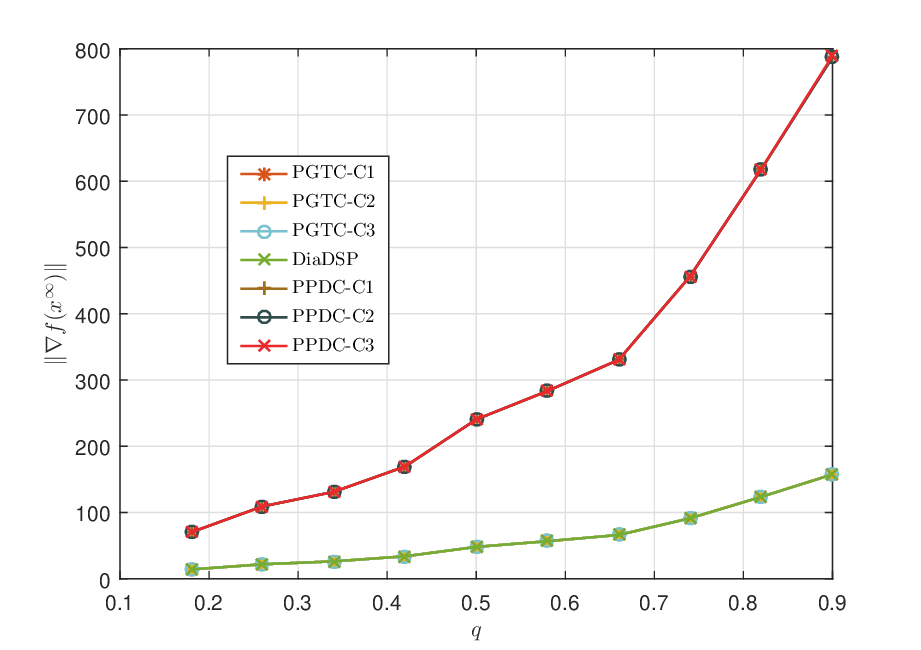}
	\caption{Effect of noise decaying rate on convergence accuracy for distributed nonconvex optimization problem~\eqref{eq:simulation}.}
	\label{fig:accuracy}
\end{figure}

Similar to Fig.~\ref{fig:convergence1}, we then verify the convergence rate of PGTC and PPDC with different compressors for distributed nonconvex optimization problem~\eqref{eq:simulation}, the parameters of different algorithms are given in TABLE~\ref{tab:parameter}. Fig.~\ref{fig:convergence2} shows that $\x_k$ linearly converges to the point $\x^\infty$ under PGTC and PPDC with different constant stepsize and compressors for problem~\eqref{eq:simulation}. As shown in Fig.~\ref{fig:convergence2}, due to the primal-dual method causes more noise redundancy~(details can be fond in Remark~\ref{remark:primal}), the PPDC is generally slower than PGTC. Fig.~\ref{fig:bit2} shows that compared with the ideal communication method, our algorithms converge to the same accuracy with much fewer bits transmitted. In addition, even from the perspective of transmitted bit, the PPDC still generally slower than PGTC. We then simulate the effect of the noise decaying rate on convergence accuracy for distributed nonconvex optimization problem~\eqref{eq:simulation}. Similar to the Fig.~\ref{fig:accuracy0}, let $s_\xi=5$ and other parameters be the same as TABLE~\ref{tab:parameter}. The relation between accuracy and decaying rate $q$ is shown in Fig.~\ref{fig:accuracy}. It can be seen that the accuracy of our methods is only noise dependent and PPDC is more susceptible to noise compared to PGTC. This also verifies the Remark~\ref{remark:primal}.

\section{conclusion}\label{conclusion}
In this paper, we investigated differentially private distributed nonconvex optimization under limited communication. Specifically, we proposed two algorithms under compressed communication. We established sublinear convergence for smooth (possibly nonconvex) cost functions and linear convergence when the global cost functions additionally satisfy the Polyak-Łojasiewicz condition  even for a general class of compressors with bounded relative compression error. Furthermore, we observed that the proposed algorithms achieve similar accuracy to the algorithm with idealized communication. Importantly, compared with existing literature, our proposed algorithms preserve a more rigorous $\epsilon$-differential privacy for the local cost function of each agent and are suitable for a general class of compressors. Future work includes extending the study to directed graphs and exploring the relationship between compressors and privacy performance.

\appendices
\section{The proof of Lemma~\ref{lemma:linearinequalities}}\label{app-convergence00}
\subsection{Supporting Lemmas}
We first introduce some useful vector and matrix inequalities.
\begin{lemma}\label{lemma:usefulinequalities}
For $u,v\in\R^d$, and $\forall s>0$ we have
\begin{align*}
	&~~~~~~u^\top v\leq \frac{s}{2}\Vert u\Vert^2+\frac{1}{2s}\Vert v\Vert^2,\addtag\label{eq:inequality1}\\
	&\Vert u+v\Vert^2\leq(1+s)\Vert u\Vert^2+(1+\frac{1}{s})\Vert v\Vert^2.\addtag\label{eq:inequality2}
\end{align*}
\end{lemma}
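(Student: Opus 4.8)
The statement to prove is Lemma~\ref{lemma:usefulinequalities}, consisting of two elementary inequalities for vectors $u,v\in\R^d$ and any $s>0$: a weighted Young-type bound on the inner product $u^\top v$, and the associated squared-norm expansion $\Vert u+v\Vert^2\leq(1+s)\Vert u\Vert^2+(1+\tfrac{1}{s})\Vert v\Vert^2$. These are standard auxiliary inequalities whose proofs rest on completing the square, so the plan is direct and computational rather than conceptual.

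The plan is to establish~\eqref{eq:inequality1} first and then derive~\eqref{eq:inequality2} as a consequence. For~\eqref{eq:inequality1}, I would start from the nonnegativity of a suitably scaled squared norm. Specifically, consider $\Vert \sqrt{s}\,u-\tfrac{1}{\sqrt{s}}\,v\Vert^2\geq 0$ for $s>0$; expanding gives $s\Vert u\Vert^2-2u^\top v+\tfrac{1}{s}\Vert v\Vert^2\geq 0$, which rearranges to $u^\top v\leq \tfrac{s}{2}\Vert u\Vert^2+\tfrac{1}{2s}\Vert v\Vert^2$. (I note the statement as typeset appears to omit the squares on the norms on the right-hand side of~\eqref{eq:inequality1}; the intended and correct inequality carries $\Vert u\Vert^2$ and $\Vert v\Vert^2$, consistent with its use in deriving~\eqref{eq:inequality2}.) This is the weighted Cauchy--Schwarz/Young inequality and follows in one line.

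For~\eqref{eq:inequality2}, I would expand the left-hand side directly:
\begin{align*}
\Vert u+v\Vert^2=\Vert u\Vert^2+2u^\top v+\Vert v\Vert^2.
\end{align*}
Applying the bound from~\eqref{eq:inequality1} to the cross term $2u^\top v\leq s\Vert u\Vert^2+\tfrac{1}{s}\Vert v\Vert^2$ and collecting coefficients yields
\begin{align*}
\Vert u+v\Vert^2\leq(1+s)\Vert u\Vert^2+\Bigl(1+\tfrac{1}{s}\Bigr)\Vert v\Vert^2,
\end{align*}
which is exactly the claim. Both parts are self-contained and require no assumptions from elsewhere in the paper.

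There is no substantive obstacle here: the only subtlety is bookkeeping in the choice of scaling factor $\sqrt{s}$ versus $1/\sqrt{s}$ so that the weights $(1+s)$ and $(1+1/s)$ come out correctly, and being careful that the norms on the right-hand side of~\eqref{eq:inequality1} are squared. The result is a routine pair of Young-type inequalities that will be invoked repeatedly in the convergence analysis to separate coupled error terms, so the value lies in stating them cleanly rather than in any difficulty of proof.
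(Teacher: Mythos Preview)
Your proposal is correct and is the standard completing-the-square argument; the paper itself does not supply a proof for this lemma, treating both inequalities as well-known supporting facts. Your observation that the right-hand side of~\eqref{eq:inequality1} should carry squared norms is also correct and consistent with how the inequality is applied throughout the convergence analysis.
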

\begin{lemma}\label{lemma:upperboundoflw}
	\cite{liao2022compressed} Suppose Assumption~\ref{as:strongconnected} holds. For~$\gamma\in(0,1]$ and any $\omega\in\R^{nd}$, we have $\left\Vert W_\gamma\omega-\bar{\omega}\right\Vert\leq\hat{\lambda}\left\Vert\omega-\bar{\omega}\right\Vert$, where $\hat{\lambda}=1-\gamma(1-\llw)$ with $\llw=\bar{\lambda}_{W-\frac{\mathbf{1}\mathbf{1}^\top}{n}}$.
	\end{lemma}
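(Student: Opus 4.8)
The plan is to reduce the claim to a bound on the operator norm of a single matrix and then read that norm off the spectrum of $W$. Writing $P\triangleq\frac{\mathbf{1}\mathbf{1}^\top}{n}\otimes I_d$, the notation of the excerpt gives $\bar\omega=P\omega$, and $P$ is a symmetric projection ($P^\top=P$, $P^2=P$). First I would use Assumption~\ref{as:strongconnected}: since $W$ is doubly stochastic, $W\mathbf{1}=\mathbf{1}$ and $\mathbf{1}^\top W=\mathbf{1}^\top$, from which one checks $W_\gamma P=PW_\gamma=P$, and in particular $W_\gamma\bar\omega=\bar\omega$. Because then $(W_\gamma-P)\bar\omega=\bar\omega-\bar\omega=0$, I can rewrite
\begin{align*}
W_\gamma\omega-\bar\omega=(W_\gamma-P)\omega=(W_\gamma-P)(\omega-\bar\omega),
\end{align*}
so the lemma follows once I show $\|W_\gamma-P\|\leq\hat{\lambda}$.

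Next I would compute that norm spectrally. Setting $M_\gamma\triangleq(1-\gamma)I_n+\gamma W$ so that $W_\gamma=M_\gamma\otimes I_d$, a direct expansion gives
\begin{align*}
W_\gamma-P=\Big(M_\gamma-\tfrac{\mathbf{1}\mathbf{1}^\top}{n}\Big)\otimes I_d=\Big((1-\gamma)\big(I_n-\tfrac{\mathbf{1}\mathbf{1}^\top}{n}\big)+\gamma\big(W-\tfrac{\mathbf{1}\mathbf{1}^\top}{n}\big)\Big)\otimes I_d.
\end{align*}
Since the graph is undirected, $W$ is symmetric, hence orthogonally diagonalizable with real eigenvalues $1=\lambda_1>\lambda_2\geq\cdots\geq\lambda_n$ (the strict gap $\lambda_2<1$ coming from connectedness) and eigenvector $\mathbf{1}$ for $\lambda_1$. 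In the eigenbasis of $W$, the matrix $M_\gamma-\frac{\mathbf{1}\mathbf{1}^\top}{n}$ is diagonal with eigenvalue $0$ along $\mathbf{1}$ and eigenvalues $1-\gamma(1-\lambda_i)$ for $i\geq2$; the Kronecker factor $I_d$ does not alter the spectrum, and symmetry lets me identify the operator norm with the spectral radius, so $\|W_\gamma-P\|=\max_{i\geq2}\,|1-\gamma(1-\lambda_i)|$.

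It then remains to show this maximum is at most $\hat{\lambda}=1-\gamma(1-\llw)$, where $\llw=\max_{i\geq2}|\lambda_i|$ is exactly the spectral radius of $W-\frac{\mathbf{1}\mathbf{1}^\top}{n}$. Because $\lambda\mapsto1-\gamma(1-\lambda)$ is increasing, over $\lambda\in[-\llw,\llw]$ the quantity $|1-\gamma(1-\lambda)|$ is maximized at an endpoint: the upper endpoint gives $1-\gamma(1-\llw)=\hat{\lambda}$, and the lower endpoint gives $|1-\gamma(1+\llw)|$. I expect the only genuine work is the lower endpoint: I must verify $|1-\gamma(1+\llw)|\leq\hat{\lambda}$, i.e. $1-\gamma(1+\llw)\geq-(1-\gamma(1-\llw))$, which simplifies to $\gamma\leq1$ — precisely the hypothesis $\gamma\in(0,1]$. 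This is where the stepsize restriction is essential, and it is also where connectedness is needed, since $\llw<1$ guarantees $\hat{\lambda}>0$ so that the estimate is a true contraction. Combining with the reduction of the first paragraph yields $\|W_\gamma\omega-\bar\omega\|=\|(W_\gamma-P)(\omega-\bar\omega)\|\leq\hat{\lambda}\|\omega-\bar\omega\|$. The main subtlety is not any one calculation but ensuring that the norm/spectral-radius identification is licensed by the symmetry of $W$ and that the sign analysis correctly pins the extremal eigenvalue of $M_\gamma-\frac{\mathbf{1}\mathbf{1}^\top}{n}$ to the negative end under $\gamma\leq1$.
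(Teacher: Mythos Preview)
Your argument is correct. Note, however, that the paper does not supply its own proof of this lemma: it is quoted verbatim from \cite{liao2022compressed} and used as a black box, so there is nothing in the paper to compare against. Your spectral computation---reducing to the operator norm of $W_\gamma-\frac{\mathbf{1}\mathbf{1}^\top}{n}\otimes I_d$, diagonalizing in the eigenbasis of the symmetric doubly-stochastic $W$, and handling the sign at the negative endpoint via $\gamma\le 1$---is exactly the standard proof one finds in the compressed/decentralized optimization literature for this mixing bound. One cosmetic remark: your closing sentence says connectedness is needed so that $\hat\lambda>0$ makes the estimate ``a true contraction''; what you actually use from connectedness is $\llw<1$, which ensures $\hat\lambda<1$ (strict contraction) rather than $\hat\lambda>0$. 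Nonnegativity of $\hat\lambda$ follows already from $\gamma\le 1$ and $\llw\ge 0$, and is what justifies the absolute-value inequality at the lower endpoint.
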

 \begin{lemma}
     Suppose an random variable $x\sim\text{Lap}(\theta)$, we have $\E[x^2]=2\theta^2$ and $\E[\vert x\vert]=\theta$.
 \end{lemma}
\subsection{The proof of Lemma~\ref{lemma:linearinequalities}}
Denote $\bh=\frac{1}{n}(\mathbf{1}_n\mathbf{1}_n^\top\otimes\bi_d)$, $\bk=K_n\otimes\bi_d=\bi_{nd}-H$. We then prove Lemma~\ref{lemma:linearinequalities} by constructing the upper bounds of $\mathbb{E}[\oo_{x,k+1}],~\mathbb{E}[\oo_{y,k+1}],~\mathbb{E}[\oo_{\sigma_x,k+1}]$, and $\mathbb{E}[\oo_{\sigma_y,k+1}]$, respectively.

(a) According to~\eqref{iterationx1}, we obtain 
\begin{align*}
	\E&[\oo_{x,k+1}]=\E[\Vert W_\gamma\x_k-\bx_k+W_\gamma\nxk-\nbxk\\
	&+\gamma(W-I_n)\otimes I_d(\hx_k-\x_k-\nxk)-\eta(\y_k-\by_k)\Vert^2]\\
	&\leq(1+s)(1-\gamma\rho)\E[\oxk]\!+\!(1+\frac{1}{s})(3\lwi^2\gamma^2r_0\E[\osxk]\\
	&+3\eta^2\E[\oyk]+3\hat{\lambda}^2\E[\Vert\nxk-\nbxk\Vert^2])\\
	&\leq(1-\frac{\gamma\rho}{2})\E[\oxk]+\frac{9\lwi^2\gamma r_0}{\rho}\E[\osxk]+\frac{9\eta^2}{\gamma\rho}\E[\oyk]+\mu_{1,k},\addtag\label{eq:boundofaveandstatex}
\end{align*}
where $\mu_{1,k}=\frac{9\hat{\lambda}^2}{\gamma\rho}\E[\Vert\nxk-\nbxk\Vert^2]$; the first inequality holds comes from~\eqref{eq:propertyofcompressors1},~\eqref{eq:inequality2}, and Lemma~\ref{lemma:upperboundoflw}, and denoting $\rho=1-\llw$; the second inequality holds by choosing $s=\gamma\rho/2$ and $\gamma\leq1$, $\rho\leq1$. Then we constructed the relationship between~$\E[\oo_{x,k+1}]$ and~$\E[\oo_{x,k}]$.

(b) From~\eqref{iterationy1}, we have
\begin{align*}
	\E&[\oo_{y,k+1}]=\E[\Vert W_\gamma\y_k-\by_k+W_\gamma\nyk-\nbyk\\
	&+\gamma(W-I_n)\otimes I_d(\hy_k-\y_k-\nyk)\\
	&+\bk(\f(\x_{k+1})-\f(\x_k))\Vert^2]\\
	&\leq (1-\frac{\gamma\rho}{2})\E[\oyk]+\frac{9\lwi^2\gamma r_0}{\rho}\E[\osyk]\\
	&+\frac{9L_f^2}{\gamma\rho}\E[\Vert\x_{k+1}-\x_k\Vert^2]+\frac{9\hat{\lambda}^2}{\gamma\rho}\E[\Vert\nyk-\nbyk\Vert^2],\addtag\label{eq:boundofy}
\end{align*}
where the first inequality due to~\eqref{eq:propertyofcompressors1}, $\bar{\lambda}_\bk=1$, Assumption~\ref{as:smooth}, and Lemma~\ref{lemma:upperboundoflw}. By~\eqref{iterationx}, it holds that
\begin{align*}
\E[&\Vert\x_{k+1}-\x_k\Vert^2]=\E[\Vert\nxk+\gamma(W-I_n)\otimes I_d\hx_k-\eta\y_k\Vert^2]\\
&\leq\E[\Vert\gamma(W-I_n)\otimes I_d(\hx_k-\x_k-\nxk)\\
&+\gamma(W-I_n)\otimes I_d(\x_k-\bx_k)+W_\gamma\nxk-\eta\y_k\Vert^2]\\
&\leq 4\gamma^2\lwi^2r_0\E[\osxk]+4\gamma^2\lwi^2\E[\oxk]\\
&+4\eta^2\E[\oyk]+4\eta^2\E[\Vert\by_k\Vert^2]+4\hat{\lambda}^2\E[\Vert\nxk\Vert^2].\addtag\label{eq:boundofstatex}
\end{align*}
where the second inequality holds due to~\eqref{eq:propertyofcompressors1} and the fact that $\Vert \y\Vert^2=\Vert \y-\by\Vert^2+\Vert\by\Vert^2$ for any vector $\y\in\R^{nd}$. Combining~\eqref{eq:boundofy}--\eqref{eq:boundofstatex}, one obtains that
\begin{align*}
\E[\oo&_{y,k+1}]\leq (1-\frac{\gamma\rho}{2}+\frac{36\eta^2L_f^2}{\gamma\rho})\E[\oyk]\\
&+\frac{9\lwi^2\gamma r_0}{\rho}\E[\osyk]+\frac{36\gamma\lwi^2 L_f^2}{\rho}\E[\oxk]\\
&+\frac{36\gamma\lwi^2 L_f^2}{\rho}\E[\osxk]+\frac{36\eta^2 L_f^2}{\gamma\rho}\E[\Vert\by_k\Vert^2]+\mu_{2,k},\addtag\label{eq:boundofaveandstatey}
\end{align*}
where $\mu_{2,k}=\frac{36\hat{\lambda}^2 L_f^2}{\gamma\rho}\E[\Vert\nxk\Vert^2]+\frac{9\hat{\lambda}^2}{\gamma\rho}\E[\Vert\nyk-\nbyk\Vert^2]$. Then we constructed the relationship between~$\E[\oo_{y,k+1}]$ and~$\E[\oo_{y,k}]$.

(c) We have
\begin{align*}
	\E[\oo&_{\sigma_x,k+1}]=\E[\Vert \x^c_{k+1}-\x_{k+1}-\nxkk\Vert^2]\\
	&=\E[\Vert \x_{k}-\x_{k+1}+\nxk-\nxkk+\x^c_k-\x_k-\nxk\\
	&+\alpha_xr\frac{1}{r} C(\x_k+\nxk-\x_k^c)\Vert^2]\\
	&\leq (1+s)(\alpha_xr(1-\varphi)+(1-\alpha_xr))\E[\osxk]\\
	&+(1+\frac{1}{s})(2\E[\Vert\x_{k+1}-\x_k\Vert^2]+2\E[\Vert\nxkk-\nxk\Vert^2])\\
	&\leq (1-\frac{\varphi_1}{2})\E[\osxk]+\frac{4}{\varphi_1}\E[\Vert\x_{k+1}-\x_k\Vert^2]\\
	&+\frac{4}{\varphi_1}\E[\Vert\nxkk-\nxk\Vert^2]\\
	&\leq(1-\frac{\varphi_1}{2}+\frac{16\gamma^2\lwi^2r_0}{\varphi_1})\E[\osxk]\\
	&+\frac{16\gamma^2\lwi^2}{\varphi_1}\E[\oxk]+\frac{16\eta^2}{\varphi_1}\E[\oyk]+\frac{16\eta^2}{\varphi_1}\E[\Vert\by_k\Vert^2]+\mu_{3,k},\addtag\label{eq:boundofsigmax}
\end{align*}
where $\mu_{3,k}=\frac{16\hat{\lambda}^2}{\varphi_1}\E[\Vert\nxk\Vert^2]+\frac{4}{\varphi_1}\E[\Vert\nxkk-\nxk\Vert^2]$; the second equality comes from~\eqref{citerationx} and~\eqref{citerationxc}; the first inequality comes from~Lemma~\ref{lemma:usefulinequalities} and Jensen's inequality; the second inequality follows by denoting $\varphi_1=\min\{\alpha_xr\varphi,\alpha_yr\varphi\}$, choosing $s=\frac{\varphi_1}{2-2\varphi_1}$, and $\alpha_xr<1$. Then we constructed the relationship between~$\E[\oo_{\sigma_x,k+1}]$ and~$\E[\oo_{\sigma_x,k}]$.

(d) Similar to~\eqref{eq:boundofsigmax}, we have
\begin{align*}
	\E[\oo&_{\sigma_y,k+1}]\leq (1-\frac{\varphi_1}{2})\E[\osyk]+\frac{4}{\varphi_1}\E[\Vert\y_{k+1}-\y_k\Vert^2]\\
	&+\frac{4}{\varphi_1}\E[\Vert\nykk-\nyk\Vert^2].\addtag\label{eq:boundofsigmay0}
\end{align*}
From~\eqref{iterationy}, it holds that
\begin{align*}
	\E[&\Vert\y_{k+1}-\y_k\Vert^2]=\E[\Vert\nyk+\gamma(W-I_n)\otimes I_d\hy_k\\
 &+\f(\x_{k+1})-\f(\x_k)\Vert^2]\\
&\leq\E[\Vert\gamma(W-I_n)\otimes I_d(\hy_k-\y_k-\nyk)\\
&+\gamma(W-I_n)\otimes I_d(\y_k-\by_k)+W_\gamma\nyk+\f(\x_{k+1})\\
&-\f(\x_k)\Vert^2]\\
&\leq 4\gamma^2\lwi^2r_0\E[\osyk]+(4\gamma^2\lwi^2+16\eta^2L_f^2)\E[\oyk]\\
&+16\gamma^2\lwi^2L_f^2r_0\E[\osxk]+16\gamma^2\lwi^2L_f^2\E[\oxk]\\
&+16\eta^2L_f^2\E[\Vert\by_k\Vert^2]+16\hat{\lambda}^2\E[\Vert\nxk\Vert^2]+4\hat{\lambda}^2\E[\Vert\nyk\Vert^2].\addtag\label{eq:boundofstatey}
\end{align*}
where the first inequality holds due to the fact that $(W-I_n)\otimes I_d\by=0$. Combining~\eqref{eq:boundofsigmay0}--\eqref{eq:boundofstatey}, one obtains that
\begin{align*}
	\E[\oo&_{\sigma_y,k+1}]\leq (1-\frac{\varphi_1}{2}+\frac{16\gamma^2\lwi^2r_0}{\varphi_1})\E[\osyk]\\
	&+\frac{16\gamma^2\lwi^2+64\eta^2L_f^2}{\varphi_1}\E[\oyk]
	\\&+\frac{64\gamma^2\lwi^2L_f^2r_0}{\varphi_1}\E[\osxk]+\frac{64\gamma^2\lwi^2L_f^2}{\varphi_1}\E[\oxk]\\
	&+\frac{64\eta^2L_f^2}{\varphi_1}\E[\Vert\by_k\Vert^2]+\mu_{4,k},\addtag\label{eq:boundofsigmay}
\end{align*}
where $\mu_{4,k}=\frac{64\lwi^2}{\varphi_1}\E[\Vert\nxk\Vert^2]+\frac{16\lwi^2}{\varphi_1}\E[\Vert\nyk\Vert^2]+\frac{4}{\varphi_1}\E[\Vert\nykk-\nyk\Vert^2]$. Then we construct the relationship between~$\E[\oo_{\sigma_y,k+1}]$ and~$\E[\oo_{\sigma_y,k}]$. Let $\mu_k\!\triangleq\![\mu_{1,k},\mu_{2,k},\mu_{3,k},\mu_{4,k}]^\top$\!, combining~\eqref{eq:boundofaveandstatex},~\eqref{eq:boundofaveandstatey},~\eqref{eq:boundofsigmax}, and~\eqref{eq:boundofsigmay}, we have
\begin{align*}
		\mathbb{E}[\Theta_{k+1}]\preceq G\mathbb{E}[\Theta_k]+\vartheta_1\E[\left\Vert\by_k\right\Vert]^2+\mu_k,
	\end{align*}
where the elements of the matrix $G\in\mathbb{R}^{4\times 4}$ and vectors $\vartheta_1,\in\R^{4}$ correspond to the coefficients in \eqref{eq:boundofaveandstatex},~\eqref{eq:boundofaveandstatey},~\eqref{eq:boundofsigmax}, and~\eqref{eq:boundofsigmay}. Since  $\xi_{x_i}\!\sim\text{Lap}_d(s_{\xi_{x_i}}q_i^k)$ and $\xi_{y_i}\sim\text{Lap}_d(s_{\xi_{y_i}}q_i^k)$, we have $\mu_k\preceq \vartheta_2 \bar{q}^{2k}\bar{s_\xi}^2$, where $\vartheta_2$ is given by
\begin{align}\label{parameters}
\begin{aligned}
\vartheta_2=\bigg\{&\frac{9\hat{\lambda}^2}{\gamma\rho},~\frac{36\hat{\lambda}^2 L_f^2}{\gamma\rho}+\frac{9\hat{\lambda}^2}{\gamma\rho},~\frac{16(\hat{\lambda}^2+1)}{\varphi_1},\\
&\frac{64\lwi^2+16(\lwi^2+1)}{\varphi_1} \bigg\}2nd.
\end{aligned}
\end{align}
Then we know that~\eqref{eq:linearinequalities} holds.

\section{The proof of Theorem~\ref{theo:convergence0}}\label{app-convergence0}
For simplicity of the proof, we also denote some notations.
\begin{align*}
    &\bar{\zeta}_1=\min\{\frac{1}{8\lwi\sqrt{r_0}},\frac{n}{2L_f(\frac{n}{2L_f^2}+930)},\frac{1}{\frac{2+18n}{L_f}+576},\\
    &~~~\frac{1}{\frac{2+72nr_0}{L_f}+576+1024r_0},\frac{1}{\frac{2}{L_f}+144r_0},\frac{1}{2\varphi_1+176}\},\\
    &\bar{\kappa}_1=\frac{4}{\eta},~M_1=\mathbb{E}[V_{0}]+\sum_{k=0}^{\infty}(b_1\bar{q}^{2k}\bar{s_\xi}^2),\\
    &b_1=\mathbf{s}^\top\vartheta_2+(\frac{1}{\eta}+L_f)n.
\end{align*}
We first construct a upper bound of $\E[f(\bar{x}_{k+1})]-f^*$.
\begin{align*}
	\E[&f(\bar{x}_{k+1})]-f^*\leq \\
	&\E[f(\bar{x}_k)]-f^*+\E[\lf(\bar{x}_k)^\top(\frac{1}{n}(\mathbf{1}_n^\top\otimes I_d)(\nxk\\
 &-\eta\y_k))]+\frac{L_f}{2}\E[\Vert\frac{1}{n}(\mathbf{1}_n^\top\otimes I_d)(\nxk-\eta\y_k)\Vert^2]\\
	&\leq \E[f(\bar{x}_k)]-f^*-\frac{\eta}{2}\E[\Vert\lf(\bar{x}_k)\Vert^2]-\frac{\eta}{2}\E[\Vert\bar{y}_k\Vert^2]\\
	&+\frac{\eta}{2}\E[\Vert\lf(\bar{x}_k)-\bar{y}\Vert^2]+\frac{\eta}{4}\E[\Vert\lf(\bar{x}_k)\Vert^2]\\
	&+\frac{1}{\eta}\E[\Vert\frac{1}{n}(\mathbf{1}_n^\top\otimes I_d)\nxk\Vert^2]\\
 &+\frac{L_f}{2}\E[\Vert\frac{1}{n}(\mathbf{1}_n^\top\otimes I_d)(\nxk-\eta\y_k)\Vert^2].\addtag\label{eq:boundofaveandoptimal}
\end{align*}
From~\eqref{iterationy1}, we introduce a key property of PGTC, i.e., for $k\geq0$,
\begin{align}\label{eq:sumofy}
	n\bar{y}_k=n\nabla\bar{f}(\mathbf{x}_k)+(\mathbf{1}_n^\top\otimes I_d)\sum_{t=0}^{k-1}\mathbf{\xi}_{y,t}.
\end{align}
Then~\eqref{eq:boundofaveandoptimal} can be rewritten as
\begin{align*}
	\E[&f(\bar{x}_{k+1})]-f^*\leq \E[f(\bar{x}_k)]-f^*-\frac{\eta}{4}\E[\Vert\lf(\bar{x}_k)\Vert^2]\\
	&-\frac{\eta}{2}\E[\Vert\bar{y}_k\Vert^2]+\frac{\eta}{2}\E[\Vert\lf(\bar{x}_k)-\nabla\bar{f}(\mathbf{x}_k)\\
	&-\frac{1}{n}(\mathbf{1}^\top\otimes I_d)\sum_{t=0}^{k-1}\mathbf{\xi}_{y,t}\Vert^2]+\frac{1}{\eta}\E[\Vert\frac{1}{n}(\mathbf{1}^\top\otimes I_d)\nxk\Vert^2]\\
 &+L_f\E[\Vert\frac{1}{n}(\mathbf{1}^\top\otimes I_d)\nxk\Vert^2]+\eta^2L_f\E[\Vert\bar{y}_k\Vert^2]\\
	&\leq \E[f(\bar{x}_k)]-f^*-\frac{\eta}{4}\E[\Vert\lf(\bar{x}_k)\Vert^2]\\
 &~~-\frac{1}{n}(\frac{\eta}{2}-\eta^2L_f)\E[\Vert\by_k\Vert^2]\\
	&~~+\frac{\eta L_f^2}{n}\E[\Vert\bx_k-\x_k\Vert^2]+\frac{\eta}{n}\E[\Vert\sum_{t=0}^{k-1}\mathbf{\xi}_{y,t}\Vert^2]\\
	&~~+\frac{1}{n}(\frac{1}{\eta}+L_f)\E[\Vert\nxk\Vert^2],\addtag\label{eq:boundofaveandoptimal0}
\end{align*}
where the second inequality holds due to Assumption~\ref{as:smooth} and Jensen's inequality. From~\eqref{eqn:lyapunov},~\eqref{eq:boundofaveandoptimal0}, and Lemma~\ref{lemma:linearinequalities}, we have
\begin{align*}
	\E[V_{k+1}]&\leq\E[V_k]-\frac{\eta}{4}\E[\Vert\lf(\bar{x}_k)\Vert^2]-\frac{\zeta_1L_f\eta}{2n}\E[\oxk]\\
 &+(\mathbf{s}^\top G-(1-\frac{\eta}{2})\mathbf{s}^\top+\mathbf{c}^\top)\E[\Theta_k]\\
	&-\frac{1}{n}(\frac{\eta}{2}-\eta^2L_f-\mathbf{s}^\top\vartheta_1)\E[\Vert\by_k\Vert^2]+b_1\bar{q}^{2k}\bar{s_\xi}^2\\
	&+\frac{\eta}{n}\E\left[\left\Vert\sum_{t=0}^k\xi_{y,t}\right\Vert^2\right]
\end{align*}
where
\begin{align*}
	&~\mathbf{c}=\begin{bmatrix}
		\frac{\eta L_f^2}{n}&0&0&0
	\end{bmatrix}.
\end{align*}
Then the Theorem~\ref{theo:convergence0} can be proved if there exists some positive constants $\zeta_1$--$\zeta_4$ such that the following inequalities hold.
\begin{align*}
	&((1-\frac{\eta}{2})I-G^\top)\mathbf{s}-\mathbf{c}\succeq\mathbf{0},\addtag\label{eq:verify1}\\
	&\frac{1}{n}(\frac{\eta}{2}-\eta^2L_f-\mathbf{s}^\top\vartheta_1)\geq0.\addtag\label{eq:verify2}
\end{align*}
Since $\zeta_\gamma=\zeta_\eta\leq\bar{\zeta}_1$, we have $\zeta_\gamma\leq\frac{1}{8\lwi\sqrt{r_0}}$, $\zeta_\eta\leq\frac{1}{2\varphi_1+176}<\frac{1}{12}$. From $\gamma=\zeta_\gamma\rho\varphi_1,~\eta=\zeta_\eta\gamma\rho^2/L_f$, and $\rho\leq1$, we have
\begin{align*}
   &1-\frac{\gamma\rho}{2}+\frac{36\eta^2L_f^2}{\gamma\rho}\leq  1-\frac{\gamma\rho}{4},\\
&1-\frac{\varphi_1}{2}+\frac{16\gamma^2\lwi^2r_0}{\varphi_1}<1-\frac{\varphi_1}{4}.
\end{align*}
Then~\eqref{eq:verify1}--\eqref{eq:verify2} can be transfer to the following inequality

\begin{align}\label{eq:matrixform1}
\tiny{
		\begin{bmatrix}
			\frac{\gamma\rho L_f}{2n}-\frac{\zeta_\eta\gamma\rho^2}{2n}&-\frac{36\lwi^2\gamma\rho L_f}{n}&-\frac{16\zeta_\gamma\lwi^2\gamma\rho L_f}{n}&-\frac{64\zeta_\gamma\lwi^2\gamma\rho^3 L_f}{n}\\
			-\frac{9\zeta_\eta^2\gamma\rho^3}{nL_f}&\frac{\gamma\rho^3}{4nL_f}-\frac{\zeta_\eta\gamma\rho^4}{2nL_f^2}&-\frac{16\zeta_\eta^2\zeta_\gamma\gamma\rho^5}{nL_f}&-\frac{16\zeta_\gamma\gamma\rho^3(\lwi^2+4\zeta_\eta^2\rho^4)}{nL_f}\\
			-\frac{9\zeta_\gamma\lwi^2r_0\varphi_1L_f}{n}&-\frac{36\zeta_\gamma\lwi^2\rho^2\varphi_1L_f}{n}&\frac{\varphi_1L_f}{4n}-\frac{\zeta_\eta\zeta_\gamma\rho^3\varphi_1}{2n}&-\frac{64\zeta_\gamma^2\lwi^2\rho^4 r_0\varphi_1L_f}{n}\\
			0&-\frac{9\zeta_\gamma\lwi^2r_0\varphi_1\rho^2}{nL_f}&0&\frac{\varphi_1\rho^2}{4nL_f}-\frac{\zeta_\eta\zeta_\gamma\rho^5\varphi_1}{2nL_f^2}\\
			0&-\frac{36\zeta_\eta^2\gamma\rho^5}{nL_f}&-\frac{16\zeta_\eta^2\zeta_\gamma\gamma\rho^5}{nL_f}&-\frac{36\zeta_\eta^2\zeta_\gamma\gamma\rho^7}{nL_f}
		\end{bmatrix}
		\begin{bmatrix}
			\zeta_1\\
			\zeta_2\\
			\zeta_3\\
			\zeta_4
		\end{bmatrix}\succeq 
		\begin{bmatrix}
			\frac{\zeta_\eta\gamma\rho^2 L_f}{n}\\
			0\\
			0\\
			0\\
			\frac{1}{n}(-\frac{\zeta_\eta\gamma\rho^2}{2L_f}+\frac{\zeta_\eta^2\zeta_\gamma\gamma\rho^5\varphi_1}{L_f})
		\end{bmatrix},}
	\end{align}
	which can be rewritten as
	\begin{align}\label{eq:matrixform2}
 \small{
		\begin{bmatrix}
			1-\frac{\zeta_\eta\rho}{L_f}&-72\lwi^2&-32\zeta_\gamma\lwi^2&-128\zeta_\gamma\lwi^2\rho^2\\
			-36\zeta_\eta^2&1-\frac{2\zeta_\eta\rho}{L_f}&-64\zeta_\eta^2\zeta_\gamma\rho^2&-64\zeta_\gamma(\lwi^2+4\zeta_\eta^2\rho^4)\\
			-36\zeta_\gamma\lwi^2r_0&-144\zeta_\gamma\lwi^2\rho^2&1-\frac{2\zeta_\eta\zeta_\gamma\rho^3}{L_f}&-256\zeta_\gamma^2\lwi^2\rho^4 r_0\\
			0&-36\zeta_\gamma\lwi^2r_0&0&1-\frac{2\zeta_\eta\zeta_\gamma\rho^3}{L_f}\\
			0&-72\zeta_\eta\rho^3&-32\zeta_\eta\zeta_\gamma\rho^3&-72\zeta_\eta\zeta_\gamma\rho^5
		\end{bmatrix}
		\begin{bmatrix}
			\zeta_1\\
			\zeta_2\\
			\zeta_3\\
			\zeta_4
		\end{bmatrix}\succeq 
		\begin{bmatrix}
			2\zeta_\eta\rho\\
			0\\
			0\\
			0\\
			-1+2\zeta_\eta\zeta_\gamma\rho^3\varphi_1
		\end{bmatrix}}.
	\end{align}
 Since $\varphi_1<1,~\rho\leq1$, and~$\lwi^2\leq4$, it is easy to verify that \eqref{eq:matrixform2} holds if $\zeta_\eta=\zeta_\gamma\leq\bar{\zeta}_1$, $\zeta_1=\frac{n}{2L_f},~\zeta_3=1$, and $\zeta_2=\zeta_4=\zeta_\eta$. Then we have~\eqref{eq:verify1}--\eqref{eq:verify2} hold, which means
 \begin{align*}
 &\frac{1}{T}\sum_{k=0}^{T}\big(\mathbb{E}[\Vert\lf(\bar{x}_k\Vert^2]+\mathbb{E}[\Vert \x_k-\bx_k\Vert^2]\big)\\
 &~~~~~\leq\frac{\bar{\kappa}_1 M_1}{T}+\frac{4}{n}\mathbb{E}\left[\left\Vert\sum_{t=0}^\infty\xi_{y,t}\right\Vert^2\right].
\end{align*}

\section{The proof of Theorem~\ref{theo:convergence}}\label{app-convergence}
In this proof, in addition to the notations used in the proof of Theorem~\ref{theo:convergence0}, we also denote
\begin{align*}
    &\bar{\zeta}_2=\min\{\frac{1}{8\lwi\sqrt{r_0}},\frac{n}{2L_f(\frac{n\nu}{2L_f^2}+930)},\frac{1}{\frac{2\nu+18n}{L_f}+576},\\
    &~~~\frac{1}{\frac{2\nu+72nr_0}{L_f}+576+1024r_0},\frac{1}{\frac{2\nu}{L_f}+144r_0},\frac{1}{2\varphi_1+176}\}.
\end{align*}
From~\eqref{eq:boundofaveandoptimal0} and Assumption~\ref{as:PLcondition}, we have
\begin{align*}
	\E[&f(\bar{x}_{k+1})]-f^*\\
	&\leq (1-\frac{\eta\nu}{2})(\E[f(\bar{x}_k)]-f^*)-\frac{1}{n}(\frac{\eta}{2}-\eta^2L_f)\E[\Vert\by_k\Vert^2]\\
	&+\frac{\eta L_f^2}{n}\E[\Vert\bx_k-\x_k\Vert^2]+\frac{\eta}{n}\E[\Vert\sum_{t=0}^{k-1}\mathbf{\xi}_{y,t}\Vert^2]\\
	&+\frac{1}{n}(\frac{1}{\eta}+L_f)\E[\Vert\nxk\Vert^2],\addtag\label{eq:boundofaveandoptimal1}
\end{align*}
From~\eqref{eqn:lyapunov},~\eqref{eq:boundofaveandoptimal1}, and Lemma~\ref{lemma:linearinequalities}, we have
\begin{align*}
	\E[V_{k+1}]&\leq (1-\frac{\eta\nu}{2})\E[V_k]+\!(\mathbf{s}^\top G-(1-\frac{\eta\nu}{2})\mathbf{s}^\top\!\!\!+\mathbf{c}^\top)\E[\Theta_k]\\
	&-\frac{1}{n}(\frac{\eta}{2}-\eta^2L_f-\mathbf{s}^\top\vartheta_1)\E[\Vert\by_k\Vert^2]+b_1\bar{q}^{2k}\bar{s_\xi}^2\\
	&+\frac{\eta}{n}\E\left[\left\Vert\sum_{t=0}^k\xi_{y,t}\right\Vert^2\right]
\end{align*}
Then the Theorem~\ref{theo:convergence} can be proved if there exists some positive constants $\zeta_1$--$\zeta_4$ such that the following inequalities hold.
\begin{align*}
	&((1-\frac{\eta\nu}{2})I-G^\top)\mathbf{s}-\mathbf{c}\succeq\mathbf{0},\addtag\label{eq:verify3}\\
	&\frac{1}{n}(\frac{\eta}{2}-\eta^2L_f-\mathbf{s}^\top\vartheta_1)\geq0.\addtag\label{eq:verify4}
\end{align*}
Similar to the proof of Theorem~\ref{theo:convergence0}, we complete the proof if $\zeta_\eta=\zeta_\gamma\leq\bar{\zeta}_2$, $\zeta_1=\frac{n}{2L_f},~\zeta_3=1$, and $\zeta_2=\zeta_4=\zeta_\eta$.

\section{The proof of Theorem~\ref{theo:privacy}}\label{app-privacy}

From PGTC, it is clear that the observation sequence $\mathcal{H}=\{\mathcal{H}_k\}_{k=0}^\infty$ is uniquely determined by the noise sequences $\xi_x=\{\nxk\}_{k=0}^\infty$, $\xi_y=\{\nyk\}_{k=0}^\infty$, and random sequence~$\mathbf{\varrho}=\{\mathbf{\varrho}_k\}_{k=0}^\infty$, where $\mathbf{\varrho}_k\in\R^{nd}$ is a vector and its element $[\mathbf{\varrho}_k]_{ij}$ is the compression perturbation of $x_{ij,k}^a-x^c_{ij,k}$. We use function $Z_{\mathcal{F}}$ to denote the relation, i.e., $\mathcal{H}=Z_{\mathcal{F}}(\xi_x,\xi_y,\mathbf{\varrho})$, where $\mathcal{F}=\{\x(0),W,\mathcal{S}\}$. From Definition~\ref{def:differentialprivacy}, to show the differential privacy of the cost function $f_{i_0}$, we need to show that the following inequality holds for any observation $\mathcal{H}\subseteq\text{Range}(C)$ and any pair of adjacent cost function sets $\mathcal{S}^{(1)}$ and $\mathcal{S}^{(2)}$,
\begin{align*}
P\{(\xi_{x},\xi_{y},&\mathbf{\varrho})\in\Psi|Z_{\mathcal{F}^{(1)}}(\xi_{x},\xi_{y},\mathbf{\varrho})\in\mathcal{H}\}\\
&\leq e^\epsilon P\{(\xi_{x},\xi_{y},\mathbf{\varrho})\in\Psi|Z_{\mathcal{F}^{(2)}}(\xi_{x},\xi_{y},\mathbf{\varrho})\in\mathcal{H}\},
\end{align*}
where $\mathcal{F}^{(l)}\!\!=\!\!\{\x(0),W,\mathcal{S}^{(l)}\!\}$, $l\!=\!1,2$, and $\Psi$ denotes the sample space. Then it is indispensable to guarantee $Z_{\mathcal{F}^{(1)}}(\xi_{x},\xi_{y},\mathbf{\varrho})=$ $Z_{\mathcal{F}^{(2)}}(\xi_{x},\xi_{y},\mathbf{\varrho})$, i.e.,
\begin{align*}
&C(x_{i,k}^{a-c,(1)},\varrho_k)=C(x_{i,k}^{a-c,(2)},\varrho_k),\addtag\label{eq:proofep1}\\
&C(y_{i,k}^{a-c,(1)},\varrho_k)=C(y_{i,k}^{a-c,(2)},\varrho_k),\addtag\label{eq:proofep2}
\end{align*}
for $\forall i\in\mathcal{V}$ and any $k\geq0$, where
\begin{align*}
&x_{i,k}^{a-c,(1)}=x_{i,k}^{a,(1)}-x_{i,k-1}^{c,(1)},\\
&y_{i,k}^{a-c,(1)}=y_{i,k}^{a,(1)}-y_{i,k-1}^{c,(1)}.
\end{align*}
Since $x_{i,0}^{c,(1)}=y_{i,0}^{c,(1)}=0,$ $l=1,2,$ from~\eqref{citerationx}--\eqref{citerationyc} and~\eqref{eq:proofep1}--\eqref{eq:proofep2}, we have
\begin{align*}
x_{i,k}^{c,(1)}=x_{i,k}^{c,(2)},
y_{i,k}^{c,(1)}=y_{i,k}^{c,(2)},~k=0,\dots,\infty.
\end{align*}
Then one obtains that 
\begin{align}\label{eq:pdfofcompressor}
\begin{aligned}
&f_c(x_{i,k}^{a-c,(1)},\varrho_k)=f_c(x_{i,k}^{a-c,(2)},\varrho_k),\\
&f_c(y_{i,k}^{a-c,(1)},\varrho_k)	=f_c(y_{i,k}^{a-c,(2)},\varrho_k),
\end{aligned}
\end{align}
if $x_{i,k}^{a,(1)}=x_{i,k}^{a,(2)}$ and $y_{i,k}^{a,(1)}=y_{i,k}^{a,(2)}$, for $\forall i\in\mathcal{V}$. Then due to the property of conditional probability, we have
\begin{align}\label{eq:ineqofprivacy}
\begin{aligned}
&\frac{P\{(\xi_{x},\xi_{y},\mathbf{\varrho})\in\Psi|Z_{\mathcal{F}^{(1)}}(\xi_{x},\xi_{y},\mathbf{\varrho})\in\mathcal{H}\}}{P\{(\xi_{x},\xi_{y},\mathbf{\varrho})\in\Psi|Z_{\mathcal{F}^{(2)}}(\xi_{x},\xi_{y},\mathbf{\varrho})\in\mathcal{H}\}}\\
&\leq\frac{P\{(\xi_{x},\xi_{y})\in\Psi|Z_{\mathcal{F}^{(1)}}(\xi_{x},\xi_{y})\in\mathcal{H}\}}{P\{(\xi_{x},\xi_{y})\in\Psi|Z_{\mathcal{F}^{(2)}}(\xi_{x},\xi_{y})\in\mathcal{H},E_1\}},
\end{aligned}
\end{align}	
where $E_1=\cup_{k=0}^\infty\{x_{i,k}^{a,(2)}=x_{i,k}^{a,(1)}, y_{i,k}^{a,(2)}=y_{i,k}^{a,(1)},\forall i\in\mathcal{V}\}$ is an event. We then analyze the right side of the inequality~\eqref{eq:ineqofprivacy}. Since event $E_1$ holds, one obtains that
\begin{align}\label{eq:privacyau1}
\xi_{x_{i},k}^{(1)}=\xi_{x_{i},k}^{(2)},~\xi_{y_{i},k}^{(1)}=\xi_{y_{i},k}^{(2)}~~\forall~k\in\mathbb{N},~\forall i\neq i_0.
\end{align}
From~\eqref{iterationx}--\eqref{iterationy}, the noises with respect to agent~$i_0$ should satisfy
\begin{align*}
    &\Delta \xi_{x_{i_0},k}=-\Delta x_{i_0,k},\addtag\label{eq:privacyau2}\\
    &\Delta \xi_{y_{i_0},k}=-\Delta y_{i_0,k},\addtag\label{eq:privacyau3}\\
    &\Delta x_{i_0,k+1}=-\eta\Delta y_{i_0,k},\addtag\label{eq:privacyau4}\\
    &\Delta y_{i_0,k+1}=\Delta f_{i_0,k+1}-\Delta f_{i_0,k},\addtag\label{eq:privacyau5}
\end{align*}
where $\Delta \xi_{x_{i_0},k}\triangleq\xi_{x_{i_0},k}^{(1)}-\xi_{x_{i_0},k}^{(2)},~\Delta \xi_{y_{i_0},k}\triangleq\xi_{y_{i_0},k}^{(1)}-\xi_{y_{i_0},k}^{(2)}$, $\Delta x_{i_0,k}\triangleq x_{i_0,k}^{(1)}-x_{i_0,k}^{(2)},~\Delta y_{i_0,k}\triangleq y_{i_0,k}^{(1)}-y_{i_0,k}^{(2)}$, and $\Delta f_{i_0,k}=\lf_{i_0}^{(1)}(x_{i_0,k}^{(1)})-\lf_{i_0}^{(2)}(x_{i_0,k}^{(2)})$. From~\eqref{eq:privacyau1}--\eqref{eq:privacyau5}, we know for any pair $(\xi_x^{(1)},\xi_y^{(1)})$, there exist a unique pair $(\xi_x^{(2)},\xi_y^{(2)})=(\xi_x^{(1)}+\Delta\xi_x,~\xi_y^{(1)}+\Delta\xi_y)$ such that $Z_{\mathcal{F}^{(1)}}(\xi_{x}^{(1)},\xi_{y}^{(1)})=Z_{\mathcal{F}^{(1)}}(\xi_{x}^{(2)},\xi_{y}^{(2)})$. Let $\mathcal{I}^{(l)}=\{(\xi_x^{(l)},\xi_y^{(l)})|Z_{\mathcal{F}^{(l)}}(\xi_{x}^{(l)},\xi_{y}^{(l)})\in\mathcal{H}\}$, $l=1,2$. Then we have
\begin{align*}
    &\frac{P\{(\xi_{x},\xi_{y})\in\Psi|Z_{\mathcal{F}^{(1)}}(\xi_{x},\xi_{y})\in\mathcal{H}\}}{P\{(\xi_{x},\xi_{y})\in\Psi|Z_{\mathcal{F}^{(2)}}(\xi_{x},\xi_{y})\in\mathcal{H},E_1\}}\\
    &=\frac{P\{(\xi_{x}^{(1)},\xi_{y}^{(1)})\in\mathcal{I}^{(1)}\}}{P\{(\xi_{x}^{(2)},\xi_{y}^{(2)})\in\mathcal{I}^{(2)}\}}\\
    &=\frac{\int_{\mathcal{I}^{(1)}} f_{\xi}\left(\xi_x^{(1)},\xi_y^{(1)}\right) d \xi_x^{(1)}\xi_y^{(1)}}{\int_{\mathcal{I}^{(1)}} f_{\xi}\left(\xi_x^{(1)}+\Delta \xi_x,\xi_y^{(1)}+\Delta \xi_y\right) d \xi_x^{(1)}\xi_y^{(1)}},\addtag\label{eq:privacyau6}
\end{align*}
where 
\begin{align*}
    &f_{\xi}\left(\xi_x^{(l)},\xi_y^{(l)}\right)=\prod_{k=0}^K \prod_{i=1}^n \prod_{r=1}^d f_L\left([\xi_{x_i, k}^{(l)}]_r, s_{\xi_{x_i}}q_i^k\right)\\
    &~~~~~~~~~~f_L\left([\xi_{y_i, k}^{(l)}]_r, s_{\xi_{y_i}}q_i^k\right).
\end{align*}
Then~\eqref{eq:privacyau6} can be rewritten as
\begin{align*}
&\frac{f_{\xi}\left(\xi_x^{(1)},\xi_y^{(1)}\right)}{f_{\xi}\left(\xi_x^{(1)}+\Delta \xi_x,\xi_y^{(1)}+\Delta \xi_y\right)}\\
&=\prod_{k=0}^K \prod_{i=1}^n \prod_{r=1}^d \frac{f_L\left([\xi_{x_i, k}^{(1)}]_r, s_{\xi_{x_i}}q_i^k\right)}{f_L\left([\xi_{x_i, k}^{(1)}+\Delta\xi_{x_i, k}]_r, s_{\xi_{x_i}}q_i^k\right)}\\
&~~~~~~~\frac{f_L\left([\xi_{y_i, k}^{(1)}]_r, s_{\xi_{y_i}}q_i^k\right)}{f_L\left([\xi_{y_i, k}^{(1)}+\Delta\xi_{y_i, k}]_r, s_{\xi_{y_i}}q_i^k\right)}\\
&\leq \exp\left(\sum_{k=0}^K\frac{\Vert\Delta \xi_{x_{i_0}}\Vert_1}{s_{\xi_{x_{i_0}}}q_{i_0}^k}+\frac{\Vert\Delta \xi_{y_{i_0}}\Vert_1}{s_{\xi_{y_{i_0}}}q_{i_0}^k}\right).
\end{align*}
From~\eqref{eq:privacyau2}--\eqref{eq:privacyau5}, we have
\begin{align*}
    \Vert\Delta \xi_{y_{i_0}}\Vert_1\leq 4\sqrt{d}M,~\Vert\Delta \xi_{x_{i_0}}\Vert_1\leq 4\sqrt{d\eta} M,
\end{align*}
Then we complete the proof.


\section{The proof of Theorem~\ref{theo:convergence1}}\label{app-convergence1}
\subsection{Supporting Lemmas}
\begin{lemma}
~(Lemma 2 in~\cite{yi2022communication}) Suppose Assumption~\ref{as:strongconnected} holds, let $L$ be the Laplacian matrix of the graph $G$ and $K_n=\mathbf{I}_n-\frac{1}{n}\mathbf{1}_n\mathbf{1}_n^\top$. Then $L$ and $K_n$ are positive semi-definite, $L\leq\bar{\lambda}_L\mathbf{I}_n$, $\bar{\lambda}_{K_n}=1$, 
\begin{align}
&K_nL=LK_n=L,\label{eq:propertyofk}\\
&0\leq\underline{\lambda}_LK_n\leq L\leq\bar{\lambda}_LK_n.\label{eq:propertyofk1}
\end{align}
Moreover, there exists an orthogonal matrix $[r~R]\in\R^{n\times n}$ with $r=\frac{1}{\sqrt{n}}\mathbf{1}_n$ and $R\in\R^{n\times(n-1)}$ such that
\begin{align}
	&PL=LP=K_n,\label{eq:propertyofp}\\
	&\bar{\lambda}_L^{-1}\mathbf{I}_n\leq P\leq\underline{\lambda}_L^{-1}\mathbf{I}_n,\label{eq:propertyofp1}
\end{align}
where 
\begin{align*}
	P=\begin{bmatrix}
		r&R
	\end{bmatrix}
	\begin{bmatrix}
		\lambda_n^{-1}&0\\
		0&\Lambda_1^{-1}
	\end{bmatrix}
	\begin{bmatrix}
		r^\top\\
		R^\top
	\end{bmatrix},
\end{align*}
with $\Lambda_1=\text{diag}([\lambda_2,\dots,\lambda_n])$ and $0\leq\lambda_2\leq\cdots\leq\lambda_n$ being the nonzero eigenvalues of $L$.
\end{lemma}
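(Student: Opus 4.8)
The plan is to exploit the fact that $L$ and $K_n$ are simultaneously diagonalizable: both are real symmetric and share $\mathbf{1}_n$ as the eigenvector for their respective zero eigenvalues. First I would record the basic spectral facts. Because $\mathcal{G}$ is undirected, $W=W^\top$ and hence $L=D-W$ is symmetric; the identity $x^\top L x=\tfrac12\sum_{i,j}w_{ij}(x_i-x_j)^2\ge 0$ shows $L\succeq 0$, and connectedness in Assumption~\ref{as:strongconnected} forces the zero eigenvalue to be simple with eigenvector $\mathbf{1}_n$. Thus $L$ has eigenvalues $0=\lambda_1<\lambda_2\le\cdots\le\lambda_n$, giving $L\preceq\bar\lambda_L\mathbf{I}_n$ with $\bar\lambda_L=\lambda_n$. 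For $K_n=\mathbf{I}_n-\tfrac1n\mathbf{1}_n\mathbf{1}_n^\top$ I would note that it is the orthogonal projector onto $\mathbf{1}_n^{\perp}$: it is symmetric and idempotent, with eigenvalue $0$ (eigenvector $\mathbf{1}_n$) and eigenvalue $1$ of multiplicity $n-1$, so $K_n\succeq 0$ and $\bar\lambda_{K_n}=1$.

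For the commuting identities~\eqref{eq:propertyofk}, the key observation is $L\mathbf{1}_n=\mathbf{0}$ and, by symmetry, $\mathbf{1}_n^\top L=\mathbf{0}^\top$. Hence $K_nL=L-\tfrac1n\mathbf{1}_n(\mathbf{1}_n^\top L)=L$ and $LK_n=L-\tfrac1n(L\mathbf{1}_n)\mathbf{1}_n^\top=L$. For the sandwich~\eqref{eq:propertyofk1} I would pass to the common eigenbasis $[r~R]$ with $r=\tfrac1{\sqrt n}\mathbf{1}_n$, obtained from the spectral theorem applied to $L$: on $\mathrm{span}(\mathbf{1}_n)$ both $L$ and $K_n$ vanish, while on $\mathbf{1}_n^{\perp}$ the operator $K_n$ is the identity and $L$ has eigenvalues in $[\underline\lambda_L,\bar\lambda_L]$. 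Writing $L=[r~R]\,\mathrm{diag}(0,\Lambda_1)\,[r~R]^\top$ with $\Lambda_1=\mathrm{diag}(\lambda_2,\dots,\lambda_n)$, the differences $L-\underline\lambda_L K_n$ and $\bar\lambda_L K_n-L$ are block-diagonal with zero top block and bottom blocks $\Lambda_1-\underline\lambda_L\mathbf{I}_{n-1}\succeq0$ and $\bar\lambda_L\mathbf{I}_{n-1}-\Lambda_1\succeq0$, which yields the two inequalities.

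Finally, for~\eqref{eq:propertyofp}--\eqref{eq:propertyofp1} I would substitute the given $P$ into the products $PL$ and $LP$ and use orthogonality $[r~R]^\top[r~R]=\mathbf{I}_n$. Since the middle factor of $L$ carries a $0$ in its top-left slot, the top-left entry $\lambda_n^{-1}$ of $P$'s middle factor is annihilated in both $PL$ and $LP$, leaving $\mathrm{diag}(0,\Lambda_1^{-1}\Lambda_1)=\mathrm{diag}(0,\mathbf{I}_{n-1})$, i.e.\ exactly $K_n$; hence $PL=LP=K_n$. The eigenvalues of $P$ are $\lambda_n^{-1}$ together with $\lambda_2^{-1},\dots,\lambda_n^{-1}$, all lying in $[\lambda_n^{-1},\lambda_2^{-1}]=[\bar\lambda_L^{-1},\underline\lambda_L^{-1}]$, which gives $\bar\lambda_L^{-1}\mathbf{I}_n\preceq P\preceq\underline\lambda_L^{-1}\mathbf{I}_n$.

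The computations are routine linear algebra once the shared eigenbasis is in hand, so there is no deep obstacle; the only point requiring care is the deliberate choice of the top-left entry $\lambda_n^{-1}$ in $P$. This value is irrelevant to $PL=K_n$ (it is killed by the kernel of $L$) but is precisely what keeps every eigenvalue of $P$ inside $[\bar\lambda_L^{-1},\underline\lambda_L^{-1}]$: a larger choice such as $\underline\lambda_L^{-1}$ would still satisfy $PL=K_n$ yet sit at the upper edge of the bound, whereas taking $\bar\lambda_L^{-1}$ makes the two-sided estimate~\eqref{eq:propertyofp1} tight and clean.
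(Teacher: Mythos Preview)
Your argument is correct. The paper does not supply its own proof of this lemma; it simply quotes the result as Lemma~2 of \cite{yi2022communication}, so there is no in-paper proof to compare against. The simultaneous-diagonalization approach you outline---using the common orthogonal eigenbasis $[r~R]$ for $L$ and $K_n$, reading off $K_nL=LK_n=L$ from $L\mathbf{1}_n=\mathbf{0}$, and verifying \eqref{eq:propertyofk1}, \eqref{eq:propertyofp}, \eqref{eq:propertyofp1} blockwise in that basis---is precisely the standard route to these facts, and your remark about why the top-left entry $\lambda_n^{-1}$ is the natural choice is on point.
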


Denote $\tilde{f}(\x_{k})=\sum_{i=1}^nf_i(x_{i,k}),~\bl=L\otimes \bi_d$, $\bp=P\otimes\bi_d$, $\g_k=\nabla\tilde{f}(\x_{k})$, $\bg_k=\bh\g_k$, $\g_k^b=\nabla\tilde{f}(\bx_{k})$, $\bg_k^b=\bh\g_k^b=\mathbf{1}_n\otimes\nabla f(\bar{x}_k)$.

Before proving Theorem~\ref{theo:convergence2}, we provide the inequality regarding with two state values by using the following lemma.
\begin{lemma}\label{lemma:lyapunov}
Suppose Assumptions~\ref{as:smooth}--\ref{as:finite} and \ref{as:strongconnected}--\ref{as:compressor} hold. Under PPDC, if $\alpha_x,\alpha_y\in(0,\frac{1}{r})$, we have
\begin{align*}
	\widetilde{V} _{k+1}&\leq\widetilde{V} _{k}-\Vert\x_k\Vert^2_{(\kappa_1-\kappa_2\eta)\eta\bk}-\Vert\vv_k+\frac{1}{\omega}\g_k^b\Vert^2_{(\kappa_3-\kappa_4\eta)\eta\bp}\\
	&~~~-(\kappa_5-\kappa_6\eta)\eta \Vert\bg_k\Vert^2-(\kappa_7-\kappa_8\eta-\kappa_9\eta^2)\Vert\x_{k}\\
	&~~~+\nxk-\x^c_{k}\Vert^2-\frac{\eta}{4}\Vert\bg_k^b\Vert^2+\kappa_{10}\Vert\nxk\Vert^2+\kappa_{11}\Vert\nvk\Vert^2\\
	&~~~+\kappa_{12}\Vert\sum_{t=0}^{k}\bar{\xi}_{v,t}\Vert^2,\addtag\label{eq:lyapunov1}
\end{align*}
where
\begin{align*}
	&\widetilde{V} _k=\frac{1}{2}\Vert\x_{k}\Vert_\bk^2+\frac{1}{2}\Vert\vv_{k}+\frac{1}{\omega}\g_{k}^b\Vert^2_{\bp+\frac{\gamma}{\omega}\bp}+\x_{k}^\top\bk\bp(\vv_{k}+\frac{1}{\omega}\g_{k}^b)\\
	&~~~~~~+\Vert\x_{k}+\nxk-\x^c_{k}\Vert^2+n(f(\bar{x}_{k})-f^*),\\
	&\kappa_1=\frac{\gamma\underline{\lambda}_L}{2}-\frac{1}{4}(7+9L_f^2+13\omega),\\
	&\kappa_2=\frac{5L_f^2}{2}+\frac{6(2+\alpha_xr\varphi)}{\alpha_xr\varphi}(\gamma^2\bar{\lambda}_L^2+L_f^2)+6\gamma^2\bar{\lambda}_L^2\\
	&~~~~~~+\frac{9\omega^2+3\gamma^2}{2}\bar{\lambda}_L+3\omega^2+\frac{3}{2},\\
	&\kappa_3=\frac{\omega}{4}-\frac{3}{\underline{\lambda}_L}-(\frac{1}{2}+\frac{1}{2\omega})(1+\frac{\gamma}{\omega}),\\
	&\kappa_4=\frac{1}{\underline{\lambda}_L}+\frac{\omega^2\bar{\lambda}_L}{2}+2\omega^2\bar{\lambda}_L+6(1+\frac{2}{\alpha_xr\varphi})\omega^2\underline{\lambda}_L,\\
	&\kappa_5=\frac{1}{4}-(\frac{L_f^2}{\omega\underline{\lambda}_L}+\frac{\gamma L_f^2}{\omega^2\underline{\lambda}_L}+\frac{L_f^2}{\omega^2\underline{\lambda}_L^2}),\\
	&\kappa_6=\frac{2L_f^2}{\omega^2\underline{\lambda}_L}+\frac{2L_f^2}{\omega^3\underline{\lambda}_L}+\frac{L_f^2}{\omega^2\underline{\lambda}_L^2}+\frac{3L_f^2}{2}+L_f,\\
	&\kappa_7=\frac{\alpha_xr\varphi}{2}(1+\alpha_xr\varphi),\\
	&\kappa_8=\frac{1}{2}(\gamma+4\omega)\bar{\lambda}_L+3\omega r_0\\
	&\kappa_9=6\gamma^2\bar{\lambda}_L^2r_0(1+\frac{2}{\alpha_xr\varphi})+ r_0(6\gamma^2\bar{\lambda}_L^2+\frac{9\omega^2+3\gamma^2}{2}\bar{\lambda}_L\\
	&~~~~~~+3\omega^2+\frac{3}{2}),\\
	&\kappa_{10}=4(\frac{L_f^2}{\omega^2\underline{\lambda}_L}+\frac{\gamma L_f^2}{\omega^3\underline{\lambda}_L}+\frac{L_f^2}{2\eta\omega\underline{\lambda}_L}+\frac{\gamma L_f^2}{2\eta\omega^2\underline{\lambda}_L}\\
	&~~~~~~+\frac{(1+2\eta)L_f^2}{2\eta\omega^2\underline{\lambda}_L^2}+\frac{3L_f^2}{4}+\frac{1}{2\eta}+\frac{L_f}{2})+\frac{\bar{\lambda}_{\bi_{nd}+\eta\gamma\bl}^2}{\eta}\\
	&~~~~~~+10+(\frac{1}{2}+3\omega+2\omega\bar{\lambda}_L)\eta+(\frac{1}{\eta\omega}+\frac{1}{2})\frac{1}{\underline{\lambda}_L}+\frac{12}{\alpha_x r\varphi}\\
	&~~~~~~+6\bar{\lambda}^2_{\eta\gamma\bl-\bi}(1+\frac{2}{\alpha_x r\varphi})+\eta^2(6\gamma^2\bar{\lambda}_L^2+\frac{9\omega^2+3\gamma^2}{2}\bar{\lambda}_L\\
	&~~~~~~+3\omega^2+\frac{3}{2}),\\
	&\kappa_{11}=\underline{\lambda}_L^{-1}(\frac{1}{2\eta}+\frac{\gamma}{2\eta\omega}+\frac{5}{2}+\frac{3\gamma}{2\omega}+\eta\omega+\frac{\underline{\lambda}_L^{-1}}{\eta}),\\
	&\kappa_{12}=4\eta^2\omega^2(\frac{L_f^2}{\omega^2\underline{\lambda}_L}+\frac{\gamma L_f^2}{\omega^3\underline{\lambda}_L}+\frac{L_f^2}{2\eta\omega\underline{\lambda}_L}+\frac{\gamma L_f^2}{2\eta\omega^2\underline{\lambda}_L}\\
	&~~~~~~+\frac{(1+2\eta)L_f^2}{2\eta\omega^2\underline{\lambda}_L^2}+\frac{3L_f^2}{4}+\frac{1}{2\eta}+\frac{L_f}{2})+\frac{\eta}{4}.
\end{align*}
\end{lemma}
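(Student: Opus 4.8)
The plan is to follow the primal--dual Lyapunov argument of~\cite{yi2022communication} (Theorems 1 and 2), adapted to absorb the extra compression error and the injected Laplace noise. First I would rewrite the updates~\eqref{eq:iterationxp2}--\eqref{eq:iterationv2} in the compact form
\begin{align*}
\x_{k+1}&=\x_k+\nxk-\eta(\gamma\bl\hx_k+\omega\vv_k+\g_k),\\
\vv_{k+1}&=\vv_k+\nvk+\eta\omega\bl\hx_k,
\end{align*}
and use $\hx_k-(\x_k+\nxk)=C(\x_k+\nxk-\x^c_k)-(\x_k+\nxk-\x^c_k)$ together with Assumption~\ref{as:compressor} to control the compression contribution through the quantity $\x_k+\nxk-\x^c_k$. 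A key structural observation is that, because $\mathbf{1}_n^\top L=0$ (doubly stochastic $W$), the dual average obeys $\bar v_{k+1}=\bar v_k+\bar\xi_{v,k}=\sum_{t=0}^{k}\bar\xi_{v,t}$; this is precisely~\eqref{eq:dual} and is the origin of the accumulated--noise term $\kappa_{12}\Vert\sum_{t=0}^{k}\bar\xi_{v,t}\Vert^2$.

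Next I would expand the five pieces of $\widetilde{V}_{k+1}$ separately. For the consensus term $\tfrac12\Vert\x_{k+1}\Vert_\bk^2$ and the dual term $\tfrac12\Vert\vv_{k+1}+\tfrac1\omega\g_{k+1}^b\Vert^2_{(1+\frac{\gamma}{\omega})\bp}$ I would substitute the updates and repeatedly invoke the matrix identities from the supporting lemma---$\bk\bl=\bl\bk=\bl$ from~\eqref{eq:propertyofk}, $\bp\bl=\bl\bp=\bk$ from~\eqref{eq:propertyofp}, and the eigenvalue sandwiches~\eqref{eq:propertyofk1},~\eqref{eq:propertyofp1}---to turn products of $\bl,\bp,\bk$ into the quadratic forms on the right-hand side. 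Crucially, the cross term $\x_{k+1}^\top\bk\bp(\vv_{k+1}+\tfrac1\omega\g_{k+1}^b)$ must be expanded jointly with those two pieces: the $O(\eta)$ contributions of the three terms are designed to telescope into $-\Vert\x_k\Vert^2_{\kappa_1\eta\bk}$ and $-\Vert\vv_k+\tfrac1\omega\g_k^b\Vert^2_{\kappa_3\eta\bp}$, while all remaining interactions are collected as the $O(\eta^2)$ corrections governed by $\kappa_2,\kappa_4$. Throughout, cross products are split by Young's inequality~\eqref{eq:inequality1}--\eqref{eq:inequality2}, and the gradient differences $\g_{k+1}-\g_k$ and $\g_{k+1}^b-\g_k^b$ are bounded via the $L_f$-smoothness of Assumption~\ref{as:smooth}, which is where the $\kappa_5,\kappa_6$ coefficient of $\Vert\bg_k\Vert^2$ arises.

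For the compression--error term $\Vert\x_{k+1}+\nxkk-\x^c_{k+1}\Vert^2$ I would reproduce the contraction estimate of part (c) of the proof of Lemma~\ref{lemma:linearinequalities}: with $\alpha_x\in(0,\tfrac1r)$, Assumption~\ref{as:compressor} yields a factor $1-\tfrac{\varphi_1}{2}$ with $\varphi_1=\alpha_xr\varphi$, producing the leading coefficient $\kappa_7$ and pushing the increments $\Vert\x_{k+1}-\x_k\Vert^2$ into the $\eta,\eta^2$ corrections $\kappa_8,\kappa_9$. For the optimality gap $n(f(\bar x_{k+1})-f^*)$ I would apply~\eqref{eqn:smooth1} to $\bar x_{k+1}=\bar x_k+\bar\xi_{x,k}-\eta(\omega\bar v_k+\tfrac1n\sum_i\lf_i(x_{i,k}))$, completing the square to obtain the descent term $-\tfrac{\eta}{4}\Vert\bg_k^b\Vert^2$ (using $\Vert\bg_k^b\Vert^2=n\Vert\lf(\bar x_k)\Vert^2$) while absorbing the $\omega\bar v_k$ part into the accumulated dual noise and the replacement of $\bg_k$ by $\bg_k^b$ into the consensus and compression quadratics. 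Finally I would add the five expansions, collect the coefficients of $\Vert\x_k\Vert^2_\bk$, $\Vert\vv_k+\tfrac1\omega\g_k^b\Vert^2_\bp$, $\Vert\bg_k\Vert^2$, $\Vert\x_k+\nxk-\x^c_k\Vert^2$ and $\Vert\bg_k^b\Vert^2$, route all noise contributions into $\kappa_{10}\Vert\nxk\Vert^2+\kappa_{11}\Vert\nvk\Vert^2+\kappa_{12}\Vert\sum_{t=0}^{k}\bar\xi_{v,t}\Vert^2$, and match the survivors against the stated $\kappa_1$--$\kappa_{12}$.

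The hard part will be the simultaneous expansion of the cross term with the two quadratic forms: it is the coupling $\x^\top\bk\bp(\vv+\tfrac1\omega\g^b)$ that generates the genuine contraction of a primal--dual scheme, so the bookkeeping of which $O(\eta)$ pieces cancel and which survive as $O(\eta^2)$ corrections is delicate, and it is compounded here by the extra compression and noise terms absent from~\cite{yi2022communication}. Aligning the eigenvalue bounds of $\bp,\bk,\bl$ so that the surviving coefficients are exactly the claimed $\kappa_i$ is where essentially all the effort lies; once $\eta$ is taken small enough that $\kappa_1-\kappa_2\eta$, $\kappa_3-\kappa_4\eta$, $\kappa_5-\kappa_6\eta$ and $\kappa_7-\kappa_8\eta-\kappa_9\eta^2$ are nonnegative, the inequality~\eqref{eq:lyapunov1} follows.
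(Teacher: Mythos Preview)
Your proposal is correct and mirrors the paper's own proof: expand the five pieces of $\widetilde V_{k+1}$ separately using the compact updates, the identity $\bar v_{k+1}=\sum_{t\le k}\bar\xi_{v,t}$, the matrix relations~\eqref{eq:propertyofk}--\eqref{eq:propertyofp1}, smoothness, and the compressor contraction, then add them up and collect coefficients. The only cosmetic deviation is that the paper first leaves the combined bound in terms of $\Vert\hx_k\Vert^2_{B_2\eta}$ and then applies the splitting $\Vert\hx_k\Vert^2_\bk\le 3r_0\Vert\x_k+\nxk-\x^c_k\Vert^2+3\Vert\x_k\Vert^2_\bk+3\Vert\nxk\Vert^2$ as a final step, and its compression contraction uses $s=\tfrac{\varphi_2}{2}$ (yielding the extra $\tfrac{\varphi_2^2}{2}$ in $\kappa_7$) rather than the choice from Lemma~\ref{lemma:linearinequalities}(c) you reference.
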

\subsection{The proof of Lemma~\ref{lemma:lyapunov}}

\textbf{(i)} For simplicity of the proof, we first provide some useful properties. The update equations~\eqref{eq:iterationxp2}, \eqref{eq:iterationv2}, \eqref{citerationx}, and~\eqref{citerationy} can be rewritten as the following compact form
\begin{align*}
	&~\x_{k+1}=\x_{k}+\nxk-\eta(\gamma \bl\hx_{k}+\omega  \vv_{k}+\nabla\tilde{f}(\x_{k})),\addtag\label{eq:iterationxp3}\\
	&~\vv_{k+1}=\vv_{k}+\nvk+\eta\omega \bl\hx_{k},\addtag\label{eq:iterationv3}\\
	&~\hx_{k}=\x_{k}^c+C(\x_{k}^a-\x_{k}^c),\addtag\label{citerationx1}\\
	&~\x_{k+1}^c=(1-\alpha_x)\x_{k}^c+\alpha_x\hx_{k},\addtag\label{citerationxc1}
	\end{align*}
   From~\eqref{eq:iterationv3}, the propoerty of Laplacian matrix, and the fact that $\sum_{i=1}^n v_{i,0}=\mathbf{0}_d$, we have 
   \begin{align}\label{eq:propertyofv}
	\bv_{k+1}=\sum_{t=0}^{k}\bar{\xi}_{v,t}.
   \end{align}
Then from~\eqref{eq:iterationxp3} and~\eqref{eq:propertyofv}, one obtains that
\begin{align}\label{eq:propertyofbx}
	\bx_{k+1}=\bx_k+\nbxk-\eta\bg_k-\eta\omega\sum_{t=0}^{k}\bar{\xi}_{v,t}.
\end{align}
Furthermore, we have following useful equations
\begin{align*}
	&\Vert\g^b_k-\g_k\Vert^2\leq L_f^2\Vert\bx_k-\x_k\Vert^2\leq L_f^2\Vert\x_k\Vert^2_\bk,\addtag\label{eq:propertyofbg1}\\
	&\Vert\bg^b_k-\bg_k\Vert^2=\Vert\bh(\g^b_k-\g_k)\Vert^2\leq L_f^2\Vert\x_k\Vert^2_\bk,\addtag\label{eq:propertyofbg2}\\
	&\Vert\g^b_{k+1}-\g^b_k\Vert^2\leq L_f^2\Vert\bx_{k+1}-\bx_k\Vert^2\\
	&~~~~~~~~~~~~~~~~~\leq L_f^2\Vert\nbxk-\eta\bg_k-\eta\omega\sum_{t=0}^{k}\bar{\xi}_{v,t}\Vert^2\addtag\label{eq:propertyofbg3},
\end{align*}
where the first inequality comes from Assumption~\ref{as:smooth} and $\lb_\bk=1$; the second inequality comes from~\eqref{eq:propertyofbg1} and $\lb_\bh=1$; the last inequality comes from Assumption~\ref{as:smooth} and~\eqref{eq:propertyofbx}.

\textbf{(ii)} The proof of Lemma~\ref{lemma:lyapunov}.
We first provide the upper bound of~$\frac{1}{2}\Vert\x_{k+1}\Vert_\bk^2$
\begin{align*}
\frac{1}{2}\Vert\x_{k+1}\Vert_\bk^2&=\frac{1}{2}\Vert\x_{k}+\nxk-\eta(\gamma \bl\hx_{k}+\omega  \vv_{k}+\g_k)\Vert_\bk^2\\
&=\frac{1}{2}\Vert\x_{k}+\nxk-\eta\gamma \bl\hx_{k}\Vert_\bk^2\\
&~~~-\eta\omega (\x_{k}+\nxk-\eta\gamma \bl\hx_{k})^\top\bk(\vv_k+\frac{1}{\omega}\g_k)\\
&~~~+\Vert\vv_k+\frac{1}{\omega}\g_k\Vert^2_{\frac{\eta^2\omega^2}{2}\bk}\\
&\leq\frac{1}{2}\Vert\x_{k}\Vert_\bk^2+\frac{1}{2}\Vert\nxk-\eta\gamma \bl\hx_{k}\Vert_\bk^2\\
&~~~+\x_k^\top\bk((\bi_{nd}+\eta\gamma\bl)\nxk-\eta\gamma \bl(\hx_{k}-\nxk))\\
&~~~-\eta\omega (\x_{k}+\nxk-\eta\gamma \bl\hx_{k})^\top\bk\left(\vv_k+\frac{1}{\omega}\g_k^b\right)\\
&~~~+\frac{\eta}{2}\Vert\x_k\Vert_{\bk}^2+\frac{\eta}{2}\Vert\g_k-\g_k^b\Vert^2+\Vert\hx_k\Vert^2_{\frac{\eta^2\gamma^2}{2}\bl^2}\\
&~~~+\frac{\eta^2}{2}\Vert\g_k-\g_k^b\Vert^2+\frac{\eta}{2}\Vert\nxk\Vert_{\bk}^2+\frac{\eta}{2}\Vert\g_k-\g_k^b\Vert^2\\
&~~~+\Vert\vv_k+\frac{1}{\omega}\g_k^b+\frac{1}{\omega}\g_k-\frac{1}{\omega}\g_k^b\Vert^2_{\frac{\eta^2\omega^2}{2}\bk}\\
&\leq\frac{1}{2}\Vert\x_{k}\Vert_\bk^2-\eta\gamma\x_k^\top\bl(\hx_k+\nxk)+\frac{\eta}{4}\Vert\x_{k}\Vert_\bk^2\\
&~~~+\frac{1}{\eta}\Vert(\bi_{nd}+\eta\gamma\bl)\nxk\Vert_\bk^2+\Vert\nxk\Vert_\bk^2\\
&~~~+\Vert\hx_k\Vert^2_{\eta^2\gamma^2\bl^2}-\eta\omega (\x_{k})^\top\bk\left(\vv_k+\frac{1}{\omega}\g_k^b\right)\\
&~~~+\frac{1}{2}\Vert\nxk\Vert^2_{\bk}+\frac{\eta^2\omega^2}{2}\Vert\vv_k+\frac{1}{\omega}\g_k^b\Vert^2_{\bk}\\
&~~~+\Vert\hx_k\Vert^2_{\frac{\eta^2\gamma^2}{2}\bl^2}+\frac{\eta^2\omega^2}{2}\Vert\vv_k+\frac{1}{\omega}\g_k^b\Vert^2_{\bk}\\
&~~~+\frac{\eta}{2}\Vert\x_k\Vert_{\bk}^2+\eta \Vert\g_k-\g_k^b\Vert^2+\Vert\hx_k\Vert^2_{\frac{\eta^2\gamma^2}{2}\bl^2}\\
&~~~+\frac{3\eta^2}{2}\Vert\g_k-\g_k^b\Vert^2+\frac{\eta}{2}\Vert\nxk\Vert_{\bk}^2\\
&~~~+\eta^2\omega^2\Vert\vv_k+\frac{1}{\omega}\g_k^b\Vert^2_{\bk}\\
&\leq\frac{1}{2}\Vert\x_{k}\Vert_\bk^2-\Vert\x_k\Vert_{\eta\gamma\bl}^2+\Vert\hx_k+\nxk-\x_k\Vert_{\frac{\eta\gamma\bl}{2}}^2\\
&~~~+\Vert\x_k\Vert_{\frac{\eta\gamma\bl}{2}}^2+\frac{3\eta}{4}\Vert\x_{k}\Vert_\bk^2\\
&~~~+\Vert\nxk\Vert_{(\frac{\bar{\lambda}_{\bi_{nd}+\eta\gamma\bl}^2}{\eta}+\frac{3}{2}+\frac{\eta}{2})\bk}^2+\Vert\hx_k\Vert^2_{2\eta^2\gamma^2\bl^2}\\
&~~~+\eta(\frac{3\eta}{2}+1)\Vert\g_k-\g_k^b\Vert^2\!+\!\Vert\vv_k+\frac{1}{\omega}\g_k^b\Vert^2_{2\eta^2\omega^2\bk}\\
&~~~-\!\eta\omega (\hx_k\!+\!\x_{k}\!-\!\hx_k\!+\!\nxk\!-\!\nxk)^\top\bk(\vv_k\!+\!\frac{1}{\omega}\g_k^b)\\
&\leq\frac{1}{2}\Vert\x_{k}\Vert_\bk^2-\Vert\x_k\Vert_{\frac{\eta\gamma\bl}{2}-(\frac{3\eta}{4}+\eta(\frac{3\eta}{2}+1)L_f^2)\bk}^2\\
&~~~+\Vert\hx_k\Vert^2_{2\eta^2\gamma^2\bl^2}\\
&~~~+\Vert\hx_k+\nxk-\x_k\Vert_{\frac{\eta}{2}(\gamma\bl+4\omega\bar{\lambda}_L\bk) }^2\\
&~~~+\Vert\nxk\Vert_{(\frac{\bar{\lambda}_{\bi_{nd}+\eta\gamma\bl}^2}{\eta}+\frac{3}{2}+\frac{\eta}{2}+2\eta\omega\bar{\lambda}_L)\bk}^2\\
&~~~+\Vert\vv_k+\frac{1}{\omega}\g_k^b\Vert^2_{2\eta^2\omega^2+\frac{\eta\omega\bar{\lambda}_L^{-1}}{4}}\\
&~~~-\eta\omega (\hx_k)^\top\bk\left(\vv_k\!+\!\frac{1}{\omega}\g_k^b\right),\addtag\label{eq:upperboundofx1}
\end{align*}
where the first and second equalities comes from~\eqref{eq:iterationxp3}; the first, second, and third inequalities comes from~\eqref{eq:inequality1} and~\eqref{eq:propertyofk}; the last inequality comes from~\eqref{eq:inequality1},~\eqref{eq:propertyofbg1}, and $\lb_\bk=1$. 

We then provide the upper bound of $\frac{1}{2}\Vert\vv_{k+1}+\frac{1}{\omega}\g_{k+1}^b\Vert^2_{\bp+\frac{\gamma}{\omega}\bp}$.
\begin{align*}
	\frac{1}{2}\Vert&\vv_{k+1}+\frac{1}{\omega}\g_{k+1}^b\Vert^2_{\bp+\frac{\gamma}{\omega}\bp}\\
	&=\frac{1}{2}\Vert\vv_k\!+\!\frac{1}{\omega}\g_k^b\!+\!\nvk\!+\!\eta\omega\bl\hx_k\!+\!\frac{1}{\omega}(\g_{k+1}^b-\g_k^b)\Vert^2_{\bp+\frac{\gamma}{\omega}\bp}\\
	&\leq \frac{1}{2}\Vert\vv_k+\frac{1}{\omega}\g_k^b\Vert^2_{\bp+\frac{\gamma}{\omega}\bp}+\eta(\gamma+\omega)\hx_k^\top\bk\left(\vv_k+\frac{1}{\omega}\g_k^b\right)\\
	&~~~+\nvk^\top(\bp+\frac{\gamma}{\omega}\bp)\left(\vv_k+\frac{1}{\omega}\g_k^b\right)\\
	&~~~+\frac{1}{\omega}(\g_{k+1}^b-\g_k^b)^\top(\bp+\frac{\gamma}{\omega}\bp)\left(\vv_k+\frac{1}{\omega}\g_k^b\right)\\
	&~~~+\eta\hx_k^\top(\bk+\frac{\gamma}{\omega}\bk)(\g_{k+1}^b-\g_k^b)+\frac{3}{2}\Vert\nvk\Vert^2_{\bp+\frac{\gamma}{\omega}\bp}\\
	&~~~+\Vert\hx_k\Vert^2_{\eta^2\omega(\gamma+\omega)\bl}+\frac{1}{\omega^2}\Vert\g_{k+1}^b-\g_k^b\Vert^2_{\bp+\frac{\gamma}{\omega}\bp}\\
	&\leq \frac{1}{2}\Vert\vv_k+\frac{1}{\omega}\g_k^b\Vert^2_{\bp+\frac{\gamma}{\omega}\bp}+\eta(\gamma+\omega)\hx_k^\top\bk\left(\vv_k+\frac{1}{\omega}\g_k^b\right)\\
	&~~~+\frac{1}{2\eta}\Vert\nvk\Vert^2_{\bp+\frac{\gamma}{\omega}\bp}+\frac{\eta}{2}\Vert\vv_k+\frac{1}{\omega}\g_k^b\Vert^2_{\bp+\frac{\gamma}{\omega}\bp}\\
	&~~~+\frac{1}{2\eta\omega}\Vert\g_{k+1}^b-\g_k^b\Vert^2_{\bp+\frac{\gamma}{\omega}\bp}+\frac{\eta}{2\omega}\Vert\vv_k+\frac{1}{\omega}\g_k^b\Vert^2_{\bp+\frac{\gamma}{\omega}\bp}\\
	&~~~+\Vert\hx_k\Vert_{\frac{\eta^2}{2}\bk}^2\!+\!\frac{1}{2}\Vert\g_{k+1}^b-\g_k^b\Vert^2\!+\!\frac{\eta\gamma}{\omega}\hx_k^\top\bk(\g_{k+1}^b-\g_k^b)\\
	&~~~+\frac{3}{2}\Vert\nvk\Vert^2_{\bp+\frac{\gamma}{\omega}\bp}+\Vert\hx_k\Vert^2_{\eta^2\omega(\gamma+\omega)\bl}\\
	&~~~+\frac{1}{\omega^2}\Vert\g_{k+1}^b-\g_k^b\Vert^2_{\bp+\frac{\gamma}{\omega}\bp}\\
	&=\frac{1}{2}\Vert\vv_k+\frac{1}{\omega}\g_k^b\Vert^2_{\bp+\frac{\gamma}{\omega}\bp}+\eta(\gamma+\omega)\hx_k^\top\bk\left(\vv_k+\frac{1}{\omega}\g_k^b\right)\\
	&~~~+\Vert\hx_k\Vert^2_{\eta^2\omega(\gamma+\omega)\bl+\frac{\eta^2}{2}\bk}+\Vert\vv_k+\frac{1}{\omega}\g_k^b\Vert^2_{(\frac{\eta}{2}+\frac{\eta}{2\omega})(\bp+\frac{\gamma}{\omega}\bp)}\\
	&~~~+\Vert\g_{k+1}^b-\g_k^b\Vert^2_{(\frac{1}{\omega^2}+\frac{1}{2\eta\omega})(\bp+\frac{\gamma}{\omega}\bp)+\frac{1}{2}\bi}\\
	&~~~+\frac{\eta\gamma}{\omega}\hx_k^\top\bk(\g_{k+1}^b-\g_k^b)+(\frac{1}{2\eta}+\frac{3}{2})\Vert\nvk\Vert^2_{\bp+\frac{\gamma}{\omega}\bp}\\
	&\leq\frac{1}{2}\Vert\vv_k+\frac{1}{\omega}\g_k^b\Vert^2_{\bp+\frac{\gamma}{\omega}\bp}+\eta(\gamma+\omega)\hx_k^\top\bk\left(\vv_k+\frac{1}{\omega}\g_k^b\right)\\
	&~~~+\Vert\hx_k\Vert^2_{\eta^2\omega(\gamma+\omega)\bl+\frac{\eta^2}{2}\bk}+\Vert\vv_k+\frac{1}{\omega}\g_k^b\Vert^2_{(\frac{\eta}{2}+\frac{\eta}{2\omega})(\bp+\frac{\gamma}{\omega}\bp)}\\
	&~~~+2\tilde{b}_1\eta^2L_f^2\Vert\bg_k\Vert^2+4\tilde{b}_1L_f^2\Vert\nbxk\Vert^2\\
	&~~~+4\tilde{b}_1\eta^2\omega^2L_f^2\Vert\sum_{t=0}^k\mathbf{\bar{\xi}}_{v,t}\Vert^2+\frac{\eta\gamma}{\omega}\hx_k^\top\bk(\g_{k+1}^b-\g_k^b)\\
	&~~~+(\frac{1}{2\eta}+\frac{3}{2})\Vert\nvk\Vert^2_{\bp+\frac{\gamma}{\omega}\bp},\addtag\label{eq:upperboundofv1}
\end{align*}
where $\tilde{b}_1=(\frac{1}{\omega^2}+\frac{1}{2\eta\omega})(1+\frac{\gamma}{\omega})\frac{1}{\underline{\lambda}_L}+\frac{1}{2}$; the first equality comes from~\eqref{eq:iterationv3}; the first inequality comes from~\eqref{eq:inequality2} and~\eqref{eq:propertyofp}; the second inequality holds due to~\eqref{eq:inequality1}, \eqref{eq:inequality2}, and~\eqref{eq:propertyofp}; the last inequality comes from~\eqref{eq:inequality2},~\eqref{eq:propertyofp1}, and~\eqref{eq:propertyofbg3}. 

We then provide the upper bound of~$\x_{k+1}^\top\bk\bp(\vv_{k+1}+\frac{1}{\omega}\g_{k+1}^b)$
\begin{align*}	\x_{k+1}^\top&\bk\bp(\vv_{k+1}+\frac{1}{\omega}\g_{k+1}^b)\\
	&=(\x_k+\nxk-\eta(\gamma\bl\hx_k+\omega\vv_k+\g_k^b+\g_k-\g_k^b))^\top\\
	&~~~\bk\bp(\vv_k+\nvk+\frac{1}{\omega}\g_k^b+\eta\omega\bl\hx_k+\frac{1}{\omega}(\g_{k+1}^b-\g_k^b))\\
	&=(\x_k-\eta(\gamma\bl\hx_k+\omega\vv_k+\g_k^b+\g_k-\g_k^b))^\top\bk\bp(\vv_k\\
	&~~~+\frac{1}{\omega}\g_k^b+\eta\omega\bl\hx_k+\frac{1}{\omega}(\g_{k+1}^b-\g_k^b))\\
	&~~~+\nxk^\top\bk\bp(\vv_k+\nvk+\frac{1}{\omega}\g_k^b+\eta\omega\bl\hx_k\\
	&~~~+\frac{1}{\omega}(\g_{k+1}^b-\g_k^b))+\nvk^\top\bk\bp(\x_k-\eta(\gamma\bl\hx_k+\omega\vv_k\\
	&~~~+\g_k^b+\g_k-\g_k^b))\\
	&\leq(\x_k^\top\bk\bp-\eta(\gamma+\eta\omega^2)\hx_k^\top\bk)(\vv_k+\frac{1}{\omega}\g_k^b)\\
	&~~~+\eta\omega\x_k^\top\bk\hx_k-\Vert\hx_k\Vert^2_{\eta^2\gamma\omega\bl}+(\x_k^\top\bk\bp-\eta\gamma\hx_k^\top\bk)(\\
	&~~~\g_{k+1}^b-\g_k^b)\!-\eta(\omega\vv_k\!+\!\g_k^b+\!\g_k\!-\!\g_k^b\!-\!\sum_{t=0}^{k}\bar{\xi}_{v,t}-\bg_k)^\top\\
	&~~~\bp(\vv_k+\frac{1}{\omega}\g_k^b)-\eta(\vv_k+\frac{1}{\omega}\g_k^b)^\top\bp\bk(\g_{k+1}^b-\g_k^b)\\
	&~~~-\eta(\g_k-\g_k^b)^\top(\eta\omega\bk\hx_k+\frac{1}{\omega}\bk\bp(\g_{k+1}^b-\g_k^b))\\
	&~~~+\nxk^\top\bk\bp(\vv_k+\nvk+\frac{1}{\omega}\g_k^b+\eta\omega\bl\hx_k\\
	&~~~+\frac{1}{\omega}(\g_{k+1}^b-\g_k^b))+\nvk^\top\bp\bk(\x_k-\eta(\gamma\bl\hx_k+\omega\vv_k\\
	&~~~+\g_k^b+\g_k-\g_k^b))\\
	&\leq(\x_k^\top\bk\bp-\eta\gamma\hx_k^\top\bk)(\vv_k+\frac{1}{\omega}\g_k^b)+\Vert\hx_k\Vert_{\frac{\eta^2\omega^2}{2}\bk}^2\\
	&~~~+\Vert\vv_k+\frac{1}{\omega}\g_k^b\Vert_{\frac{\eta^2\omega^2}{2}\bk}^2+\Vert\x_k\Vert_{\frac{\eta\omega}{4}\bk}^2+\Vert\hx_k\Vert_{\eta\omega(\bk-\eta\gamma\bl)}^2\\
	&~~~+\Vert\x_k\Vert_{\frac{\eta}{2}\bk}^2+\Vert\g_{k+1}^b-\g_k^b\Vert^2_{\frac{1}{2\eta\omega^2}\bp^2}\\
	&~~~-\frac{\eta\gamma}{\omega}\hx_k^\top\bk(\g_{k+1}^b-\g_k^b)-\Vert\vv_k+\frac{1}{\omega}\g_k^b\Vert_{\eta\omega\bp}^2\\
	&~~~+\frac{\eta}{4}\Vert\g_k-\g_k^b\Vert^2+\frac{\eta}{4}\Vert\sum_{t=0}^{k}\bar{\xi}_{v,t}\Vert^2+\frac{\eta}{4}\Vert\bg_k\Vert^2\\
	&~~~+\Vert\vv_k+\frac{1}{\omega}\g_k^b\Vert_{3\eta\bp^2}^2+\Vert\vv_k+\frac{1}{\omega}\g_k^b\Vert_{\eta^2\bp^2}^2\\
	&~~~+\frac{1}{4}\Vert\g_{k+1}^b-\g_k^b\Vert^2+\frac{\eta^2}{2}\Vert\g_{k}-\g_k^b\Vert^2+\Vert\hx_k\Vert_{\frac{\eta^2\omega^2}{2}\bk}^2\\
	&~~~+\frac{\eta^2}{2}\Vert\g_{k}-\g_k^b\Vert^2+\Vert\g_{k+1}^b-\g_k^b\Vert_{\frac{1}{2\omega^2}\bp^2}^2\\
	&~~~+\frac{\eta\omega}{4}\Vert\vv_k+\frac{1}{\omega}\g_k^b\Vert_{\bp}^2+\frac{1}{\eta\omega}\Vert\nxk\Vert_{\bp}^2+\Vert\hx_k\Vert_{\frac{\eta^2\omega^2}{2}\bl}^2\\
	&~~~+\Vert\nxk\Vert_{\frac{1}{2}\bk\bp}^2+\Vert\g_{k+1}^b-\g_k^b\Vert^2_{\frac{1}{2\omega^2}\bk\bp^2}+\Vert\nxk\Vert_{\frac{1}{2}\bk}^2\\
	&~~~+\Vert\nvk\Vert_{\frac{1}{2}\bk\bp}^2+\Vert\nxk\Vert_{\frac{1}{2}\bk\bp}^2+\frac{\eta}{2}\Vert\x_k\Vert_{\bk}^2\\
	&~~~+\frac{1}{2\eta}\Vert\nvk\Vert_{\bp^2\bk}^2+\Vert\hx_k\Vert_{\frac{\eta^2\gamma^2}{2}\bl}^2+\Vert\nvk\Vert_{\frac{1}{2}\bp\bk}^2\\
	&~~~+\Vert\vv_k+\frac{1}{\omega}\g_k^b\Vert_{\frac{\eta\omega}{4}\bp\bk}^2+\Vert\nvk\Vert_{\eta\omega\bp\bk}^2+\frac{\eta}{2}\Vert\g_{k}-\g_k^b\Vert^2\\
	&~~~+\Vert\nvk\Vert_{\frac{1}{2\eta}\bp^2\bk^2}^2\\
	&\leq \x_k^\top\bk\bp(\vv_k+\frac{1}{\omega}\g_k^b)-\eta\gamma\hx_k^\top\bk(\vv_k+\frac{1}{\omega}\g_k^b)\\
	&~~~+\Vert\hx\Vert^2_{\eta\omega\bk+\eta^2(\omega^2\bk-\omega\gamma\bl+\frac{\omega^2+\gamma^2}{2}\bl)}+\Vert\x\Vert^2_{\frac{\eta(\omega+4)}{4}\bk}\\
	&~~~+\eta(\frac{3}{4}+\eta)\Vert\g_{k}-\g_k^b\Vert^2+\Vert\g_{k+1}^b-\g_k^b\Vert^2_{\frac{1+2\eta}{2\eta\omega^2}\bp^2+\frac{1}{4}\bi_{nd}}\\
	&~~~-\frac{\eta\gamma}{\omega}\hx_k^\top\bk(\g_{k+1}^b-\g_k^b)+\frac{\eta}{4}\Vert\bg_k\Vert^2\\
	&~~~-\Vert\vv_k+\frac{1}{\omega}\g_k^b\Vert^2_{\frac{\eta\omega}{2}\bp-3\eta\bp^2-\eta^2\bp^2-\frac{\eta^2\omega^2}{2}\bk}\\
	&~~~+\Vert\nxk\Vert^2_{(\frac{1}{\eta\omega}+\frac{1}{2})\bp+\frac{1}{2}\bk}+\Vert\nvk\Vert^2_{\frac{1}{\eta}\bp^2+\bp+\eta\omega\bp}\\
	&~~~+\frac{\eta}{4}\Vert\sum_{t=0}^{k}\bar{\xi}_{v,t}\Vert^2\\
	&\leq \x_k^\top\bk\bp(\vv_k+\frac{1}{\omega}\g_k^b)-\eta\gamma\hx_k^\top\bk(\vv_k+\frac{1}{\omega}\g_k^b)\\
	&~~~+\Vert\hx\Vert^2_{\eta\omega\bk+\eta^2(\omega^2\bk-\omega\gamma\bl+\frac{\omega^2+\gamma^2}{2}\bl)}\\
	&~~~+\Vert\x\Vert^2_{\frac{\eta(\omega+4)}{4}\bk+\eta(\frac{3}{4}+\eta)L_f^2\bk}+(2\tilde{b}_2 \eta^2L_f^2+\frac{\eta}{4})\Vert\bg_k\Vert^2\\
	&~~~-\frac{\eta\gamma}{\omega}\hx_k^\top\bk(\g_{k+1}^b-\g_k^b)\\
	&~~~-\Vert\vv_k+\frac{1}{\omega}\g_k^b\Vert^2_{\eta(\frac{\omega}{2}-\frac{3}{\underline{\lambda}_L})\bp-\eta^2(\frac{1}{\underline{\lambda}_L}+\frac{\omega^2\bar{\lambda}_L}{2})\bp}\\
	&~~~+\Vert\nxk\Vert^2_{(\frac{1}{\eta\omega}+\frac{1}{2})\bp+\frac{1}{2}\bk+4\tilde{b}_2L_f^2\bi_{nd}}\!+\!\Vert\nvk\Vert^2_{\frac{1}{\eta}\bp^2+\bp+\eta\omega\bp}\\
	&~~~+(\frac{\eta}{4}+4\tilde{b}_2\eta^2\omega^2L_f^2)\Vert\sum_{t=0}^{k}\bar{\xi}_{v,t}\Vert^2,\addtag\label{eq:upperboundofxp1}
\end{align*}
where the first and second equalities comes from~\eqref{eq:iterationxp3} and~\eqref{eq:iterationv3}; the first inequality comes from~\eqref{eq:inequality1} and~\eqref{eq:propertyofp} and the fact that $\bk=\bi-\bh$; the second and third inequalities holds due to~\eqref{eq:inequality1}, \eqref{eq:inequality2},~\eqref{eq:propertyofp}, and~$\lb_\bk=1$; the last inequality comes from~\eqref{eq:propertyofp1},~\eqref{eq:propertyofbg1}, and~\eqref{eq:propertyofbg3}; $\tilde{b}_2=\frac{1+2\eta}{2\eta\omega^2\underline{\lambda}_L^2}+\frac{1}{4}$. 

We then provide the upper bound of $n(f(\bar{x}_{k+1}-f^*))$.
\begin{align*}
	n(f(\bar{x}_{k+1})&-f^*)=\tilde{f}(\bx_k)-nf^*+\tilde{f}(\bx_{k+1})-\tilde{f}(\bx_k)\\
	&\leq \tilde{f}(\bx_k)-nf^*-\eta(\bg_k+\omega\sum_{t=0}^{k}\bar{\xi}_{v,t}-\frac{\nbxk}{\eta})^\top\bg_k^b\\
	&~~~+\frac{L_f}{2}\Vert\nbxk-\eta\bg_k-\eta\omega\sum_{t=0}^{k}\bar{\xi}_{v,t}\Vert^2\\
	&\leq\tilde{f}(\bx_k)-nf^*-\frac{\eta}{2}\Vert\bg_k\Vert^2-\frac{\eta}{2}\Vert\bg_k^b\Vert^2\\
	&~~~+\frac{\eta}{2}\Vert\bg_k-\bg_k^b\Vert^2-\eta(\omega\sum_{t=0}^{k}\bar{\xi}_{v,t}-\frac{\nbxk}{\eta})^\top\bg_k^b\\
	&~~~+\eta^2L_f\Vert\bg_k\Vert^2+2\eta^2L_f\omega^2\Vert\sum_{t=0}^{k}\bar{\xi}_{v,t}\Vert^2\\
	&~~~+2L_f\Vert\nxk\Vert^2\\
	&\leq\tilde{f}(\bx_k)-nf^*-\frac{\eta}{2}(1-2\eta L_f)\Vert\bg_k\Vert^2-\frac{\eta}{4}\Vert\bg_k^b\Vert^2\\
	&~~~+\Vert\x_k\Vert^2_{\frac{\eta}{2}L_f^2\bk}+2\eta^2\omega^2(\frac{1}{\eta}+L_f)\Vert\sum_{t=0}^{k}\bar{\xi}_{v,t}\Vert^2\\
	&~~~+2(\frac{1}{\eta}+L_f)\Vert\nxk\Vert^2,\addtag\label{eq:upperboundofax1}
\end{align*}
where the first inequality comes from~\eqref{eq:propertyofbx}, Assumption~\ref{as:smooth}, and the fact that $\bh=\bh\bh$; the second and third inequalities hold due to~\eqref{eq:inequality2}; the last inequality comes from~\eqref{eq:inequality1},~\eqref{eq:inequality2}, and~\eqref{eq:propertyofbg2}.
\begin{align*}
\Vert\x_{k+1}+&\nxkk-\x^c_{k+1}\Vert^2\\
&=\Vert\x_{k+1}+\nxkk-\x_k-\nxk+\x_k+\nxk\\
&~~~-\x_{k}^c-\alpha_xr\frac{C}{r}(\x_k+\nxk-\x_{k}^c)\Vert^2\\
&\leq (1+s)(\alpha_xr(1-\varphi)+(1-\alpha_xr))\Vert\x_k\\
&~~~+\nxk-\x_{k}^c\Vert^2\\
&~~~+(1+\frac{1}{s})\Vert\x_{k+1}+\nxkk-\x_k-\nxk\Vert^2\\
&\leq (1-\varphi_2-\frac{\varphi_2^2}{2})\Vert\x_k+\nxk-\x_{k}^c\Vert^2\\
&~~~+(1+\frac{2}{\varphi_2})\Vert\x_{k+1}+\nxkk-\x_k-\nxk\Vert^2,\addtag\label{eq:upperboundofc1}
\end{align*}
where the first equality comes from~\eqref{citerationx},~\eqref{citerationxc}; the first inequality comes from~\eqref{eq:inequality1}; the second inequality follows by denoting $\varphi_2=\alpha_xr\varphi$, choosing $s=\frac{\varphi_2}{2}$, and $\alpha_xr<1$. We have
\begin{align*}
	\Vert\x_{k+1}&+\nxkk-\x_k-\nxk\Vert^2\\
	&=\Vert\eta(\gamma\bl(\hx_k-\x_k-\nxk)+\gamma\bl\x_k+\omega\vv_k+\g_k^b\\
	&~~~+\g_k-\g_k^b)+\nxkk+(\eta\gamma\bl-\bi)\nxk\Vert^2\\
	&\leq 6\eta^2(\Vert\gamma\bl(\hx_k-\x_k-\nxk)\Vert^2+\Vert\omega\vv_k+\g_k^b\Vert^2\\
	&~~~+\Vert\gamma\bl\x_k\Vert^2+\Vert\g_k-\g_k^b\Vert^2)+6\Vert\nxkk\Vert^2\\
	&~~~+6\Vert(\eta\gamma\bl-\bi)\nxk\Vert^2\\
	&\leq 6\eta^2(\gamma^2\bar{\lambda}_L^2r_0\Vert\x^c_k-\x_k-\nxk\Vert^2\!+\!\Vert\vv_k\!+\!\frac{1}{\omega}\g_k^b\Vert^2_{\omega^2\underline{\lambda}_L\bp}\\
	&~~~+\Vert\x_k\Vert^2_{(\gamma^2\bar{\lambda}_L^2+L_f^2)\bk})+6\Vert\nxkk\Vert^2\\
	&~~~+6\bar{\lambda}^2_{\eta\gamma\bl-\bi}\Vert\nxk\Vert^2,\addtag\label{eq:upperboundofc2}
\end{align*}
where the first equality holds due to~\eqref{eq:iterationxp3}; the first inequality holds due to Jensen's inequality; the last inequality holds due to~\eqref{eq:propertyofcompressors1},~\eqref{citerationx},~\eqref{eq:propertyofk1},~\eqref{eq:propertyofp1}, and~\eqref{eq:propertyofbg2}. Combining~\eqref{eq:upperboundofc1}--\eqref{eq:upperboundofc2}, one obtains that
\begin{align*}
	\Vert\x_{k+1}+&\nxkk-\x^c_{k+1}\Vert^2\\
	&\leq (1-\frac{\varphi_2}{2}-\frac{\varphi_2^2}{2}+6\eta^2\gamma^2\bar{\lambda}_L^2r_0(1+\frac{2}{\varphi_2}))\Vert\x_k\\
	&~~~+\nxk-\x_{k}^c\Vert^2+\Vert\x_k\Vert^2_{6\eta^2(1+\frac{2}{\varphi_2})(\gamma^2\bar{\lambda}_L^2+L_f^2)\bk}\\
	&~~~+\Vert\vv_k+\frac{1}{\omega}\g_k^b\Vert^2_{6\eta^2(1+\frac{2}{\varphi_2})\omega^2\underline{\lambda}_L\bp}\\
	&~~~+6(1+\frac{2}{\varphi_2})\Vert\nxkk\Vert^2\\
	&~~~+6\bar{\lambda}^2_{\eta\gamma\bl-\bi}(1+\frac{2}{\varphi_2})\Vert\nxk\Vert^2,\addtag\label{eq:upperboundofc3}
\end{align*}
From~$\bar{\lambda}_{\bk}=1$,~\eqref{eq:propertyofp1},~\eqref{eq:upperboundofx1}--\eqref{eq:upperboundofax1}, and~\eqref{eq:upperboundofc3}, we have
\begin{align*}
	\widetilde{V} _{k+1}&\leq\widetilde{V} _{k}-\Vert\x_k\Vert^2_{B_1\eta}+\Vert\hx_k\Vert^2_{B_2\eta}\\
	&~~~-\Vert\vv_k+\frac{1}{\omega}\g_k^b\Vert^2_{(\kappa_3-\kappa_4\eta)\eta\bp}-(\kappa_5-\kappa_6\eta)\eta \Vert\bg_k\Vert^2\\
	&~~~-\tilde{b}_3\Vert\x_{k}+\nxk-\x^c_{k}\Vert^2-\frac{\eta}{4}\Vert\bg_k^b\Vert^2+\tilde{b}_4\Vert\nxk\Vert^2\\
	&~~~+\kappa_{11}\Vert\nvk\Vert^2+\kappa_{12}\Vert\sum_{t=0}^{k}\bar{\xi}_{v,t}\Vert^2,\addtag\label{eq:lyapunov}
\end{align*}
where
\begin{align*}
	&B_1=\frac{\gamma\bl}{2}-\frac{1}{4}(7+9L_f^2+\omega)\bk-\eta(\frac{5L_f^2}{2}\\
	&~~~~~~+6(1+\frac{2}{\varphi_2})(\gamma^2\bar{\lambda}_L^2+L_f^2)\bk),\\
	&B_2=\omega\bk+\eta(2\gamma^2\bl^2+\frac{3\omega^2+\gamma^2}{2}\bl+(\omega^2+\frac{1}{2})\bk),\\
	&\tilde{b}_3=\frac{\varphi_2}{2}+\frac{\varphi_2^2}{2}-6\eta^2\gamma^2\bar{\lambda}_L^2r_0(1+\frac{2}{\varphi_2})-\frac{\eta}{2}(\gamma+4\omega)\bar{\lambda}_L,\\
	&\tilde{b}_4=4(\frac{L_f^2}{\omega^2\underline{\lambda}_L}+\frac{\gamma L_f^2}{\omega^3\underline{\lambda}_L}+\frac{L_f^2}{2\eta\omega\underline{\lambda}_L}+\frac{\gamma L_f^2}{2\eta\omega^2\underline{\lambda}_L}\\
	&~~~~~~+\frac{(1+2\eta)L_f^2}{2\eta\omega^2\underline{\lambda}_L^2}+\frac{3L_f^2}{4}+\frac{1}{2\eta}+\frac{L_f}{2})+\frac{\bar{\lambda}_{\bi_{nd}+\eta\gamma\bl}^2}{\eta}\\
	&~~~~~~+10+\frac{\eta}{2}+2\eta\omega\bar{\lambda}_L+(\frac{1}{\eta\omega}+\frac{1}{2})\frac{1}{\underline{\lambda}_L}+\frac{12}{\alpha_x r\varphi}\\
	&~~~~~~+6\bar{\lambda}^2_{\eta\gamma\bl-\bi}(1+\frac{2}{\varphi_2}).
\end{align*}
Since Jensen's inequality and~$\bar{\lambda}_{\bk}=1$, it holds that
\begin{align*}
	\Vert\hx_k\Vert^2_{\bk}&=\Vert\hx_k-\x_k-\nxk+\x_k+\nxk\Vert^2_{\bk}\\
	&\leq 3r_0\Vert\x_k+\nxk-\x^c_k\Vert^2+3\Vert\x_k\Vert^2_{\bk}+3\Vert\nxk\Vert^2.\addtag\label{eq:propertyofhx}
\end{align*}
Combining~\eqref{eq:lyapunov}--\eqref{eq:propertyofhx}, we have~\eqref{eq:lyapunov1}. Then the proof is completed.

\subsection{The proof of Theorem~\ref{theo:convergence1}}
For simplicity of the proof, we also denote some notations and a useful auxiliary function
\begin{align*}
&\tilde{\zeta}_1\geq\max\{\frac{13+\tilde{\zeta}_4}{2\underline{\lambda}_L},1\},~\tilde{\zeta}_2=\max\{\tilde{\zeta}_4,\tilde{\zeta}_5\},\\
&\tilde{\zeta}_3=\min\{\frac{\kappa_1}{\kappa_2},\frac{\kappa_3}{\kappa_4},\frac{\kappa_5}{\kappa_6},\frac{\sqrt{\kappa_8^2+4\kappa_7\kappa_9}-\kappa_8}{2\kappa_9}\},\\
&\tilde{\zeta}_4=\frac{6}{\underline{\lambda}_L}+1+\tilde{\zeta}_1+2\sqrt{(\frac{3}{\underline{\lambda}_L}+\frac{1}{2}(1+\tilde{\zeta}_1))^2+\frac{1}{2\omega}(1+\tilde{\zeta}_1)},\\
&\tilde{\zeta}_5=2\frac{L_f^2+\tilde{\zeta}_1L_f^2}{\underline{\lambda}_L}+2\sqrt{(\frac{L_f^2+\tilde{\zeta}_1L_f^2}{\underline{\lambda}_L})^2+\frac{L_f^2}{\omega^2\underline{\lambda}_L^2}},\\
&\kappa_{13}=\frac{\gamma \underline{\lambda}_L-\omega}{2 \gamma \underline{\lambda}_L}\\
&\bar{\kappa}_3=\max\{\frac{4}{\eta},\frac{1}{(\kappa_1-\kappa_2\eta)\eta}\},\\
&M_2=\frac{1}{n}\mathbb{E}[\widetilde{V}_{0}]+(\kappa_{10}+\kappa_{11})\sum_{k=0}^{\infty}(2d\bar{q}^{2k}\bar{s}_\xi^2),\\
&\widetilde{U}_k=\Vert\x_{k}\Vert_\bk^2+\Vert\vv_{k}+\frac{1}{\omega}\g_{k}^b\Vert^2_{\bp}+\Vert\x_{k}+\nxk-\x^c_{k}\Vert^2\\
	&~~~~~~+n(f(\bar{x}_{k})-f^*).
\end{align*}
From~\eqref{eq:inequality1}, we have
\begin{align*}
  \widetilde{V}_k&\geq \frac{1}{2}\Vert\x_{k}\Vert_\bk^2+\frac{1}{2}(1+\frac{\gamma}{\omega})\Vert\vv_{k}+\frac{1}{\omega}\g_{k}^b\Vert^2_{\bp}-\frac{\omega}{2\gamma\underline{\lambda}_L}\Vert\x_{k}\Vert_\bk^2\\
  &~~~-\frac{\gamma}{2\omega}\Vert\vv_{k}+\frac{1}{\omega}\g_{k}^b\Vert^2_{\bp}+\Vert\x_{k}+\nxk-\x^c_{k}\Vert^2\\
  &~~~+n(f(\bar{x}_{k})-f^*)\\
  &\geq \kappa_{13}\widetilde{U}_k\geq0,\addtag\label{eq:relationbuv1}
\end{align*}
We then verify $\kappa_1-\kappa_2\eta,~\kappa_3-\kappa_4\eta$, $\kappa_5-\kappa_6\eta$, $\kappa_7-\kappa_8\eta-\kappa_9\eta^2$ are positive. Since $\gamma=\tilde{\zeta}_1\omega$, $\tilde{\zeta}_1\geq\frac{13+\tilde{\zeta}_4}{2\underline{\lambda}_L}$, $\tilde{\zeta}_4>0$, and $\omega>\tilde{\zeta}_2>\frac{7+9L_f^2}{\tilde{\zeta}_4}$, it holds that
\begin{align*}
	\kappa_1>\frac{\tilde{\zeta}_1\omega\underline{\lambda}_L}{2}-\frac{1}{4}(\tilde{\zeta}_2\omega+13\omega)>0.
\end{align*}
From~$\gamma=\tilde{\zeta}_1\omega$,~$\tilde{\zeta}_2>\tilde{\zeta}_4$ and~$\omega>0$, we have
\begin{align*}
	\kappa_3=\frac{\omega}{4}-\frac{3}{\underline{\lambda}_L}-\frac{1}{2}(1+\tilde{\zeta}_1)-\frac{1}{2\omega}(1+\tilde{\zeta}_1)>0.
\end{align*}
From~$\gamma=\tilde{\zeta}_1\omega$,~$\tilde{\zeta}_2>\tilde{\zeta}_5$ and~$\omega>0$, we have
\begin{align*}
	\kappa_5=\frac{1}{4}-\frac{1}{\omega}\frac{L_f^2+\tilde{\zeta}_1L_f^2}{\underline{\lambda}_L}-\frac{L_f^2}{\omega^2\underline{\lambda}_L^2}>0.
\end{align*}
From~$0<\eta<\tilde{\zeta}_3$, we can verify $\kappa_1-\kappa_2\eta,~\kappa_3-\kappa_4\eta$, $\kappa_5-\kappa_6\eta$, $\kappa_7-\kappa_8\eta-\kappa_9\eta^2$ are positive. From Lemma~\ref{lemma:lyapunov}, we have
\begin{align*}
	\widetilde{V} _{k+1}&\leq\widetilde{V}_{k}-\Vert\x_k\Vert^2_{(\kappa_1-\kappa_2\eta)\eta\bk}-\frac{\eta}{4}\Vert\bg_k^b\Vert^2+\kappa_{10}\Vert\nxk\Vert^2\\
&~~~+\kappa_{11}\Vert\nvk\Vert^2+\kappa_{12}\Vert\sum_{t=0}^{k}\bar{\xi}_{v,t}\Vert^2.
\end{align*}
Then, one obtains that
\begin{align*}
\sum_{k=0}^T\big((\kappa_1-\kappa_2\eta)\eta\Vert\x_k\Vert^2_{\bk}&+\frac{\eta}{4}\Vert\bg_k^b\Vert^2\big)\leq \widetilde{V}_{0}+\sum_{k=0}^T\big(\kappa_{10}\Vert\nxk\Vert^2\\
&~~~+\kappa_{11}\Vert\nvk\Vert^2+\kappa_{12}\Vert\sum_{t=0}^{k}\bar{\xi}_{v,t}\Vert^2\big),
\end{align*}
which can be rewritten as 
\begin{align*}
    \frac{1}{T}\sum_{k=0}^T\big(\mathbb{E}\Vert\x_k-\bx_k\Vert^2&+\mathbb{E}\Vert \nabla f(\bar{x}_k\Vert^2\big)\leq \frac{\bar{\kappa}_3M_2}{T}\\
&+\frac{\bar{\kappa}_3\kappa_{12}}{n}\mathbb{E}\Vert\sum_{t=0}^{\infty}\bar{\xi}_{v,t}\Vert^2.
\end{align*}


\section{The proof of Theorem~\ref{theo:convergence2}}\label{app-convergence2}
In this proof, in addition to the notations used in the proof of Theorem~\ref{theo:convergence1}, we also denote
\begin{align*}
    &\kappa_{14}=\max\{\frac{1}{2}+\frac{\gamma}{\omega},\frac{\gamma\underline{\lambda}_L+\omega}{2\gamma\underline{\lambda}_L}\},\\
&\kappa_{15}=\eta\min\{\kappa_1-\kappa_2\eta,~\kappa_3-\kappa_4\eta,\frac{\nu}{2},\frac{\kappa_7}{\eta}-\kappa_8-\kappa_9\eta\}\\
&0<\bar{\kappa}_4<\min\{\frac{\kappa_{15}}{\kappa_{14}},1-\bar{q}^2\}.
\end{align*}

From Lemma~\ref{lemma:lyapunov} and Assumption~\ref{as:PLcondition}, one obtains that
\begin{align*}
	\widetilde{V} _{k+1}&\leq\widetilde{V} _{k}-\Vert\x_k\Vert^2_{(\kappa_1-\kappa_2\eta)\eta\bk}-\Vert\vv_k+\frac{1}{\omega}\g_k^b\Vert^2_{(\kappa_3-\kappa_4\eta)\eta\bp}\\
	&~~~-(\kappa_5-\kappa_6\eta)\eta \Vert\bg_k\Vert^2-(\kappa_7-\kappa_8\eta-\kappa_9^2\eta^2)\Vert\x_{k}\\
	&~~~+\nxk-\x^c_{k}\Vert^2-\frac{\eta\nu n}{2}(f(\bar{x}_{k})-f^*)+\kappa_{10}\Vert\nxk\Vert^2\\
	&~~~+\kappa_{11}\Vert\nvk\Vert^2+\kappa_{12}\Vert\sum_{t=0}^{k}\bar{\xi}_{v,t}\Vert^2.\addtag\label{eq:lyapunov2}
\end{align*}
From~\eqref{eq:inequality1}, we have
\begin{align*}
  \widetilde{V}_k&\leq \frac{1}{2}\Vert\x_{k}\Vert_\bk^2+\frac{1}{2}(1+\frac{\gamma}{\omega})\Vert\vv_{k}+\frac{1}{\omega}\g_{k}^b\Vert^2_{\bp}+\frac{\omega}{2\gamma\underline{\lambda}_L}\Vert\x_{k}\Vert_\bk^2\\
  &~~~+\frac{\gamma}{2\omega}\Vert\vv_{k}+\frac{1}{\omega}\g_{k}^b\Vert^2_{\bp}+\Vert\x_{k}+\nxk-\x^c_{k}\Vert^2\\
  &~~~+n(f(\bar{x}_{k})-f^*)\\
  &\leq \kappa_{14}\widetilde{U}_k,\addtag\label{eq:relationbuv}
\end{align*}
Combining~\eqref{eq:lyapunov2} and~\eqref{eq:relationbuv}, we have
\begin{align*}
	\E[\widetilde{V} _{k+1}]&\leq\E[\widetilde{V} _{k}]-\frac{\kappa_{15}}{\kappa_{14}}\E[\widetilde{V} _{k}]+\kappa_{10}\E[\Vert\nxk\Vert^2]+\kappa_{11}\E[\Vert\nvk\Vert^2]\\
	&~~~+\kappa_{12}\E[\Vert\sum_{t=0}^{k}\bar{\xi}_{v,t}\Vert^2]\\
	&\leq(1-\bar{\kappa}_4)^{k+1}\E[\widetilde{V}_0]\\
 &+(\kappa_{10}+\kappa_{11})n^2\bar{s_\xi}^2\sum_{t=0}^k(1-\bar{\kappa}_4)^{k-t}\bar{q}^{2t}\\
	&~~~+\kappa_{12}\sum_{t=0}^k(1-\bar{\kappa}_4)^{k-t}\E[\Vert\sum_{m=0}^{t}\bar{\xi}_{v,m}\Vert^2]\\
	&\leq(1-\bar{\kappa}_4)^{k+1}(\E[\widetilde{V}_0]+\frac{(\kappa_{10}+\kappa_{11})n^2\bar{s_\xi}^2}{1-\bar{\kappa}_4-\bar{q}^2})\\
    &~~~+\kappa_{12}\frac{1}{\bar{\kappa}}\E[\Vert\sum_{k=0}^{\infty}\xi_{v,k}\Vert^2].\addtag\label{eq:relationbuv2}
\end{align*}
Noting that $0<\bar{\kappa}_4<1$ since $\bar{q}<1$ and $\frac{\kappa_{15}}{\kappa_{14}}\leq\frac{2\kappa_{7}}{3}=\frac{\alpha_xr\varphi+(\alpha_xr\varphi)^2}{3}<1$. Combining~\eqref{eq:relationbuv1} and~\eqref{eq:relationbuv2}, we complete the proof.

\section{The proof of Theorem~\ref{theo:privacy1}}\label{app-privacy1}
Similar to the proof of Theorem~\ref{theo:privacy}, we know that the Theorem~\ref{theo:privacy1} can be proved if the following inequality holds for any observation $\mathcal{H}\subseteq\text{Range}(C)$ and any pair of adjacent cost function sets $\mathcal{S}^{(1)}$ and $\mathcal{S}^{(2)}$,
\begin{align*}
P\{(\xi_{x},\xi_{v},&\mathbf{\varrho})\in\Psi|Z_{\mathcal{F}^{(1)}}(\xi_{x},\xi_{v},\mathbf{\varrho})\in\mathcal{H}\}\\
&\leq e^\epsilon P\{(\xi_{x},\xi_{v},\mathbf{\varrho})\in\Psi|Z_{\mathcal{F}^{(2)}}(\xi_{x},\xi_{v},\mathbf{\varrho})\in\mathcal{H}\},
\end{align*}
where $\mathcal{F}^{(l)}\!\!=\!\!\{\x(0),W,\mathcal{S}^{(l)}\!\}$, $l\!=\!1,2$, and $\Psi$ denotes the sample space. Then it is indispensable to guarantee $Z_{\mathcal{F}^{(1)}}(\xi_{x},\xi_{y},\mathbf{\varrho})=$ $Z_{\mathcal{F}^{(2)}}(\xi_{x},\xi_{y},\mathbf{\varrho})$, i.e.,
\begin{align*}
C(x_{i,k}^{a-c,(1)},\varrho_k)=C(x_{i,k}^{a-c,(2)},\varrho_k),
\end{align*}
for $\forall i\in\mathcal{V}$ and any $k\geq0$, where
\begin{align*}
&x_{i,k}^{a-c,(l)}=x_{i,k}^{a,(l)}-x_{i,k-1}^{c,(l)},~l=1,2.
\end{align*}
Similar to~\eqref{eq:ineqofprivacy}, we have
\begin{align}\label{eq:ineqofprivacy1}
\begin{aligned}
&\frac{P\{(\xi_{x},\xi_{v},\mathbf{\varrho})\in\Psi|Z_{\mathcal{F}^{(1)}}(\xi_{x},\xi_{v},\mathbf{\varrho})\in\mathcal{H}\}}{P\{(\xi_{x},\xi_{v},\mathbf{\varrho})\in\Psi|Z_{\mathcal{F}^{(2)}}(\xi_{x},\xi_{v},\mathbf{\varrho})\in\mathcal{H}\}}\\
&\leq\frac{P\{(\xi_{x},\xi_{v},\mathbf{\varrho})\in\Psi|Z_{\mathcal{F}^{(1)}}(\xi_{x},\xi_{v},\mathbf{\varrho})\in\mathcal{H}\}}{P\{(\xi_{x},\xi_{v},\mathbf{\varrho})\in\Psi|Z_{\mathcal{F}^{(2)}}(\xi_{x},\xi_{v},\mathbf{\varrho})\in\mathcal{H},E_2\}},
\end{aligned}
\end{align}	
where $E_2=\cup_{k=0}^\infty\{x_{i_0,k}^{a,(2)}=x_{i_0,k}^{a,(1)}\}$ is an event. From~\eqref{eq:iterationxp3} and~\eqref{eq:iterationv3}, we have 
\begin{align*}
    &~x_{i_0,k+1}^{(1)}-x_{i_0,k+1}^{(2)}=-\eta(\omega  v_{i_0,k}^{(1)}-\omega  v_{i_0,k}^{(2)}+\nabla f_{i_0}^{(1)}(x_{i_0,k}^{(1)})\\&~~~~~~~~~~~~~~~~~~~~~~~~~~~~~-\nabla f_{i_0}^{(2)}(x_{i_0,k}^{(2)})),\addtag\label{eq:iterationxp4}\\
	&~v_{i_0,k+1}^{(1)}-v_{i_0,k+1}^{(2)}=v_{i_0,k}^{(1)}-v_{i_0,k}^{(2)}+\xi_{v_{i_0},k}^{(1)}-\xi_{v_{i_0},k}^{(2)}.\addtag\label{eq:iterationv4}\\
\end{align*}
We then denote the following map by $\mathcal{B}(\cdot)$, i.e., $(\xi_{x_{i_0}}^{(2)},\xi_{v_{i_0}}^{(2)})=\mathcal{B}(\xi_{x_{i_0}}^{(1)},\xi_{v_{i_0}}^{(1)})$.
\begin{align*}
    &\xi_{x_{i_0},0}^{(2)}=\xi_{x_{i_0},0}^{(1)},\\
    &\xi_{x_{i_0},1}^{(2)}=\xi_{x_{i_0},1}^{(1)}-\eta(\nabla f_{i_0}^{(1)}(x_{i_0,0}^{(1)})-\nabla f_{i_0}^{(2)}(x_{i_0,0}^{(2)})),\\
    &\xi_{x_{i_0},k+1}^{(2)}=\xi_{x_{i_0},k+1}^{(1)}-\eta(\nabla f_{i_0}^{(1)}(x_{i_0,k}^{(1)})-\nabla f_{i_0}^{(2)}(x_{i_0,k}^{(2)})\\
    &~~~~~~~~~~~~-\nabla f_{i_0}^{(1)}(x_{i_0,k-1}^{(1)})+\nabla f_{i_0}^{(2)}(x_{i_0,k-1}^{(2)})), \forall k\geq 1,\\
    &\xi_{v_{i_0},0}^{(2)}=\xi_{v_{i_0},0}^{(1)},\\
    &\xi_{v_{i_0},1}^{(2)}=\xi_{v_{i_0},1}^{(1)}+\frac{1}{\omega}(\nabla f_{i_0}^{(1)}(x_{i_0,0}^{(1)})-\nabla f_{i_0}^{(2)}(x_{i_0,0}^{(2)})),\\
    &\xi_{v_{i_0},k+1}^{(2)}=\xi_{v_{i_0},k+1}^{(1)}+\frac{1}{\omega}(\nabla f_{i_0}^{(1)}(\nabla f_{i_0}^{(1)}(x_{i_0,k}^{(1)})-\nabla f_{i_0}^{(2)}(x_{i_0,k}^{(2)})\\
    &~~~~~~~~~~~~-\nabla f_{i_0}^{(1)}(x_{i_0,k-1}^{(1)})+\nabla f_{i_0}^{(2)}(x_{i_0,k-1}^{(2)})), \forall k\geq 1.
\end{align*}
From~\eqref{eq:iterationxp4},~\eqref{eq:iterationv4}, and~$\mathcal{B}(\cdot)$, it is easy to verify that $x_{i_0,k}^{a,(2)}=x_{i_0,k}^{a,(1)},~\forall k\geq0$ holds. Then combining~\eqref{eq:ineqofprivacy1}, we have
\begin{align}\label{eq:ineqofprivacy2}
\begin{aligned}
&\frac{P\{(\xi_{x},\xi_{v},\mathbf{\varrho})\in\Psi|Z_{\mathcal{F}^{(1)}}(\xi_{x},\xi_{v},\mathbf{\varrho})\in\mathcal{H}\}}{P\{(\xi_{x},\xi_{v},\mathbf{\varrho})\in\Psi|Z_{\mathcal{F}^{(2)}}(\xi_{x},\xi_{v},\mathbf{\varrho})\in\mathcal{H}\}}\\
&\leq\frac{P\{(\xi_{x},\xi_{v},\mathbf{\varrho})\in\Psi|Z_{\mathcal{F}^{(1)}}(\xi_{x},\xi_{v},\mathbf{\varrho})\in\mathcal{H}\}}{P\{(\xi_{x},\xi_{v},\mathbf{\varrho})\in\Psi|Z_{\mathcal{F}^{(2)}}(\mathcal{B}(\xi_{x},\xi_{v}),\mathbf{\varrho})\in\mathcal{H}\},E_2}.
\end{aligned}
\end{align}	
Thus, from~\eqref{eq:ineqofprivacy2}, the proof can be completed in the same way as the proof of Theorem~\ref{theo:privacy}.

\bibliographystyle{IEEEtran}
\bibliography{ref_Antai}

\end{document}